\documentclass[11pt]{article}

\usepackage[mathscr]{eucal}
\usepackage[margin=1in]{geometry}
\usepackage[section]{placeins}
\usepackage{amsrefs}
\usepackage{amsmath}
\usepackage{amsthm}
\usepackage{amsfonts}
\usepackage{amssymb}
\usepackage{hyperref}
\usepackage{enumerate} 
\numberwithin{equation}{section}
\usepackage{enumitem}
\usepackage{comment}
\usepackage{tabularray}
\usepackage{algorithm}
\usepackage{algpseudocode}
\usepackage{subcaption}
\usepackage{graphicx}
\usepackage{bm}

\usepackage{color}
\newcommand{\tbb}{\textcolor{blue}}

\newtheorem{theorem}{Theorem}[section]
\newtheorem{lemma}[theorem]{Lemma}
\newtheorem{proposition}[theorem]{Proposition}
\newtheorem{corollary}[theorem]{Corollary}

\theoremstyle{definition}
\newtheorem{example}[theorem]{Example}
\newtheorem{definition}[theorem]{Definition}
\newtheorem{assumption}[theorem]{Assumption}

\newtheorem{remark}[theorem]{Remark}

\newcommand{\Emb}{{\mathbb{E}}}

\newcommand{\Imb}{{\mathbb{I}}}

\newcommand{\Amc}{{\mathcal{A}}}
\newcommand{\Bmc}{{\mathcal{B}}}

\newcommand{\Dmc}{{\mathcal{D}}}
\newcommand{\Emc}{{\mathcal{E}}}
\newcommand{\Fmc}{{\mathcal{F}}}
\newcommand{\Gmc}{{\mathcal{G}}}
\newcommand{\Hmc}{{\mathcal{H}}}

\newcommand{\Mmc}{{\mathcal{M}}}

\newcommand{\Pmc}{{\mathcal{P}}}

\newcommand{\Smc}{{\mathcal{S}}}
\newcommand{\Tmc}{{\mathcal{T}}}

\newcommand{\Wmc}{{\mathcal{W}}}
\newcommand{\Xmc}{{\mathcal{X}}}
\newcommand{\Ymc}{{\mathcal{Y}}}

\def\R{{\mathbb R}}

\def\N{{\mathbb N}}
\def\PP{{\mathbb P}}

\def\P{{\mathcal P}}

\newcommand{\ps}{\mathscr{P}}

\newcommand{\te}{\text}

\newcommand{\indic}[1]{\Imb_{\left\{#1\right\}}}
\newcommand{\ex}[1]{\Emb\left[#1\right]}
		
\newcommand{\exmu}[2]{\Emb_{#1}\left[#2\right]}	
\newcommand{\deq}{\overset{\text{(d)}}{=}}		
\newcommand{\defeq}{:=}

\newcommand{\borel}{\Bmc}

\newcommand{\ind}{\hspace{24pt}}
\newcommand{\alt}[1]{\widetilde{#1}}
\newcommand{\law}{\te{Law}}

\newcommand{\pcar}[1]{^{(#1)}}

\newcommand{\wsp}{\mathcal{W}}

\newcommand{\mf}{\mathcal{M}}
\newcommand{\incs}{\mathcal{I}}

\allowdisplaybreaks
\title{Large Network limit for Latent Variable Process Models with Endogeneously Evolving Interactions among Multiple Types}

\author{Ankan Ganguly\footnote{Corresponding Author}, and Konstantinos Spiliopoulos\thanks{KS was partially supported by  NSF SES-2120115 and NSF-DMS 2311500.}\\
Department of Mathematics \& Statistics, Boston University\\
665 Commonwealth Avenue, Boston, 02215, MA, United States\footnote{\emph{Email Addresses}: ankan.prob@gmail.com (AG), and kspiliop@bu.edu (KS)}}
\date{}
\begin{document}
\maketitle
\begin{abstract}
Motivated by the opinion and social dynamics literature,  we rigorously study dynamically co-evolving latent space network models, where agent opinions evolve based on a latent state space model influenced by and influencing an underlying dynamically evolving graph determining their interactions.  In contrast to earlier work, see \cite{GanSpiSus25}, agents are allowed to be heterogeneous, i.e., of different types, and their dynamic evolution can capture behaviors like attraction and repulsion between and within groups. In addition, the model encodes bidirectional feedback between the latent process and the underlying interaction network, persistence effects are included, and the interactions are localized, i.e., individual agents do not have global information. We produce weak and empirical limit theorems that encode the asymptotic joint distribution of agent opinion trajectories and their interaction network. In doing so, we describe and exploit the complex dependence structure of the model.

\end{abstract}

\sloppy Keywords: Interacting particle systems, co-evolving networks, opinion dynamics, mean-field, graphons, propagation of chaos\\
MSC Classifications: 60K35, 60J05, 91D30 (Primary), 60B10, 60G57, 62D05 (Secondary)


\section{Introduction}
\label{sec::intro}

\ind In many social systems, interactions influence agents' behavior and beliefs, which, in turn, impact interactions. One can find examples of this behavior across the social science and statistics literature, including opinion dynamics~\cite{friedkin1990social,sznajd2000opinion}, 
herding~\cite{banerjee1992simple} or flocking~\cite{douglis1948social,frey2018cognitive}, and polarization~\cite{DelVicario2017,Matakos2017}. 

\ind In the literature of social networks, this relationship between agent behavior and agent interaction is modeled using so-called co-evolving networks. There, the links between the nodes as well as certain attributes of the nodes evolve over time in ways that affect each other. A non-exhaustive list of model approaches within the statistics and social network analysis literature that attempt to conceptualize the dynamic feature of network models include dynamic stochastic block models (e.g., \cite{yang2011detecting,xu2014dynamic,corneli2016exact,matias2017statistical,zhang2017finding}), dynamic latent space network models (e.g., \cite{sarkar2006dynamic, sewell2015latent, sewell2015analysis, sewell2016latent, sewell2017latent,Loyal2023}), temporal exponential family random graph models (e.g., \cite{hanneke2010discrete,krivitsky2014separable}), stochastic actor-oriented models (SAOM, e.g., \cite{snijders1997simulation,snijders2010introduction,snijders2017modeling}), and the recently proposed dynamic co-evolving latent space network with attractor models (CLSNA, \cite{Zhuetal23,Panetal24, CantayDinoHancongEtall2026}).  The SAOM and CLSNA models are inherently co-evolving.

\ind The mathematical community has made tremendous progress in recent years in developing mathematical approaches to study processes on networks (typically coined as interacting particle systems) that have been proposed across the literature. This includes interacting particle systems in applied mathematics and statistical physics  (e.g., \cite{Dawson,GartnerIPS,Garnier2,MFK2008,SirignanoSpiliopoulos2018NN_LLN,SirignanoSpiliopoulos2019_DNN_LLN}), ecology and theoretical biology (e.g., \cite{Dawson,IssacsonMS,SirignanoSpiliopoulos2018NN_LLN}), and economics and game theory (e.g., \cite{AllenGale2000,FrickeLux2015,GlassermanYoung2016,GSS,GSSS,EisenbergNoe2001,Spiliopoulos2015,SpiliopoulosYang2020}) to name just a few. 

\ind  More specifically, and also related to our work, in \cite{BayWu21}, the authors study mean-field limits for a class of continuous-time, discrete-state, interacting particle systems on endogenous, dynamic random graphs; see also \cite{BAYRAKTAR_WU_2024,BayraktarChakrabortyWu2023} for related works. In \cite{BolPao24}, the authors study an opinion dynamics model defined on dynamic, endogenous networks, using a game-theoretic approach and  describe the long-time behavior of the model for finite network sizes. In \cite{AndOlv24}, the authors study mathematically an opinion dynamics model defined on directed graphs closely related to the model of \cite{friedkin1990social}. In a series of papers \cite{LackerRamananWu_2021,LackerRamananWu_2023, LackerRamananWu_2023_2}, the authors study the convergence properties of the empirical measure of systems of interacting particle systems on sparse random graphs, also providing a description of the limiting system for a large set of sparse random graph structures.

\ind This paper considerably builds upon our previous work \cite{GanSpiSus25} in studying the behavior of dynamic co-evolving systems as both the latent process and the underlying graph structure evolve in time and interact with each other (bidirectional feedback effects) through time. \cite{GanSpiSus25} developed the theory needed to comprehensively describe the limiting behavior of such processes by analyzing a simple latent model with purely attractive behavior. It was shown that the mean-field limit of the latent process alone does not fully describe the limit behavior of the process. Instead, a sampling method was used to find mean-field limits, various conditional sampling limits describing joint distributions of particles given their interaction behavior, and graphon limits describing the underlying network. In this work, we study a notably more complex and more realistic model that is capable of handling heterogeneous particle and interaction types and a greater variety of interaction rules between interacting particles. The infrastructure we develop supports a rich collection of latent process behaviors which also demonstrate a wide variety of different behaviors. The richer dynamical structure of the models studied in this paper yields a number of additional mathematical complexities which we resolve in this paper. Details of the mathematical advances are discussed later on.

\ind As in \cite{GanSpiSus25}, we study the behavior of such systems when they are subject to persistence effects, meaning that the graph structure at the next time instance does not depend only on the corresponding latent process values but also on the graph structure at the current time instance. Essentially, this mirrors the idea that if an edge is present now, it may be more likely to be present in the next time instance as well. As a running example, we keep in mind the CLSNA model class \cite{Zhuetal23} which has all of these characteristics present.

\ind Let us next present in greater detail the model we study. Consider a model for which at each time $t \in \N_0$, we model $n$ agents holding latent opinions represented by the set of vectors $Z^n(t) \defeq (Z^n_i(t))_{i=1}^n \in \R^{d\times n}$. Agents are equipped with dynamic types (denoted by $Y^n_i$(t)) lying in the set $[1:q]$. Interactions between agents at time $t$ are encoded by a dynamic (undirected) network with adjacency matrix $A^n(t) \defeq (A^n_{ij}(t))_{i,j=1}^n$ with $m$ different edge types. For example, if $A^n_{ij}(t) = a$, then the nodes $i$ and $j$ have an edge of type $a$ between them and if $A^n_{ij}(t) = 0$, then there is no edge between the nodes $i$ and $j$. At time $t+1$, agent $i$ updates their opinion by considering a type-dependent linear combination of their own opinion and the average opinion of their neighbors (along each edge type) at time $t$, then perturbing that opinion with i.i.d. additive noise $\{\xi_i(t)\}_{i\in \N,t\in\N_0}$:
\begin{align}
\label{eqn::Znevolve}
Z^n_{i}(t+1) &= \Lambda_0(Y^n_i(t))Z^n_{i}(t) + \sum_{k=1}^m \Lambda_k(Y^n_i(t)) L^{n,k}_i(t) + \xi_i(t),\\
\label{eqn::Lnidef}
L^{n,k}_i(t) &= \frac{1}{d_{A^{n,k}(t)}(i)}\sum_{j=1}^n Z^n_j(t)\indic{A^n_{ij}(t)=k},\\
\label{eqn::dAnt}
d_{A^{n,k}(t)}(i) &= \sum_{j=1}^n \indic{A^n_{ij}(t)=k},
\end{align}
where $\Lambda_0,\dots,\Lambda_m$ are measurable mappings from $[1:q]$ to $\R^{d\times d}$ represent random matrices, $A^{n,k}(t)$ is the adjacency matrix defined by $A^{n,k}_{ij}(t) = \indic{A^n_{ij}(t) = k}$, and where we set $L^{n,k}_i(t) = 0$ in the event that the degree of $i$ in $A^{n,k}(t)$ given by $d_{A^{n,k}(t)}(i)$ is equal to 0.

\ind Let $\Fmc^{Y,n}_t \defeq \sigma(Z^n[t],Y^n[t-1],A^n[t-1])$ and $\Fmc^{A,n}_t\defeq \sigma(Z^n[t],Y^n[t],A^n[t-1])$. Then for each $t \in \N$, we assume $\{Y^n_i(t)\}_{i \in \N}$ are conditionally independent given $\Fmc^{Y,n}_t$. Furthermore, there exists an a.e. continuous function $P: \R^d\times [1:q]^2 \to [0,1]$ such that for each $y \in [1:q]$,
\begin{equation}
\label{eqn::peqn}
\PP\left(Y^n_i(t+1) = y\middle|\Fmc^{Y,n}_t\right) = P(Z^n_i(t+1),Y^n_i(t),y).
\end{equation}

\ind The network $A^n$ also evolves with time. Fix any $t \in \N_0$ and $1\leq i < j \leq n$. At time $t+1$, there exists an edge between agents $i$ and $j$ with probability depending on \textbf{(a)} their latent positions $Z^n_{\{i,j\}}(t+1) \defeq (Z^n_i(t+1),Z^n_j(t+1))$, \textbf{(b)} their node types $Y^n_{\{i,j\}}(t+1) \defeq (Y^n_i(t+1),Y^n_j(t+1))$ and \textbf{(c)} the interaction between the agents $A^n_{ij}(t)$ at time $t$. This probability is determined by an a.e. continuous \emph{interaction function} $B: (\R^d)^2\times [1:q]^2 \times [m]^2\to [0,1]$. That is, for all $a \in [m]$,
\begin{align}
\PP\left(A^n_{ij}(t+1) = a\middle|\Fmc^{A,n}_t\right) &= \PP\left(A^n_{ji}(t+1) = a\middle|\Fmc^{A,n}_t\right)\nonumber\\
& = B\left(Z^n_{\{i,j\}}(t+1),Y^n_{\{i,j\}}(t+1),A^n_{ij}(t),a\right).\label{eqn::Anevolve}
\end{align}
In summary, \eqref{eqn::Anevolve} implies that the only relevant factor determining the type of interaction between two agents at time $t+1$ is the latent positions of the agents at time $t+1$, the types of the agents at time $t+1$ and the network at time $t$. To this end, it is assumed that $\{A^n_{ij}(t+1)\}_{1\leq i < j\leq n}$ are mutually conditionally independent given $\Fmc^{A,n}_t$.

\ind In this work, we study the limit of this dynamical system as it grows bigger in size, i.e., as $n\rightarrow\infty$. We obtain a mean-field convergence result for the latent particle system, Theorem \ref{thm::hydro}, and the convergence of the latent particle network (graphon), Theorem  \ref{thm::Antmultconv} and Corollary \ref{coro::Wdefthm}. To accomplish these goals, we first derive the convergence of the joint distribution of the pair of sample latent positions and the associated network trajectory, explicitly characterizing the limit point, Theorem \ref{thm::Sampleconv}. Understanding and properly analyzing the dependence and correlation structure in this class of models is one of the key technical difficulties of this work.

\ind When $m = q = 1$, $\Lambda_0(\cdot) \equiv\gamma I_d$ for some $0 \leq \gamma \leq 1$ and $\Lambda_1(\cdot) \equiv I_d-\Lambda_0$, the model introduced above is nearly equivalent to the coevolving latent variable process introduced in \cite{GanSpiSus25} with only a few minor changes that vanish in the limit (see Remark \ref{rem::iiis0}). Our work considerably extends the results of that paper to allow for a larger variety of population dynamics models. First, our model introduces heterogeneity in the sense that particles can be sorted into different dynamic types (this may be used to model, for example, political affiliation). In addition, we introduce different dynamic interaction types whose evolution depends on a particle's latent opinions and its type. For example, we may allow some particles to be attracted to each other while others are repelled from each other. Our use of matrices $\Lambda_k$, $k \in [1:m]$ in \eqref{eqn::Znevolve} allow for other, more complex interactions. For instance, if $d = 2$, we may use the latent space to model latent opinions on two political issues. For instance, our model allows us to consider the case when interactions may bring voters closer together on one issue and separate voters in another. 

\ind We built on the results of \cite{GanSpiSus25} to analyze the general multi-type model (\ref{eqn::Znevolve})-(\ref{eqn::Anevolve}). This extension introduces several interesting technical challenges, and it is practically more relevant as well. In both papers, the model exhibits a strong conditional structure which is leveraged to develop law of large numbers type results. However, the introduction of dynamic particle types significantly complicates this conditional structure and the resulting arguments relying on such structure. Likewise, the arguments of both papers require $\{Z^n_i(t)\}_{i \in \N, n \in \N}$ to be uniformly integrable random vectors. In \cite{GanSpiSus25}, we were able to find relatively simple sufficient conditions for this to hold. The analogous sufficient conditions for our model are complex enough to be relegated to a section of their own (Appendix \ref{sec::ui}). 

\ind Of particular note, establishing the limit of the ratio term(s) is significantly more difficult in this work compared to \cite{GanSpiSus25}. In both papers, the ratio term is decomposed as follows:
\begin{equation}
\label{Lnpf}
L^{n,k}_i(t) = \frac{\sum_{j=1}^n Z^n_j(t)\indic{A^n_{ij}(t)=k}}{d_{A^{n,k}(t)}(t)} = \frac{\frac{1}{n}\sum_{j=1}^n Z^n_j(t)\indic{A^n_{ij}(t)=k}}{\frac{1}{n}\sum_{j=1}^n \indic{A^n_{ij}(t)=k}},
\end{equation}
where we then examine the limits of both the numerator and denominator. In \cite{GanSpiSus25}, it is assumed that there is only one interaction type, so in such models we naturally show that each particle $j$ interacts with $\theta(n)$ other particles. This shows that the denominator above has a non-zero limit, so we can simply apply a conditional law of large numbers type argument (given $Z^n_i(t)$) and the continuous mapping theorem to find the limiting behavior of the ratio term $L^{n,k}_i$. In this paper, the argument is not as simple. As there are multiple particle types and interaction types, our model must account for the possibility that a given particle $i$ has type $Y^n_i = y$ and that particles of type $y$ do not experience interactions of type $k$. In this case, the denominator above \emph{would} be zero. Another possibility we must account for is the case in which a given particle $i$ may interact with $o(n)$ other particles with probability not converging to zero. Handling these cases requires careful analysis of the respective supports of the $n$-particle system and the mean-field process. We examine these cases in detail in Section \ref{ssec::tech}, where we show that a particle $i$ either has $\theta(n)$ neighbors of type $k$ or no neighbors of type $k$. Moreover, we show that these events have null boundaries in the limit, which allows us to apply the continuous mapping theorem.

\ind The bidirectional interactions between the network and agent opinions result from our choice to allow $B$ from \eqref{eqn::Anevolve} to depend on the state of the agents $Y^n_{\{i,j\}}$. The dependence of $B$ on the network $A^n_{ij}$ at the previous time instance induces the so-called persistence effect in the social science/opinion dynamics literature (see, for example, \cite{Zhuetal23}). Subsequently, this leads to an intriguing dependence structure in the limiting model. We provide a thorough examination of the probabilistically rich conditional structure of the limiting model. This is achieved in Proposition \ref{prop::welldef} and Corollary \ref{coro::morestruct}. These results characterize the limiting model fully. In order to characterize this conditional structure, 
we use the conditional propagation of chaos result proved in \cite{GanSpiSus25} and recalled here as Proposition \ref{prop::condconvres} in Section \ref{sec::condpropchaos}. In addition, in order to derive the limiting behavior, we apply the sample perspective idea of  \cite{GanSpiSus25}, appropriately modified to handle the intricacies of the model (\ref{eqn::Znevolve})-(\ref{eqn::Anevolve}).

\ind To relate more concretely to the related statistics and opinion dynamics literature, the model (\ref{eqn::Znevolve})-(\ref{eqn::Anevolve}) covers as special cases some of the applied network models in the literature. In particular, we mention the following examples.

\begin{example}[BCOD Model]
\label{ex::BCOD}

When the interaction kernel $B$ takes the form
\[ B(z_{\{1,2\}},y_{\{1,2\}},b,a) = \indic{\|z_1-z_2\|_2 \leq \delta},\]
then our model becomes an \emph{opinion dynamics with bounded confidence} (BCOD) model, e.g. \cite{HegKra02}. Specifically, in the model of \cite{HegKra02}, the authors take $\Lambda_0(y) = 1$, $m=1$,  $\Lambda_1(y) = 0$  and the noise component $\xi_i(t)=0$ for all agents $i$.

\end{example}

\begin{example}[Dynamic latent space models without coevolution] 
In the typical dynamic latent space network models, see for example \cite{sarkar2006dynamic,Sewell02102015}  latent characteristics
evolve in time in a Markov manner, and the links between nodes are driven by probabilities
determined by node distance in the underlying latent space; that is, the interaction function $B$ does not depend on the network at the previous time. This class of models includes the BCOD model of Example \ref{ex::BCOD}. In particular, those models build on a base model in which $q=m=1$, $\Lambda_0 \equiv 1$,  $\Lambda_1 \equiv 0$, and $\xi_i(t) \overset{\te{i.i.d.}}{\sim} \mathcal{N}(0,\tau^2I_d)$, where $\tau$ is a parameter. The interaction kernel $B$ takes the form
\begin{align*}
\te{logit}(B(z_{\{1,2\}},y_{\{1,2\}},b,a)) = \alpha -\|z_1-z_2\|,
\end{align*}
where $\alpha$ is a parameter. It is easily verified that this model satisfies all of the assumptions of Section \ref{sssec::assu}.
\end{example}

\begin{example}[CLSNA Model, \cite{Zhuetal23,Panetal24, CantayDinoHancongEtall2026}]
\label{ex::CLSNA}
Consider the following model: we set $\xi_i(t) \overset{\te{i.i.d.}}{\sim} \mathcal{N}(0,\tau^2I_d)$, where $\tau$ is a parameter. We assume that all particles are separated into two static types, $y=1$ and $y=2$, defined by the static type evolution function
\[P(z,y_0,y_1) = \indic{y_0 = y_1}.\]
The model has three interaction types: for $y = 1,2,$ type $y$ interactions are attractive interactions between two type $y$ particles. Type 3 interactions are repulsive interactions between particles of different types. The strength of attraction/repulsion is governed by parameters $\gamma^w_y > 0$, $y = 1,2,$ and $\gamma^b < 0$. Under this model, $\Lambda_0(y) = (1 - \gamma^w_y-\gamma^b)I_d$, $\Lambda_1(y) = \gamma^w_yI_d$ and $\Lambda_2(y) = \gamma^bI_d$. The interaction kernel $B$ takes the form
\begin{align*}
\te{logit}(B(z_{{1,2}},y_{{1,2}},b,a)) = \begin{cases}
\alpha + \delta\indic{a=b}-|z_1-z_2| &\te{ if } y_1=y_2\te{ and }a = y_1\te{ or } y_1\neq y_2\te{ and } a = 3,\\
-\infty &\te{ otherwise,}
\end{cases}
\end{align*}
where $\alpha$ and $\delta$ are parameters. Using the fact that particle types are assumed to be static, the results of this paper hold for this model. The uniform integrability of this model (Assumption \ref{assu::ui}) is established in Proposition \ref{prop:uisuffconds}. Specifically, it is  shown that the model satisfies Assumptions \ref{assu::ybds} and \ref{assu::bdsy}.
\end{example}

\ind The rest of the paper is organized as follows. In Section \ref{ssec::assures}, we  state our results more precisely. Specifically, in \ref{sssec::assu}, we describe the assumptions under which our main results hold. In Section \ref{sssec::samplelimit}, we describe the mean-field limit in detail and state our main result: the convergence of a random sample of $k$ particles to the mean-field limit. In Section \ref{ssec::Gconv}, we describe the graphon convergence of the patent particle network process. In Section \ref{sec::proofs}, we prove our results. The paper comes with three appendices. In Appendix \ref{sec::ui} we establish sufficient conditions for uniform integrability, i.e., conditions that lead to Assumption \ref{assu::ui}. In Appendix \ref{sec::fnzsjust}, we present a technical result which is used in the proofs. In Section \ref{sec::condpropchaos}, we summarize a general conditional propagation of chaos result introduced in \cite{GanSpiSus25} which is fundamental to our proof.

\subsection{Notation}
\label{ssec::notation}

Throughout, the sets $\Xmc$ and $\Ymc$ are always assumed to be arbitrary Polish spaces (such as Euclidean space). Same holds for $\Xmc_i$ or $\Ymc_i$ for $i \in I$, where $I$ is any index set. $\P(\Xmc)$ is the space of Borel probability measures on $\Xmc$ equipped with the topology of weak convergence. $C_b(\Xmc)$ is the space of bounded, continuous functions from $\Xmc$ to $\R$. $C_0(\Xmc)$ is the space of continuous functions vanishing at infinity (the uniform closure of the set of continuous functions with compact support). For any $U \subseteq \R$, $C_b(\Xmc,U)$ is the space of bounded, continuous functions from $\Xmc$ to $U$. Both $C_0(\cdot)$ and $C_b(\cdot)$ are equipped with the uniform topology. Lastly, we use $\ps(\Xmc)$ to denote the power set of $\Xmc$; this is only used for certain finite $\Xmc$ for which any measurability concerns are trivially satisfied.

\ind For integers $a < b$, let $[a:b] = \{a,a+1,\dots,b\}$ be the sequence of integers from $a$ to $b$. If $a = 0$, then we simply write $[b]\defeq [0:b]$. This notation is also used in vertex/matrix indices. For example, $x_{1:n} = (x_1,\dots,x_n)$ and $M_{1:k,1:k} = (M_{ij})_{i,j \in 1:k}$. Let $\N = \{1,2,\dots\}$ denote the natural numbers and let $\N_0 = \{0\}\cup \N$ be the set of whole numbers. For $a,b \in \R$, $a\wedge b = \min\{a,b\}$ and $a \vee b = \max\{a,b\}$. Given $x \in \R^d$, $|x| = \sqrt{x\cdot x}$. Given a function $f: \Xmc \to \R^d$, $\|f\|_{\infty} = \sup_{x \in \Xmc}|f(x)|$ is the uniform norm of $f$. $S_n$ is the set of permutations of the set $[1:n]$.

\ind Let $\mf(\Xmc)$ be the space of functions from $\N_0$ to $\Xmc$ and let $\mf_t(\Xmc)$ be the space of functions from $[t]$ to $\Xmc$. Given a process $x\in \mf(\Xmc)$, we write $x(t)$ for the value of $x$ at time $t$ and $x[a:b] \defeq (x(a),\dots,x(b))$ for the trajectory of $x$ in the time interval $[a:b]$. For $t \in \N_0$, we shorten $x[0:t]$ to $x[t]\in \mf_t(\Xmc)$. 

\ind For $k \in \N$, we use the notation $\incs_k$ for the subset $\{(i,j): 1\leq i < j \leq k\}\subset [1:k]^2$. Finally, throughout the article, we define $0/0 = 0$.

\section{Assumptions and Limiting Behavior of the Latent Particle System}
\label{ssec::assures}

In this section, we outline the assumptions we make about the model as well as the asymptotic results we derive on the convergence of the latent process. The assumptions of Section \ref{sssec::assu} are similar in form to the assumptions of \cite{GanSpiSus25}, but modified to allow for multiple particle types and interaction types. 

\subsection{Assumptions}
\label{sssec::assu}

We begin by explicitly stating a few assumptions about the conditional structure of the process described in \eqref{eqn::Znevolve}-\eqref{eqn::Anevolve}. For each $n \in \N$ and $t \in \N$, recall that $\Fmc^{Y,n}_t \defeq \sigma(Z^n[t],Y^n[t-1],A^n[t-1])$, and $\Fmc^{A,n}_t \defeq \sigma(Z^n[t],Y^n[t],A^n[t-1])$. Define $\Fmc^{Y,n}_0 \defeq \sigma(Z^n(0))$ and $\Fmc^{A,n}_0 \defeq \sigma(Z^n(0),Y^n(0))$. Moreover, for $t \geq 0$, define $\Fmc^n_t \defeq \sigma(Z^n[t],Y^n[t],A^n[t])$.

\begin{assumption}[Conditional Structure and Absolute Continuity]
\label{assu::cond}
The process $(Z^n,A^n,\xi)$ possesses the following properties:
\begin{enumerate}[label = (\alph*)]
\item The collection $\{\xi_i(t)\}_{i \in \N,t\in \N_0}$ is i.i.d., absolutely continuous with full support and for each $t \in \N_0$, $(\xi_i(t))_{i\in\N}$ is independent of $\bigvee_{n \in \N}\Fmc^n_t$.

\item For each $n \in \N$ and $t \in \N_0$, $\{Y^n_i(t)\}_{i \in [1:n]}$ are mutually independent $[m]$-valued random variables given $\Fmc^{Y,n}_t$.

\item For each $n\in \N$ and $t \in \N_0$, $A^n(t)$ is a symmetric $[m]$-random matrix where $\{A^n_{ij}(t)\}_{(i,j) \in \incs_n}$ is a conditionally independent collection of $[m]$-valued random variables given $\Fmc^{A,n}_{t}$. In addition, for any $i\leq n$, $A^n_{ii}(t) = 0$.
\end{enumerate}
\end{assumption}

\begin{remark}
\label{rem::iiis0}
In \cite{GanSpiSus25}, it was assumed that for any $i\leq n$, $A^n_{ii}(t) = 1$. This was necessary to ensure that the denominator of the averaging term $L^n$ would always be non-zero. In the more general setting studied in this paper, we have multiple interaction types, so it no longer makes sense to privilege any specific interaction type. This is why we set $A^n_{ii}(t) = 0$. To ensure that \eqref{eqn::Lnidef} is always well-defined, we apply the convention $0/0 = 0$. This distinction from the model in \cite{GanSpiSus25} vanishes in the limit as $n \to \infty$.
\end{remark}

We assume the initial distribution of the process satisfies the following condition:

\begin{assumption}[Initial Conditions]
\label{assu::init}
For all $n \in \N$, the initial state $(Z^n(0),Y^n(0),A^n(0))$ satisfies the following conditions:
\begin{enumerate}[label = (\alph*)]
\item $Z^n_{1:n}(0)$ are absolutely continuous with full support, exchangeable and there exists an i.i.d. absolutely continuous $\R^d\times \R^d$-random vector pair $Z\pcar{1,2}(0)$ with full support and distribution $\mu_0\times \mu_0 \in \P(\R^d\times \R^d)$ such that convergence in distribution holds:
\[Z^n_{1,2}(0) \Rightarrow Z\pcar{1,2}(0).\]
\item There exists an a.e. continuous function $P_0: \R^d \times [1:q] \to [0,1]$ such that for each $i \in [1:n]$ and $y \in [1:q]$,
\[\PP\left(Y^n_i(0) = y\middle|\Fmc^{Y,n}_0\right) = P_0\left(Z^n_i(0),y\right).\]
Moreover, $\{Y^n_i(0)\}_{i=1}^n$ are mutually conditionally independent given $\Fmc^{Y,n}_0$.
\item there exists an a.e. continuous function $B_0: (\R^d)^2\times [1:q]^2\times [m] \to [0,1]$ such that for all $(i,j) \in \incs_n$ and $a \in [m]$,
\[\PP\left(A^n_{ij}(0) = a\middle|\Fmc^{A,n}_0\right) = B_0\left(Z^n_{\{i,j\}}(0),Y^n_{\{i,j\}}(0),a\right).\]
\end{enumerate} 
\end{assumption}

Finally, we require uniform integrability:

\begin{assumption}[Uniform Integrability]
\label{assu::ui}
For every $t \geq 0$, the collection of all latent positions $\{Z^n_i(t)\}_{i\in [1:n],n \in \N}$ is uniformly integrable.
\end{assumption}

Establishing the uniform integrability of the process can be difficult and/or tedious in practice, so in Appendix \ref{sec::ui}, we provide two easily verifiable sufficient conditions under which the latent positions of the process are uniformly integrable. Both sufficient conditions extend Assumption 2.3 of \cite{GanSpiSus25} to our more complex model. They both require exponential moment bounds on the initial latent state $Z^n_1(0)$ and the driving noise $\xi_1(0)$.

\ind The first sufficient condition (Assumption \ref{assu::bdsgen}) holds for models with dynamic particle types. For each type of interaction $k$, one of the following must hold:
\begin{enumerate}
    \item The probability of an edge of type $k$ existing between two particles decreases at most exponentially in the distance between their latent positions.
    \item Particles can only experience interactions of type $k$ with other particles within some finite range $C_b$.
\end{enumerate}

Our second sufficient condition (Assumptions \ref{assu::ybds} and \ref{assu::bdsy}) holds in the special case that particle types are static. Under this condition, it is possible for pairs of particles to satisfy different bounds (exponential vs finite range) depending on their type. This has applications in models such as the CLSNA model described in Example \ref{ex::CLSNA} in which, depending on their respective types, particle interaction probabilities either decrease exponentially in the distance of their latent positions, or particles do not interact.

Lastly, we add a final minor assumption on the functions $B_0,P_0,B$ and $P$.

\begin{assumption}
\label{assu::Bbds}
For each $(y,y'_0,y'_1,a,a') \in [1:q]^3\times[m]^2$, and $0\leq s \leq t$, define
\begin{align*}
\Dmc_0(y,y'_0,a) &\defeq \{z\in \R^d: B_0(z,z',y,y'_0,a)P_0(z',y'_0) = 0\te{ for a.e. }z' \in \R^d\},\\
\Dmc(y,y'_0,y'_1,a,a') &\defeq \{z\in \R^d: B(z,z',y,y'_1,a,a')P(z',y'_0,y'_1) = 0\te{ for a.e. }z' \in \R^d\}.
\end{align*}
Then for every $(y,y'_0,y'_1,a,a') \in [1:q]^2\times[m]^2$, $\partial \Dmc_0(y,y'_0,a)$ and $\partial \Dmc(y,y'_0,y'_1,a,b)$ are both of Lebesgue measure 0. 
\end{assumption}

\begin{remark}[Assumption \ref{assu::Bbds} Application]
\label{rem::newassu}
Assumption \ref{assu::Bbds} is a technical assumption. For it to be violated, $B_0$, $P_0$, $B$ and/or $P$ would have to be functions for which the boundaries of their supports would be of non-zero Lebesgue measure. There are pathological examples of such functions. However, we do not expect such functions to arise in any standard applications of our framework. The assumption is needed in models for which certain types of particles may have no interactions of a given type $k \in [m]$. In these models, we expect for each particle to have either $0$ interactions of a given type $k$ or $\Theta(n)$ interactions of type $k$ which is crucial for our approach to establishing the mean-field limit. Under Assumption \ref{assu::Bbds}, this intuition holds true.
\end{remark}

\subsection{Limiting Model over a Randomly Sampled Subnetwork}
\label{sssec::samplelimit}

In this section, we describe the behavior as $n\to\infty$ of the latent opinions particle types, and interaction subnetwork generated by a random sample (with or without replacement) of $k$ agents. In the limit, the agents have i.i.d. latent opinions $Z\defeq Z\pcar{1:k}$ satisfying equations \eqref{eqn::Zevolve}-\eqref{eqn::mu2def} below. The limiting subnetwork connecting the agents given by the Bernoulli random variables $A\defeq A\pcar{1:k,1:k}$ with a rich conditional structure detailed in Definition \ref{defn::AZlim} of Section \ref{ssssec::limit}. In Section \ref{ssssec::intuition}, we provide simple heuristic arguments that motivate our description of the limiting behavior of the $k$-agent sample. In Section \ref{ssssec::limit}, we describe the limiting model $(Z\pcar{1:k}[t],A\pcar{1:k,1:k}[t])$ in detail. In Section \ref{ssssec::cstruct} we discuss the conditional structure of the limiting model. Then, in Section \ref{ssssec::mainres}, we rigorously state our main convergence result: that the limiting distribution of the latent opinions and subnetwork generated by a sample of size $k$ is given by $(Z\pcar{1:k}[t],A\pcar{1:k,1:k}[t])$. Our presentation follows in parallel to the analogous derivation in Section 2.2 of \cite{GanSpiSus25} while emphasizing the adjustments needed to accommodate the richer model studied in this paper.

\subsubsection{ Building Intuition}
\label{ssssec::intuition}

Suppose that $n$ is large, and we select a sample of $r$ agents uniformly at random with or without replacement. By a simple exchangeability argument, it can be shown that for all $n \in \N$, the joint opinion dynamics and subnetwork connecting nearly any collection of $r$ agents have the same distribution (with deviations only when agents are chosen multiple times within the same sample). We may, therefore, assume without loss of generality that our sample consists of agents $1,\dots,r$. To keep things simple, let's assume the latent opinions and the subnetwork converge weakly:
\[(Z^n_{1:r}[t],Y^n_{1:r},A^n_{1:r,1:r}[t]) \Rightarrow (Z\pcar{1:r}[t],Y\pcar{1:r}[t],A\pcar{1:r,1:r}[t]).\]
What can we say about the limiting quantities?

\ind First, our assumptions ensure that for large $n$, each agent interacts with at least $cn$ other agents, where $c > 0$ is some (possibly random) constant. This suggests that interactions between any finite set of agents vanish in the limit as $n\to\infty$. In addition, we can infer that the limiting opinions and particle types $\{(Z\pcar{i}[t],Y\pcar{i}[t])\}_{i \in [1:r]}$ are independent. Exchangeability also suggests that they are identically distributed.

\ind The dynamic subnetwork described by $A[t]$ is much more interesting due to its rich conditional structure. Just as in the $n$ particle case, at time $t > 0$, we can construct a new subnetwork $A(t)$ at each time $t$ by setting $A\pcar{ij}(t) = a$ with probability $B(Z\pcar{i,j}(t),Y\pcar{i,j}(t),A\pcar{ij}(t-1),a)$ for each $a \in [m]$ and $(i,j) \in \incs_r$. Moreover, just as in the $n$-particle case, the edges $\{A\pcar{ij}(t)\}_{(i,j)\in \incs_r}$ are conditionally independent given $\sigma(A[t-1],Y[t],Z[t])$. However, in the $n$-particle case, $Z_i[t]$ and $Z_j[t]$ are not independent, while $Z\pcar{i}[t]$ and $Z\pcar{j}[t]$ are independent. This induces a rich conditional structure on $A[t]$. For example, we show the following.
\begin{itemize}
\item For $s > 0$, the random variables $\{A\pcar{ij}(s)\}_{(i,j)\in \incs_r}$ are conditionally independent given $\sigma(A(s-1),Y(s),Z(s))$.
\item The random elements $\{A\pcar{ij}[t]\}_{(i,j)\in \incs_r}$ are conditionally independent given $\sigma(Z[t],Y[t])$.
\item For each $i\neq j$, $A\pcar{ij}[t]$ is a conditional Markov chain given $\sigma(Z[t],Y[t])$. Its Markov kernel is given by the function $B$.
\item The \emph{edge-process} $A\pcar{ij}[s]$ is conditionally independent of $\sigma(Z[s+1:t],Y[s+1:t])$ given $\sigma(Z[s],Y[s])$.
\end{itemize}

We can make use of the conditional Markov chain formulation to find the conditional probability that $A\pcar{ij}(s) = a$ given $\sigma(Z\pcar{i,j}[t],Y\pcar{i,j}[t])$. Describe this probability by the function $B_{s,a}: \mf_t(\R^d)^2\times \mf_t([1:q])^2 \to \R$ satisfying
\begin{equation}
\label{eqn::Bsdef}
B_{s,a}(Z\pcar{i,j}[t],Y\pcar{i,j}[t]) = \PP\left(A\pcar{ij}(s) = a\middle|Z\pcar{i,j}[t],Y\pcar{i,j}[t]\right).
\end{equation}
This conditional structure and the functions $\{B_{s,a}\}_{s \in \N_0,a \in [m]}$ can be leveraged to derive the dynamics of the limiting opinions $Z[t]$ and particle types $Y[t]$. To do this, we show that for any $i\in \N$ and as $n \to \infty$, the sequence of paired random elements $\{(Z^n_j(t),Y^n_j(t),A^n_{ij}(t))\}_{j=1}^n$ becomes approximately conditionally independent given $(Z^n_i[t],Y^n_i[t])$. Therefore, we can prove the following conditional law of large numbers results:
\begin{align}
\label{eqn::cllna}
\frac{1}{n}\sum_{j=1}^n Z^n_j(t)\indic{A^n_{ij}(t)=k} &\Rightarrow \ex{Z'(t)\indic{A'(t)=k}\middle|Z\pcar{i}(t),Y\pcar{i}(t)}\\
\frac{1}{n}\sum_{j=1}^n A^n_{ij}(t) &\Rightarrow \ex{A'(t)\middle|Z\pcar{i}(t),Y\pcar{i}(t)},\label{eqn::cllnb}
\end{align}
where $(Z\pcar{i}[t],Z'[t],Y\pcar{i}[t],Y'[t],A'[t])\deq (Z\pcar{1,2}[t],Y\pcar{1,2}[t],A\pcar{12}[t])$. With this, we can derive the limit of the term $L^{n,k}_i(t)$ from \eqref{eqn::Lnidef}:

\begin{align}
L^{n,k}_i(t) &= \frac{1}{d_{A^{n,k}(t)}(i)}\sum_{j=1}^n Z^n_j(t)\indic{A^n_{ij}(t)=k}\nonumber\\
&= \frac{\frac{1}{n}\sum_{j=1}^n Z^n_j(t)\indic{A^n_{ij}(t)=k}}{\frac{1}{n}\sum_{j=1}^n \indic{A^n_{ij}(t)=k}}\nonumber\\
&\Rightarrow \frac{\ex{Z'(t)\indic{A'(t)=k}\middle|Z\pcar{i}[t],Y\pcar{i}[t]}}{\PP\left(A'(t)=k\middle|Z\pcar{i}[t],Y\pcar{i}[t]\right)}\nonumber\\
&= \frac{\ex{Z'(t)B_{t,k}(Z\pcar{i}[t],Z'[t],Y\pcar{i}[t],Y'[t])\middle|Z\pcar{i}[t],Y\pcar{i}[t]}}{\ex{B_{t,k}(Z\pcar{i}[t],Z'[t],Y\pcar{i}[t],Y'[t])\middle|Z\pcar{i}[t],Y\pcar{i}[t]}}\nonumber\\
&\defeq L\pcar{i}_k(t).\label{eqn::Liintuit}
\end{align}

From here, we can then derive the following limiting dynamics for $Z\pcar{i}$:
\begin{equation}
\label{eqn::Zintuit}
Z\pcar{i}(t+1) = \Lambda_0Z\pcar{i}(t) + \sum_{k=1}^m \Lambda_k L\pcar{i}_k(t) + \xi_i(t).
\end{equation}

\subsubsection{A Description of the Limiting Model}
\label{ssssec::limit}

In this section, we provide a complete description of the distribution of the limiting latent opinions and subnetwork of a random sample up to some arbitrarily given time $t \in \N_0$: $(Z\pcar{1:r}[t],Y\pcar{1:r}[t],A\pcar{1:r,1:r}[t])$. Importantly, we provide multiple characterizations of the conditional structure of the model. For this, we make use of the following filtrations. For each $0\leq s \leq t$ define 
\begin{equation}
\label{eqn::filtrations}
\Tmc^r_s \defeq \sigma\left(Z\pcar{1:r}[s],Y\pcar{1:r}[s],A\pcar{1:r,1:r}[s]\right),\quad \Gmc^r_s \defeq \sigma\left(Z\pcar{1:r}[s],Y\pcar{1:r}[s]\right)\te{ and } \Gmc^{Z,r}_s \defeq \sigma\left(Z\pcar{1:r}[s]\right),
\nonumber
\end{equation}
\begin{align*}
\Gmc^{A,r}_t &\defeq \begin{cases}
\sigma\left(Z\pcar{1:r}[s],Y\pcar{1:r}[s],A\pcar{1:r,1:r}[s-1]\right)&\te{ if } s > 0,\\
\sigma\left(Z\pcar{1:r}(0),Y\pcar{1:r}(0)\right)&\te{ if } s = 0.
\end{cases} \\
\Gmc^{Y,r}_t &\defeq \begin{cases}
\sigma\left(Z\pcar{1:r}[s],Y\pcar{1:r}[s-1],A\pcar{1:r,1:r}[s-1]\right)&\te{ if } s > 0,\\
\sigma\left(Z\pcar{1:r}(0)\right)&\te{ if } s = 0.
\end{cases}
\end{align*}

For $0\leq s \leq t$, we also define the $\sigma$-algebras $\Hmc^{Y,r}_s$ and $\Hmc^{A,r}_s$ by
\begin{align}
\label{eqn::HmcY}
\Hmc^{Y,r}_s &\defeq \begin{cases}
\sigma\left(Z\pcar{1:r}(s),Y\pcar{1:r}(s-1),A\pcar{1:r,1:r}(s-1)\right) &\te{ if } s > 0,\\
\sigma\left(Z\pcar{1:r}(0)\right) &\te{ if } s= 0.
\end{cases}\\
\Hmc^{A,r}_s &\defeq \begin{cases}
\sigma\left(Z\pcar{1:r}(s),Y\pcar{1:r}(s),A\pcar{1:r,1:r}(s-1)\right) &\te{ if } s > 0,\\
\sigma\left(Z\pcar{1:r}(0),Y\pcar{1:r}(0)\right) &\te{ if } s= 0.
\end{cases}\label{eqn::HmcA}
\end{align}

Combining \eqref{eqn::Bsdef}, \eqref{eqn::Liintuit} and \eqref{eqn::Zintuit}, we define $Z\pcar{i}$ as the solution to the following equations:

\begin{align}
\label{eqn::Zevolve}
Z\pcar{i}(s+1) &= \Lambda_0(Y\pcar{i}(s)) Z\pcar{i}(s) + \sum_{k=1}^m \Lambda_k(Y\pcar{i}(s)) L\pcar{i}_k(s) + \xi_i(s),\\
\label{eqn::Lidef}
L\pcar{i}_k(s)&=\frac{\exmu{\mu_s^2}{Z'(s)B_{s,k}(Z\pcar{i}[s],Z'[s],Y\pcar{i}[s],Y'[s])\middle|Z\pcar{i}[s],Y\pcar{i}[s]}}{\exmu{\mu_s^2}{B_{s,k}(Z\pcar{i}[s],Y\pcar{i}[s],Z'[s],Y'[s])\middle|Z\pcar{i}[s],Y\pcar{i}[s]}}\\
\label{eqn::mu2def}
\mu_s^2 &\defeq \law(Z\pcar{i}[s],Y\pcar{i}[s],Z'[s],Y'[s]) \defeq \mu_s\otimes \mu_s,
\end{align}
where for each $s$, $\xi_{1:r}(s)$ is independent of $\Tmc^r_s$, $\{\xi_i(s)\}_{i\in [1:r],s\in[t]}$ are i.i.d., and $\mu_s = \law(Z\pcar{1}[s],Y\pcar{1}[s])$. To define $B_{s,k}$ above, fix any $(z_{\{1,2\}},y_{\{1,2\}}) \in \mf_s(\R^d)^2\times \mf_s([1:q])^2$. Let $X$ be a Markov process on the set $[m]$ with the following initial condition and time-inhomogenous transition kernel:
\begin{align*}
\PP(X(0) = a) &= B_0(z_{\{1,2\}}(0),y_{\{1,2\}}(0),a),\\
\PP(X(t+1) = a|X(t) = b) &= B(z_{\{1,2\}}(t+1),y_{\{1,2\}}(t+1),b,a).
\end{align*}
Then
\begin{equation}
\label{eqn::Bevolve}
B_{s,k}(z_{\{1,2\}}[s],y_{\{1,2\}}[s]) \defeq \PP\left(X(s) = k\right).
\end{equation}
\ind Lastly, for each $t \in \N_0$, $(Y\pcar{i}(t))_{i \in [1:r]}$ is a collection of mutually conditionally independent $[1:q]$-random variables given $\Gmc^{Y,r}_t$ defined via the following initial condition and transition kernel
\begin{align}
\PP(Y\pcar{i}(t) = y|\Gmc^{Y,r}_{t}) = \begin{cases}
P_0(Z\pcar{i}(0),y) & \te{ if } t = 0,\\
P(Z\pcar{i}(t),Y\pcar{i}(t-1),y) &\te{ if } t > 0.
\end{cases}\label{eqn::Yevolve}
\end{align}

\ind For each $i\neq j \in [1:r]$ and $s \in [t]$, we can also define $A\pcar{ij}(s)$ to be a $[m]$-valued random variable defined by
\begin{equation}
\label{eqn::Aevolve}
\PP\left(A\pcar{ij}(s)=a\middle|\Gmc^{A,r}_s\right) =\begin{cases}
B\left(Z\pcar{i}(s),Y\pcar{i}(s),Z\pcar{j}(s),Y\pcar{j}(s),A\pcar{ij}(s-1),a\right) &\te{ if } s > 0,\\
B_0\left(Z\pcar{i}(s),Y\pcar{i}(s),Z\pcar{j}(s),Y\pcar{j}(s),a\right) &\te{ if } s = 0.
\end{cases}
\end{equation}

\subsubsection{Conditional Structure of the Limiting Model}
\label{ssssec::cstruct}

We now completely define the joint distribution of the limiting mean-field process $(Z\pcar{1:r}[t],Y\pcar{1:r}[t],A\pcar{1:r,1:r}[t])$.

\begin{definition}
\label{defn::AZlim} (Limiting mean-field process) We say that $(Z\pcar{1:r}[t],Y\pcar{1:r}[t],A\pcar{1:r,1:r}[t])$ is a limiting mean-field process (triple) if the following hold. 
For each $i \in [1:r]$, given the initial state $Z\pcar{i}(0)$, $(Z\pcar{i}[t],Y\pcar{i}[t])$ is the unique (in distribution) solution to \eqref{eqn::Zevolve}-\eqref{eqn::Yevolve}. 
Likewise, for each $s \in [t]$ and $i\neq j \in [1:r]$, $A\pcar{ij}(s)$ is a $[m]$-valued random variable satisfying \eqref{eqn::Aevolve}. For any $(i,j) \in [1:r]^2$, $A\pcar{ij}[t] = A\pcar{ji}[t]$ and $A\pcar{ii}(s) = 0$ for all $s \in [t]$. Lastly, $(Z\pcar{i}[t],Y\pcar{i}[t])_{i \in [1:r]}$ are i.i.d. and the joint distribution of $(Z\pcar{1:r}[t],Y\pcar{1:r}[t],A\pcar{1:r,1:r}[t])$ possesses either of the following (equivalent, as proven in Proposition \ref{prop::welldef}) conditional structures:
\begin{enumerate}[label = (\alph*)]
\item For each $s \in [t]$, the $[m]$ random variables in $\{A\pcar{ij}(s)\}_{(i,j) \in \incs_r}$ are mutually conditionally independent given $\Gmc^{A,r}_s$.
\item The $\mf_t([m])$-random elements in $\{A\pcar{ij}[t]\}_{(i,j) \in \incs_r}$ are mutually conditionally independent given $\Gmc^r_t$. In addition, for $(i,j) \in \incs_r$, $A\pcar{ij}[t]$ is a conditional Markov chain given $\Gmc^r_t$ with initial distribution and transition kernel given by
\begin{align}
\PP\left(A\pcar{ij}(s)=a\middle|\Gmc^r_t,A\pcar{ij}[s-1]\right) &= B\left(Z\pcar{i}(s),Y\pcar{i}(s),Z\pcar{j}(s),Y\pcar{j}(s),A\pcar{ij}(s-1),a\right)\te{ for }s > 0,\label{eqn::Atrans}\\
\PP\left(A\pcar{ij}(0)=a\middle|\Gmc^r_t\right) &= B_0\left(Z\pcar{i}(0),Y\pcar{i}(0),Z\pcar{j}(0),Y\pcar{j}(0),a\right).\label{eqn::Ainit}
\end{align}
\item For any $s \in [t]$, the $\mf_s([m])$-random elements in $\{A\pcar{ij}[s]\}_{(i,j) \in \incs_r}$ are mutually conditionally independent given $\Gmc^r_s$.
\end{enumerate}
\end{definition}

Throughout the article, when we reference Definition \ref{defn::AZlim}(a) (or (b) or (c)), we include the first part of the definition in the statement regarding equations \eqref{eqn::Zevolve}-\eqref{eqn::Aevolve}. Proposition \ref{prop::welldef} shows that Definition \ref{defn::AZlim} consistently and completely characterizes the joint distribution of $(Z\pcar{1:r}[t],Y\pcar{1:r}[t],A\pcar{1:r,1:r}[t])$ and that Definition \ref{defn::AZlim}(a),  \ref{defn::AZlim}(b)  and  \ref{defn::AZlim}(c) are indeed equivalent.

\begin{proposition}[Conditional Structure of the Limiting Model]
\label{prop::welldef}
Definitions \ref{defn::AZlim}(a), \ref{defn::AZlim}(b), and \ref{defn::AZlim}(c) individually characterize the distribution of $(Z\pcar{1:r}[t],Y\pcar{1:r}[t],A\pcar{1:r,1:r}[t])$. In addition, the distributions described in all three parts of Definition \ref{defn::AZlim} are equivalent.
\end{proposition}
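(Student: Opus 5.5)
The plan is to show that each of (a), (b), (c), together with the first part of Definition \ref{defn::AZlim} (equations \eqref{eqn::Zevolve}--\eqref{eqn::Aevolve} and the i.i.d. structure of $(Z\pcar{i}[t],Y\pcar{i}[t])_{i\in[1:r]}$), yields one and the same explicit product formula for the joint law. Fix $z\pcar{1:r}[t]$, $y\pcar{1:r}[t]$ and $a_{ij}[t]$ for $(i,j)\in\incs_r$. The target formula is
\begin{equation*}
\PP\left(A\pcar{ij}[t]=a_{ij}[t]\ \forall (i,j)\in\incs_r \,\middle|\, \Gmc^r_t\right)=\prod_{(i,j)\in \incs_r} B_0\left(Z\pcar{i,j}(0),Y\pcar{i,j}(0),a_{ij}(0)\right)\prod_{s=1}^t B\left(Z\pcar{i,j}(s),Y\pcar{i,j}(s),a_{ij}(s-1),a_{ij}(s)\right).
\end{equation*}
Combined with the law $\mu_t^{\otimes r}$ of $\Gmc^r_t$, this determines the joint distribution. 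The law $\mu_t$ itself has to be well defined first. I would check this by induction on $s$: given $\mu_s$, the functions $B_{s,k}$ from \eqref{eqn::Bevolve} are explicit, so $L\pcar{i}_k(s)$ in \eqref{eqn::Lidef} is a measurable function of $(Z\pcar{i}[s],Y\pcar{i}[s])$ (using the convention $0/0=0$, with uniform integrability ensuring the numerator is finite). Then \eqref{eqn::Zevolve} with independent $\xi_i(s)$ and \eqref{eqn::Yevolve} determine $\mu_{s+1}$ uniquely.

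\textbf{(a) gives the formula.} I would argue by induction on $s$ via the chain rule. Write the joint conditional law of $(Z[s],Y[s],A[s])$ as the law at time $s-1$, times the kernel producing $Z(s)$ (from $\xi(s-1)$, which is independent of $\Tmc^r_{s-1}$), times the kernel producing $Y(s)$ (from \eqref{eqn::Yevolve}), times the kernel producing $A(s)$ (from (a) and \eqref{eqn::Aevolve}). The key observation is that the $Z$- and $Y$-kernels depend only on $(Z\pcar{i}[s-1],Y\pcar{i}[s-1])$ and \emph{not} on the $A$'s, since \eqref{eqn::Zevolve} involves the deterministic functional $L\pcar{i}_k$. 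Integrating out the $A$-variables then shows that the $(Z,Y)$-marginal is $\mu_t^{\otimes r}$. Dividing by it leaves exactly the product of edge factors above. The main point requiring care is that $\Gmc^{Y,r}_s$ contains $A(s-1)$, and one must verify that \eqref{eqn::Yevolve} makes $Y(s)$ conditionally independent of $A[s-1]$ given $(Z[s],Y[s-1])$.

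\textbf{(b) and (c).} The product formula is precisely the statement that the $A\pcar{ij}[t]$ are conditionally independent given $\Gmc^r_t$, with each being a conditional Markov chain with kernels \eqref{eqn::Atrans}--\eqref{eqn::Ainit}. Hence (a) implies (b), and conversely (b) gives the formula directly. For (c), I would sum the product formula over $a_{ij}(s+1),\dots,a_{ij}(t)$. Each $B$ factor sums to $1$ in its last argument, so the time-$s$ version of the formula holds with conditioning on $\Gmc^r_t$. Since the right-hand side is $\Gmc^r_s$-measurable, the same holds conditioning on $\Gmc^r_s$, which gives (c). Conversely, given (c), I would apply (c) at times $s$ and $s-1$ and compute the ratio. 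Together with \eqref{eqn::Aevolve}, this recovers the product of the time-$s$ kernels. Here one must use that $(Z(s),Y(s))$ is conditionally independent of $A[s-1]$ given $\Gmc^r_{s-1}$, which follows from \eqref{eqn::Zevolve}, \eqref{eqn::Yevolve} and the independence of the noise, as above. This yields (a), and hence all three characterizations coincide.

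\textbf{Main obstacle.} I expect the hardest step to be this last conditional-independence bookkeeping. Specifically, one must show that passing from conditioning on $\Gmc^{A,r}_s$ to conditioning on $\Gmc^r_t$ does not change the edge kernels, i.e., that future $(Z,Y)$ carry no information about past edges. The first thing I would try is the explicit density and chain-rule computation sketched under (a), done with respect to Lebesgue measure on $\R^d$ times counting measure.
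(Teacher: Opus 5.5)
Your proposal is correct, and its characterization step is exactly the paper's Lemma \ref{lem::achar}. Under (b), the conditional law of the edge trajectories given $\Gmc^r_t$ is the product of Markov-chain path probabilities \eqref{eqn::Atprob}, and together with the $\mu_t^{\otimes r}$ marginal this fixes the joint law.

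Where you differ is in how the equivalence of (a), (b), (c) is obtained. The paper does not reprove it. It cites Lemmas 6.4--6.6 of \cite{GanSpiSus25} and supplies only the ingredient that changes once types are present, Lemma \ref{lem::futind}: $A\pcar{1:r,1:r}[s]$ is conditionally independent of $\Gmc^r_t$ given $\Gmc^r_s$. That lemma is proved by iterated conditional expectations. It uses two facts: $Z\pcar{i}(s'+1)-\xi_i(s')$ is $\Gmc^r_{s'}$-measurable, and the $Y$-kernel in \eqref{eqn::Yevolve} depends only on $(Z\pcar{i}(s'+1),Y\pcar{i}(s'))$. This is precisely the ``future $(Z,Y)$ carry no information about past edges'' issue you single out.

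You instead make the product formula a hub. The chain-rule factorization of the joint law as $g(z,y)\,h(z,y,a)$, with $h$ a product of edge kernels summing to one over the edge variables, gives (a)$\Rightarrow$formula directly. Summing out the future edge variables then yields the multi-step content of Lemma \ref{lem::futind} for free, and for (c)$\Rightarrow$(a) you only need its one-step version. Your route is self-contained and produces the explicit joint law in one computation, much like the density computation in Lemma \ref{lem::support}. The paper's Lemma \ref{lem::futind} works directly with conditional expectations, needs no densities, and lets the earlier paper's arguments be recycled.

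One small caveat concerns your well-posedness check for $\mu_s$. That check is part of the first clause of Definition \ref{defn::AZlim}, so the proposition presupposes it. Also, Assumption \ref{assu::ui} concerns the $n$-particle system, so by itself it does not make the numerator in \eqref{eqn::Lidef} finite for the limit process. That finiteness would have to come from elsewhere, for example via Fatou along the induction in Theorem \ref{thm::Sampleconv}.
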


Definition \ref{defn::AZlim} implies the following additional structure:

\begin{corollary}
\label{coro::morestruct}
For every $s \in [t]$, the random elements $A\pcar{ij}(s)$, $(i,j) \in \incs_r$, are conditionally independent given $\Hmc^{A,r}_s$. In addition, for all $a \in [m]$,
\begin{equation}
\label{eqn::exAH}
\PP\left(A\pcar{ij}(s)=a\middle|\Hmc^{A,r}_s\right) = \PP\left(A\pcar{ij}(s)=a\middle|\Gmc^{A,r}_s\right).
\end{equation}
Lastly, for any $s \in [t]$,
\begin{equation}
\label{eqn::exAG}
\PP\left(A\pcar{ij}(s)=a\middle|\Gmc^r_s\right) = B_{s,a}\left(Z\pcar{i,j}[s],Y\pcar{i,j}[s]\right).
\end{equation}
\end{corollary}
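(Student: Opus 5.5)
We derive all three statements of Corollary \ref{coro::morestruct} from the conditional Markov chain description in Definition \ref{defn::AZlim}(b), which Proposition \ref{prop::welldef} allows us to use. The one structural input beyond (b) is that the future of the latent/type process does not depend on the edges of the finite sample. By \eqref{eqn::Zevolve}--\eqref{eqn::Yevolve}, each pair $(Z\pcar{i},Y\pcar{i})$ evolves through its own mean-field terms $L\pcar{i}_k$, its own noise $\xi_i$, and its own type kernel $P$. None of these involve $A\pcar{1:r,1:r}$. So the key first step is to express $\Gmc^r_t$ as $\Gmc^r_s$ enlarged by quantities independent of everything else.

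\textbf{Step 1 (splitting $\Gmc^r_t$).} Fix $s$. We show that $(Z\pcar{1:r}[s+1:t],Y\pcar{1:r}[s+1:t])$ is a measurable function of three ingredients: the present $(Z\pcar{1:r}[s],Y\pcar{1:r}[s])$, the noises $\xi(s),\dots,\xi(t-1)$, and auxiliary uniforms $U$ used to realize the type draws via \eqref{eqn::Yevolve}. Here $(\xi,U)$ is independent of $\Tmc^r_s$. The consequence is that $\Gmc^r_t \subseteq \Gmc^r_s\vee\sigma(\xi,U)$ with $\sigma(\xi,U)$ independent of $\Tmc^r_s$.

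\textbf{Step 2 (the formula \eqref{eqn::exAG}).} For each $(i,j)$, condition (b) says that, given $\Gmc^r_t$, $A\pcar{ij}[t]$ is a Markov chain. Its initial law is $B_0$ and its kernels are $B$, evaluated along $(Z\pcar{i,j},Y\pcar{i,j})$. Hence $\PP(A\pcar{ij}(s)=a\mid\Gmc^r_t)$ equals $\PP(X(s)=a)$ for the chain $X$ of \eqref{eqn::Bevolve} with the realized paths plugged in. This quantity depends only on the paths up to time $s$, so it equals $B_{s,a}(Z\pcar{i,j}[s],Y\pcar{i,j}[s])$. The value is $\Gmc^r_s$-measurable, so the tower property yields \eqref{eqn::exAG} with $\Gmc^r_s$ in place of $\Gmc^r_t$.

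\textbf{Step 3 (conditioning on $\Hmc^{A,r}_s$).} Set $\Omega_s\defeq\Gmc^{A,r}_s$, the full past of $Z,Y,A$ up to time $s-1$ together with $(Z(s),Y(s))$. First, (b) gives the conditional joint law of the edges given $\Gmc^r_t$. Combining this with Step 1 and the tower property, we obtain
\[
\PP\bigl(\forall (i,j)\in\incs_r,\ A\pcar{ij}(s)=a_{ij}\,\big|\,\Omega_s\bigr)=\prod_{(i,j)\in\incs_r} B\bigl(Z\pcar{i,j}(s),Y\pcar{i,j}(s),A\pcar{ij}(s-1),a_{ij}\bigr).
\]
This recovers (a). The right-hand side is $\Hmc^{A,r}_s$-measurable and $\Hmc^{A,r}_s\subseteq\Omega_s$. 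Conditioning once more onto $\Hmc^{A,r}_s$ therefore gives the same product. This proves both the conditional independence given $\Hmc^{A,r}_s$ and \eqref{eqn::exAH}. The case $s=0$ is identical, with $B_0$ replacing $B$, since there $\Hmc^{A,r}_0=\Gmc^{A,r}_0$.

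\textbf{Main obstacle.} The delicate point is deriving the product formula given $\Omega_s$ rather than given $\Gmc^r_t$. Two facts are needed for this. First, the edge history $A[s-1]$ carries no information about the future path $(Z,Y)[s+1:t]$ beyond what $(Z,Y)[s]$ already carries; this is exactly what Step 1 supplies. Second, we must check that the Markov property in (b) holds jointly over the pairs, not just edge by edge. This follows by combining the mutual conditional independence in (b) with each edge's Markov property, which together give a product of Markov chains given $\Gmc^r_t$. The Markov property at time $s$ then conditions on $A[s-1]$ jointly across all pairs.
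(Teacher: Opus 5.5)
Your argument is correct, and its two load-bearing steps coincide with the paper's proof. Identity \eqref{eqn::exAG} comes from summing the Markov path probabilities of Definition~\ref{defn::AZlim}(b); your explicit tower step from $\Gmc^r_t$ to $\Gmc^r_s$ spells out what the paper leaves implicit. Identity \eqref{eqn::exAH} comes from projecting the $\Hmc^{A,r}_s$-measurable quantity $B(Z\pcar{i,j}(s),Y\pcar{i,j}(s),A\pcar{ij}(s-1),a)$ from $\Gmc^{A,r}_s$ down to $\Hmc^{A,r}_s\subseteq\Gmc^{A,r}_s$.

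You differ only in where the $\Gmc^{A,r}_s$-level product formula comes from. The paper reads it directly off \eqref{eqn::Aevolve}, which is part of every version of the definition, together with Definition~\ref{defn::AZlim}(a), available through Proposition~\ref{prop::welldef}. You instead re-derive (a) from (b), and in that re-derivation your Step~1 is superfluous; the ``main obstacle'' you name is not an obstacle in this direction. The joint Markov property from (b) already makes the conditional probability of $\{A\pcar{ij}(s)=a_{ij},\ (i,j)\in\incs_r\}$ given $\Gmc^r_t\vee\sigma(A\pcar{1:r,1:r}[s-1])$ equal to a product of $B$'s. That product is $\Gmc^{A,r}_s$-measurable, and $\Gmc^{A,r}_s\subseteq\Gmc^r_t\vee\sigma(A\pcar{1:r,1:r}[s-1])$, so the tower property alone finishes.

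The fact that the edge history carries no extra information about future positions and types (the paper's Lemma~\ref{lem::futind}) is the input needed for the converse implication (a)$\Rightarrow$(b), not here. Moreover, Step~1 as you state it is not justified. It would require enlarging the probability space to carry the uniforms $U$, plus a construction guaranteeing that $(\xi,U)$ is jointly independent of $\Tmc^r_s$, and you only assert this.

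On the positive side, your product formula given $\Hmc^{A,r}_s$ actually proves the conditional-independence claim. The paper's written proof leaves that claim implicit and only writes out the one-edge identity \eqref{eqn::exAH}.
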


\subsubsection{Convergence Results}
\label{ssssec::mainres}

We now rigorously state the results described in Section \ref{ssssec::intuition} in terms of the limiting mean-field process described in Section \ref{ssssec::limit}. We additionally describe some hydrodynamic limits which follow as a consequence of our main result below. 

\ind We begin by stating our main result: the limiting distribution of the latent opinions of a sample of size $k$ and the resulting subnetwork. We prove this result in Section \ref{ssec::mainpf}.

\begin{theorem}[Asymptotic Distribution of a Random Sample]
\label{thm::Sampleconv}
Suppose Assumptions \ref{assu::cond}, \ref{assu::init}, \ref{assu::ui} and \ref{assu::Bbds} hold. Fix any $t \in \N_0$ and for each $n \in \N$, let $M^n_r = \{m^n_1,\dots,m^n_r\}$ be a uniform random sample (with or without replacement) of size $r$ from the set $[1:n]$ so that $Z^n_{m^n_{1:r}}$ is a sample of size $r$ from the population of latent opinions in our $n$-agent model. Then, the sample latent positions, particle types and the associated network trajectory converge in distribution:
\[\left(Z^n_{m^n_{1:r}}[t],Y^n_{m^n_{1:r}}[t],A^n_{m^n_{1:r},m^n_{1:r}}[t]\right) \Rightarrow \left(Z\pcar{1:r}[t],Y\pcar{1:r}[t],A\pcar{1:r,1:r}[t]\right),\]
where $\left(Z\pcar{1:r}[t],Y\pcar{1:r}[t],A\pcar{1:r,1:r}[t]\right)$ satisfy Definition \ref{defn::AZlim}.
\end{theorem}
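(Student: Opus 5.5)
By exchangeability of the $n$-particle system (inherited from exchangeability of $Z^n(0)$ and the symmetric conditional structure in Assumptions \ref{assu::cond} and \ref{assu::init}), it suffices to treat the sample $\{1,\dots,r\}$ without replacement. With replacement, the probability of a repeated index is $O(r^2/n)\to 0$, so that case follows too. The proof is an induction on $t$. The inductive hypothesis at time $t$ is that for every fixed $r$,
\[(Z^n_{1:r}[t],Y^n_{1:r}[t],A^n_{1:r,1:r}[t]) \Rightarrow (Z\pcar{1:r}[t],Y\pcar{1:r}[t],A\pcar{1:r,1:r}[t]),\]
with the limit given by Definition \ref{defn::AZlim}. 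Proposition \ref{prop::welldef} lets us use whichever of the equivalent forms (a), (b), (c) is convenient. The base case $t=0$ follows from Assumption \ref{assu::init}. Convergence of $Z^n_{1:r}(0)$ to i.i.d.\ $\mu_0$ comes from exchangeability plus the convergence of pairs, via a de Finetti/Sznitman-type argument. The types and edges are then generated through the a.e.\ continuous kernels $P_0,B_0$, and absolute continuity of the limit makes discontinuity sets null, so the usual conditional-sampling continuous mapping argument applies.

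For the step $t\to t+1$, write $Z^n_i(t+1)$ via \eqref{eqn::Znevolve}. The key claim is joint convergence of $(Z^n_{1:r}[t],Y^n_{1:r}[t],A^n_{1:r,1:r}[t],L^{n,k}_{1:r}(t))_{k}$ to the limiting triple together with $(L\pcar{i}_k(t))$ from \eqref{eqn::Lidef}. To prove it, apply the conditional propagation of chaos result, Proposition \ref{prop::condconvres}, to the exchangeable array $\{(Z^n_j[t],Y^n_j[t],A^n_{ij}[t])\}_{j>r}$ conditioned on the tagged particles $1:r$. The inductive hypothesis with sample size $r+2$ supplies the required two-particle convergence: the pairs $j,j'$ become conditionally i.i.d.\ given the tagged trajectories, with conditional law $\mu_t$ tilted by $B_{t,\cdot}$ (Corollary \ref{coro::morestruct}, \eqref{eqn::exAG}). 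Uniform integrability (Assumption \ref{assu::ui}) then upgrades this to the conditional laws of large numbers \eqref{eqn::cllna}--\eqref{eqn::cllnb} for the numerator $\frac1n\sum_j Z^n_j(t)\indic{A^n_{ij}(t)=k}$ and the denominator $\frac1n d_{A^{n,k}(t)}(i)$. These limits are
\[N_k(Z\pcar{i}[t],Y\pcar{i}[t])=\exmu{\mu_t^2}{Z'(t)B_{t,k}\mid Z\pcar{i}[t],Y\pcar{i}[t]},\qquad D_k(Z\pcar{i}[t],Y\pcar{i}[t])=\exmu{\mu_t^2}{B_{t,k}\mid Z\pcar{i}[t],Y\pcar{i}[t]}.\]
The finitely many terms with $j\le r$ contribute $O(1/n)$.

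The main obstacle is passing to the ratio, since $x/y$ is discontinuous at $y=0$ and the convention $0/0=0$ must be matched. The plan is to split on the event $\{D_k(Z\pcar{i}[t],Y\pcar{i}[t])=0\}$ and its complement.
\begin{itemize}
\item \emph{The set where $D_k=0$.} Unwinding the Markov chain definition of $B_{t,k}$, this set is determined by the sets $\Dmc_0,\Dmc$ of Assumption \ref{assu::Bbds}, applied along the trajectory. On this set I will show that $d_{A^{n,k}(t)}(i)=0$ with probability tending to one. The reason is that a type-$k$ edge to $i$ requires some $j$ in a region where $B\,P$ vanishes a.e., which has conditional probability zero given the trajectory of $i$; hence $L^{n,k}_i(t)=0=L\pcar{i}_k(t)$.
\item \emph{The complement.} There $D_k>0$, so the continuous mapping theorem applies provided the boundary of the zero set is null under $\mu_t$.
\end{itemize}
The null-boundary claim follows from Assumption \ref{assu::Bbds} and absolute continuity of $Z\pcar{i}(s)$, which is inherited inductively from the absolutely continuous full-support noise $\xi$. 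What remains is the case where $D_k$ is small but positive, i.e.\ degrees of order $o(n)$ with nonvanishing probability. Here I would show that $D_k$ stays bounded below off a set of vanishing measure near the boundary, and control the ratio by $|L^{n,k}_i|\le \max_j|Z^n_j(t)|$ restricted to the neighbours of $i$, together with uniform integrability.

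Given the joint convergence above, independence of $\xi_{1:r}(t)$ from $\Fmc^n_t$ yields convergence of $Z^n_{1:r}[t+1]$ to the solution of \eqref{eqn::Zevolve}. The new limiting positions $Z\pcar{i}(t+1)$ are absolutely continuous. The types $Y^n(t+1)$ and edges $A^n(t+1)$ are sampled via the a.e.\ continuous kernels $P,B$ with conditional independence (Assumption \ref{assu::cond}(b),(c)). Their limits therefore satisfy \eqref{eqn::Yevolve}, \eqref{eqn::Aevolve}, and the conditional independence of Definition \ref{defn::AZlim}(a) at time $t+1$. Finally, i.i.d.-ness of the tagged limits comes from the fact that the $L\pcar{i}_k$ are deterministic functionals of each particle's own trajectory, combined with independence of the noises. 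This closes the induction.
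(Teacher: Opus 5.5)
Your plan follows the paper's proof closely. The shared elements are:
\begin{itemize}
\item the same induction on $t$;
\item conditional propagation of chaos plus uniform integrability for the numerator and denominator of $L^{n,k}_i(t)$;
\item a set $\Emc$ built from $\Dmc_0,\Dmc$, with $\mu_t$-null boundary, on which the limiting denominator vanishes (Lemma \ref{lem::vanishbound});
\item vanishing of the $n$-particle degree on $\Emc$ (Lemma \ref{lem::posdenom}). The paper gets this almost surely from the equal-support Lemma \ref{lem::support}, which is the same absolute-continuity mechanism you invoke for the pair $(Z^n_i(s),Z^n_j(s))$;
\item the noise step, followed by conditional sampling of types and edges.
\end{itemize}
The only organizational difference is how the $r$ tagged particles are handled. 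You condition on all of them at once, which requires an ``exchangeable excluding $1{:}r$'' variant of Proposition \ref{prop::condconvres}. The paper conditions on one particle and then assembles the $r$ particles together with $A^n_{1:r,1:r}[t]$ via a conditional Slutsky lemma (Inductive Claim IV). Both work.

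The one thing to fix is your ``remaining'' case of small but positive $D_k$, because it is not a separate case.
\begin{itemize}
\item Take $\psi(x,d)=d\,\indic{x\notin\Emc}+\indic{x\in\Emc}$. Then $N^{n,k}/D^{n,k}=N^{n,k}/\psi(X^n,D^{n,k})$, almost surely (or with probability tending to one in your version).
\item The map $(x,n,d)\mapsto n/\psi(x,d)$ is discontinuous only on $\{x\in\partial\Emc\}\cup\{x\notin\Emc,\ d=0\}$, and the limit law does not charge this set.
\item A limit point with small positive $d$, away from $\partial\Emc$, is therefore a continuity point, and the continuous mapping theorem already covers it. Weak convergence never requires bounding the ratio on events of small probability.
\end{itemize}
Drop that step rather than try to prove it. The control you sketch would fail anyway, since uniform integrability of each $Z^n_j(t)$ does not bound $\max_j|Z^n_j(t)|$ over order-$n$ many neighbours.
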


We now provide a few hydrodynamic and conditional hydrodynamic limits. Using the notation from Definition \ref{defn::AZlim}, recall that for any $t \in \N$, $\mu_t = \law(Z\pcar{1}[t])$. Additionally, define
\begin{equation}
\label{eqn::muatdef}
\mu\pcar{i}_{A,t} \defeq \law\left(Z\pcar{i}[t],Y\pcar{i}[t],Z'[t],Y'[t],A'[t]\middle|Z\pcar{i}[t],Y\pcar{i}[t]\right),
\end{equation}
where $(Z\pcar{i},Y\pcar{i},Z',Y',A') \deq (Z\pcar{1},Y\pcar{1},Z\pcar{2},Y\pcar{2},A\pcar{12})$. 
\begin{theorem}[Hydrodynamic Limits]
\label{thm::hydro}
Suppose Assumptions \ref{assu::cond}, \ref{assu::init}, \ref{assu::ui} and \ref{assu::Bbds} hold. Then for any $t \in \N_0$, the following hydrodynamic convergences hold:
\begin{equation}
\label{eqn::muntdef}
\mu^n_t \defeq \frac{1}{n}\sum_{i=1}^n \delta_{Z^n_i[t],Y^n_i[t]} \to \mu_t\te{ in probability},
\end{equation}
and for any $i \in \N$,
\begin{equation}
\label{eqn::munaitdef}
(Z^n_i[t],Y^n_i[t],\mu^n_{A,i,t}) \defeq \left(Z^n_i[t],Y^n_i[t],\frac{1}{n}\sum_{j=1}^n \delta_{Z^n_i[t],Y^n_i[t],Z^n_j[t],Y^n_j[t],A^n_{ij}[t]}\right) \Rightarrow (Z\pcar{i}[t],Y\pcar{i}[t],\mu\pcar{i}_{A,t}).
\end{equation}
\end{theorem}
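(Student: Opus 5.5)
Both convergences in Theorem \ref{thm::hydro} will be deduced from Theorem \ref{thm::Sampleconv} with $r=2$ and $r=3$, using the standard equivalence between convergence of empirical measures and propagation of chaos for exchangeable systems.

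\textbf{First claim.} Exchangeability of $(Z^n_i[t],Y^n_i[t])_{i\le n}$ follows from Assumption \ref{assu::init}(a) and the symmetric dynamics \eqref{eqn::Znevolve}--\eqref{eqn::Anevolve}. By Theorem \ref{thm::Sampleconv} with $r=2$ and sampling with replacement, the pair of marginals converges to $\mu_t\otimes\mu_t$, since the $(Z\pcar{i}[t],Y\pcar{i}[t])$ are i.i.d.\ under Definition \ref{defn::AZlim}. Fix $f\in C_b(\mf_t(\R^d)\times\mf_t([1:q]))$. Then
\[
\Emb\left[\left(\int f\,d\mu^n_t\right)^2\right]=\Emb\left[f(Z^n_{m_1}[t],Y^n_{m_1}[t])f(Z^n_{m_2}[t],Y^n_{m_2}[t])\right]\to\left(\int f\,d\mu_t\right)^2,
\]
and similarly $\Emb\left[\int f\,d\mu^n_t\right]\to\int f\,d\mu_t$. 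Hence $\mathrm{Var}\left(\int f\,d\mu^n_t\right)\to0$ and $\int f\,d\mu^n_t\to\int f\,d\mu_t$ in probability. Applying this to a countable convergence-determining family of functions $f$ gives $\mu^n_t\to\mu_t$ in probability in $\P(\cdot)$ with the weak topology.

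\textbf{Second claim.} Fix $i$; by exchangeability we may take $i=1$ with the remaining indices sampled uniformly. Let
\[
\nu^{(1)}=\law(Z\pcar{1},Y\pcar{1},Z\pcar{2},Y\pcar{2},A\pcar{12}\mid Z\pcar{1}[t],Y\pcar{1}[t]).
\]
It suffices to show joint convergence in distribution of $(Z^n_1[t],Y^n_1[t],\int g\,d\mu^n_{A,1,t})$ to $(Z\pcar{1}[t],Y\pcar{1}[t],\int g\,d\nu^{(1)})$ for each bounded continuous $g$, jointly over a countable determining family. The plan is to test against $h(Z^n_1,Y^n_1)$ with $h$ bounded continuous and compute first and second moments:
\[
\Emb\left[h(Z^n_1,Y^n_1)\int g\,d\mu^n_{A,1,t}\right]=\Emb\left[h(Z^n_1,Y^n_1)\,g(Z^n_1,Y^n_1,Z^n_J,Y^n_J,A^n_{1J})\right]+o(1),
\]
where $J$ is uniform on $[1:n]$. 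By Theorem \ref{thm::Sampleconv} with $r=2$ this tends to $\Emb\left[h\,g(Z\pcar{1},Y\pcar{1},Z\pcar{2},Y\pcar{2},A\pcar{12})\right]=\Emb\left[h\int g\,d\nu^{(1)}\right]$. The corresponding second moment
\[
\Emb\left[h\left(\int g\,d\mu^n_{A,1,t}\right)^2\right]
\]
involves two independent uniform indices $J,K$ and the triple $(1,J,K)$. Theorem \ref{thm::Sampleconv} with $r=3$ sends it to
\[
\Emb\left[h\,g(Z\pcar{1},Y\pcar{1},Z\pcar{2},Y\pcar{2},A\pcar{12})\,g(Z\pcar{1},Y\pcar{1},Z\pcar{3},Y\pcar{3},A\pcar{13})\right].
\]
This is the step that uses the conditional structure. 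By Definition \ref{defn::AZlim}(b), $A\pcar{12}[t]$ and $A\pcar{13}[t]$ are conditionally independent given $\Gmc^3_t$, and their conditional laws depend only on the pairs $(1,2)$ and $(1,3)$ respectively. Together with the i.i.d.\ structure of $(Z\pcar{i},Y\pcar{i})$, this shows that $(Z\pcar{2},Y\pcar{2},A\pcar{12})$ and $(Z\pcar{3},Y\pcar{3},A\pcar{13})$ are conditionally i.i.d.\ given $(Z\pcar{1}[t],Y\pcar{1}[t])$. The limit therefore equals $\Emb\left[h\left(\int g\,d\nu^{(1)}\right)^2\right]$.

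It follows that $\Emb\left[\left(\int g\,d\mu^n_{A,1,t}-\Phi_g(Z^n_1,Y^n_1)\right)^2\right]\to0$, where $\Phi_g(z,y)=\int g\,d\nu^{(1)}$ evaluated at $(Z\pcar{1},Y\pcar{1})=(z,y)$. Since $\Phi_g$ is only measurable, this last step needs an approximation argument: approximate $\Phi_g$ in $L^2(\mu_t)$ by continuous functions $\phi$, and use the first two moment identities with $h=\phi$ and $h=\phi^2$. The resulting convergence in probability, combined with $(Z^n_1,Y^n_1)\Rightarrow(Z\pcar{1},Y\pcar{1})$, yields the joint weak convergence in \eqref{eqn::munaitdef}.

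The main obstacle is this last approximation step, together with verifying the conditional i.i.d.\ property from Proposition \ref{prop::welldef} and Corollary \ref{coro::morestruct}. The $o(1)$ corrections from coincident indices $j=1$ or $J=K$ are routine because $g$ and $h$ are bounded.
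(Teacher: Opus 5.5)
Your proposal is correct, but it takes a different route from the paper.

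\textbf{The paper's route.} The paper's proof is a one-line appeal to imported machinery. Claim \eqref{eqn::muntdef} is Sznitman's characterization of propagation of chaos (Proposition~\ref{prop::convres}) applied to the $r=2$ case of Theorem~\ref{thm::Sampleconv}. Claim \eqref{eqn::munaitdef} is the ``if'' direction of the conditional propagation of chaos result, Proposition~\ref{prop::condconvres}, applied to the $r=3$ case with $X=(Z\pcar{1}[t],Y\pcar{1}[t])$ and $Y=(Z\pcar{2}[t],Y\pcar{2}[t],A\pcar{12}[t])$. That proposition needs $\Xmc/\Ymc$-convenience, in particular that $\law(X,Y\mid X)$ depends continuously on $X$. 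The paper checks this in Inductive Claim I, using a.s.\ continuity of $B_{t,a}$, which follows from a.e.\ continuity of $B,B_0$ together with Lemma~\ref{lem::abscont}.

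\textbf{Your route.} You reprove both statements directly by first and second moment computations. Your variance argument for \eqref{eqn::muntdef} is the standard proof of Sznitman's result. For \eqref{eqn::munaitdef}, you correctly derive that $(Z\pcar{2},Y\pcar{2},A\pcar{12})$ and $(Z\pcar{3},Y\pcar{3},A\pcar{13})$ are conditionally i.i.d.\ given $(Z\pcar{1}[t],Y\pcar{1}[t])$, using Definition~\ref{defn::AZlim}(b) and \eqref{eqn::Atprob}. This route is self-contained, and after the $L^2$ approximation it does not need continuous dependence of the conditional law on $X$, nor local compactness. The paper's route is shorter once Proposition~\ref{prop::condconvres} is taken as given.

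\textbf{One step to fix.} Drop the intermediate claim $\Emb\bigl[\bigl(\int g\,d\mu^n_{A,1,t}-\Phi_g(Z^n_1[t],Y^n_1[t])\bigr)^2\bigr]\to 0$. When $\Phi_g$ is merely measurable, this does not follow from weak convergence of $(Z^n_1[t],Y^n_1[t])$, and it can fail in general. Instead, fix a continuous bounded $\phi$ and expand the square. Use your second-moment identity with $h\equiv 1$, your first-moment identity with $h=\phi$, and $\Emb[\phi^2(Z^n_1[t],Y^n_1[t])]\to\int\phi^2\,d\mu_t$. This gives
\[
\lim_{n\to\infty}\Emb\Bigl[\Bigl(\int g\,d\mu^n_{A,1,t}-\phi\bigl(Z^n_1[t],Y^n_1[t]\bigr)\Bigr)^2\Bigr]=\bigl\|\Phi_g-\phi\bigr\|^2_{L^2(\mu_t)}.
\]
Now test against a bounded Lipschitz $F$ on the product space and compare, in turn, $\int g\,d\mu^n_{A,1,t}$ with $\phi(Z^n_1[t],Y^n_1[t])$, then $\phi(Z\pcar{1}[t],Y\pcar{1}[t])$ with $\Phi_g(Z\pcar{1}[t],Y\pcar{1}[t])$. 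Letting $\|\Phi_g-\phi\|_{L^2(\mu_t)}\to0$ yields the joint convergence directly. The same argument handles finitely many $g$'s at once, which is what the countable-determining-family step needs.
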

\begin{proof}
This is just a direct application of Theorem \ref{thm::Sampleconv} and a conditional propagation of chaos result \cite[Proposition 5.5]{GanSpiSus25} (restated in the appendix as Proposition \ref{prop::condconvres}).
\end{proof}
\begin{remark}
\label{rmk::munaitdefcpropchaos}
Note that \eqref{eqn::muntdef} of Theorem \ref{thm::hydro} is a standard propagation of chaos result. Likewise, we may regard \eqref{eqn::munaitdef} as a \emph{conditional} propagation of chaos result as it describes a hydrodynamic limit in which the limiting measure is a conditional law. We discuss the concept of conditional propagation of chaos in more detail in Section \ref{sec::condpropchaos}.
\end{remark}

\section{Graphon and Multigraphon Convergence of the Latent Particle Network Process}
\label{ssec::Gconv}

In this section, we examine the interaction networks generated by the model (with adjacency matrix $A^n(t)$) and establish appropriate convergence theorems regarding these graphs. For completeness, we start with a standard review of dense graph convergence in graphon space. We then show that for each $k \in [m]$, the networks $G^{n,k}$ representing interactions of type $k$ converge in a graphon sense and that the network trajectories (collection of all interaction networks at all times) converge in a multiplexon sense described in Section \ref{sssec::trajconv} below. We also show that the graphon limit of the interaction network between agents is determined by the functions $B_{s,a}$ defined in \eqref{eqn::Bsdef}, and the multiplexon limit of the interaction network trajectory is described by the conditional Markov chain outlined in Definition \ref{defn::AZlim}(b). In this section, we review standard definitions, keeping the notation as in \cite{GanSpiSus25} for continuity and consistency purposes, while we emphasize the main differences of the two articles.


\subsection{Left-Convergence of Graphs and Graphon Convergence}
\label{sssec::graphon}

\ind In this work, we work with the notion of left-convergence, which we recall here.  Let $H = (V_H,E_H)$ and $G = (V_G,E_G)$ be two graphs. Then a \emph{homomorphism} from $H$ to $G$ is a map $\phi: V_H\to V_G$ such that if $\{u,v\} \in E_H$, then $\{\phi(u),\phi(v)\} \in E_G$. If $\te{Hom}(H,G)$ is the set of homomorphisms from $H$ to $G$, then the \emph{homomorphism density} of $H$ in $G$ is the function
\[t(H,G) \defeq \frac{\left|\te{Hom}(H,G)\right|}{|V_G|^{|V_H|}}.\]

\ind For each $n \in \N$, let $G_n$ be a graph with $n$ vertices. Then the sequence $\{G_n\}_{n \in \N}$ is said to be \emph{left-convergent} if for all simple, finite graphs $H$, $\lim_{n\to\infty} t(H,G_n)$ exists.

If the sequence $\{G_n\}_{n \in \N}$ is left-convergent, then its limit can be described in terms of a \emph{graphon} \cite{LovSze06}, which is defined below. Let $\wsp_*$ be the space of bounded, symmetric and measurable functions $W: [0,1]^2 \to \R$ equipped with the \emph{cut norm}:
\[\|W\|_{\square} \defeq \sup_{S,T\in\borel([0,1])} \left|\int_{S\times T} W(x,y)\,dx\,dy\right|.\]
Let $\wsp = \{W \in \wsp_*: 0\leq W \leq 1\}$. Then $\wsp$ is a complete, separable metric space, and any $W \in \wsp$ is called a \emph{graphon}. Any finite graph $G = (V,E)$ can be associated with its \emph{empirical graphon} $W^G \in \wsp$ given by
\[W^G(x,y) = \indic{\{\lceil xn\rceil,\lceil yn\rceil\} \in E},\]
where $n = |V|$. Furthermore, the function $t(H,\cdot)$ can be expanded to the space of graphons in the following manner:
\[t(H,W) = \int_{[0,1]^{|V_H|}} \prod_{\{i,j\} \in E_H} W(x_i,x_j)\,dx_1,\dots,\,dx_{|V_H|}.\]
It's easily verified that for any finite graph $G$, $t(H,G) = t(H,W^G)$. 

\ind A map $\psi$ is called a measure-preserving transformation (m.p.t.) if for any Borel-measurable $A \subset [0,1]$, $A$ and $\psi^{-1}(A)$ have the same Lebesgue measure. Two graphons $W_1$ and $W_2$ are said to be \emph{equivalent} ($W_1\sim W_2$) if $W_2$ lies in the closure of the set
\[\{W: W(x,y) = W_1^{\psi}(x,y)\defeq W_1(\psi(x),\psi(y))\te{ for an m.p.t. } \psi:[0,1]\to[0,1]\}.\]
This is analogous to describing two graphs as equivalent if they are isomorphic. 

\ind Define the quotient space $\alt{\wsp}\defeq \wsp/\sim$ and say that $\wsp \ni W \sim \alt{W} \in \alt{\wsp}$ if $W$ is a representative graphon of the equivalence class $\alt{W}$. Then $\alt{\wsp}$ is a compact metric space with respect to the metric
\[\delta_{\square}(W_1,W_2) \defeq \inf_{\substack{\psi: [0,1]\to [0,1]\\\te{measure-preserving}}} \|W_1 - W_2^{\psi}\|_{\square}.\]
It can be shown that for $W_1,W_2\sim \alt{W}\in\alt{\wsp}$ and any graph $H$, $t(H,W_1) = t(H,W_2)$, so the homomorphism density $t(H,\alt{W})$ is a well-defined quantity. With this background, we may now state the following seminal result:

\begin{proposition}
\label{prop::graphonleft}
\cite[Theorem 11.5]{Lov12} A sequence of graphs $\{G_n\}_{n\in \N}$ is left-convergent if and only if there exists a graphon class $\alt{W} \in \alt{\wsp}$ such that
\[\lim_{n\to\infty} \alt{W}^{G_n} = \alt{W}.\]
In addition, for all finite, simple graphs $H$,
\[\lim_{n\to\infty} t(H,G_n) = t(H,\alt{W}).\]
\end{proposition}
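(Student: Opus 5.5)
The statement is the classical Lovász--Szegedy characterization, quoted from \cite[Theorem 11.5]{Lov12}. I would not reprove it from scratch; I would reduce it to three standard facts about graphon space and then check the equivalence. The facts are: (i) compactness of $(\alt{\wsp},\delta_{\square})$ (the Lovász--Szegedy compactness theorem, via the weak regularity lemma and a martingale argument); (ii) the counting lemma $|t(H,W_1)-t(H,W_2)|\leq |E_H|\,\delta_{\square}(W_1,W_2)$; (iii) the uniqueness theorem, that $t(H,W_1)=t(H,W_2)$ for all finite simple $H$ forces $W_1\sim W_2$. I also use $t(H,G)=t(H,W^G)$, recorded above.

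\emph{Counting lemma.} I would prove (ii) by telescoping. Order the edges of $H$ as $e_1,\dots,e_{|E_H|}$ and write $\prod_{e}W_1-\prod_e W_2$ as a sum of $|E_H|$ terms. In the $\ell$-th term the factor at $e_\ell$ is $W_1-W_2$, and every other factor is $W_1$ or $W_2$, hence bounded by one. Fix all variables except the two endpoints of $e_\ell$. The remaining integrand then has the form $\int (W_1-W_2)(x,y)f(x)g(y)\,dx\,dy$ with $0\leq f,g\leq 1$, which is at most $4\|W_1-W_2\|_{\square}$ (or exactly $\|\cdot\|_\square$ if the cut norm is taken with function weights). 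Invariance of $t(H,\cdot)$ under measure-preserving maps then passes this bound to $\delta_{\square}$.

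\emph{The "if'' direction.} If $\alt{W}^{G_n}\to\alt{W}$, then (ii) gives $t(H,G_n)=t(H,W^{G_n})\to t(H,\alt{W})$ for every $H$. So the sequence is left-convergent, and this also yields the displayed limit formula.

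\emph{The "only if'' direction.} Suppose $t(H,G_n)$ converges for every $H$. By (i), every subsequence of $\alt{W}^{G_n}$ has a further subsequence converging in $\delta_{\square}$ to some $\alt{W}'$. By (ii), $t(H,\alt{W}')=\lim_n t(H,G_n)$ for every $H$. Thus all subsequential limits share the same homomorphism densities, and by (iii) they coincide. A sequence in a compact metric space whose subsequential limits all agree converges, so $\alt{W}^{G_n}\to\alt{W}$.

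The main obstacle is (iii), the uniqueness of graphons with prescribed homomorphism densities; (i) is substantial but standard. The approach I would try first is sampling. Let $\mathbb{G}(k,W)$ be the $W$-random graph on $k$ vertices. Its law is determined by the densities $t(H,W)$ through inclusion--exclusion over induced subgraphs. Moreover, $\delta_{\square}(W^{\mathbb{G}(k,W)},W)\to 0$ almost surely as $k\to\infty$ (the second sampling lemma). Hence two graphons with equal densities are both almost-sure $\delta_\square$-limits of identically distributed random sequences. Coupling these samples and using the triangle inequality gives $\delta_{\square}(W_1,W_2)=0$. Since $\alt{\wsp}$ is defined as a quotient, this says exactly that $\alt{W}_1=\alt{W}_2$. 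Because the paper simply cites this result, I would present the argument at this level and defer the regularity-lemma details of (i) and (iii) to \cite{Lov12,LovSze06}.
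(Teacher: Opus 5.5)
Your proposal is correct. The paper gives no proof of its own and simply quotes \cite[Theorem 11.5]{Lov12}, and your reduction to compactness of $(\alt{\wsp},\delta_{\square})$, the counting lemma, and the uniqueness theorem (obtained from the second sampling lemma plus a coupling) is the standard route to that cited result.

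One small point about your telescoping step: because $0\le f,g\le 1$ there, the bilinear bound holds with constant $1$ for the set-based cut norm used in the paper, via a layer-cake decomposition of $f$ and $g$. So (ii) holds exactly as you state it, not only with the factor $4$, although the factor $4$ would also suffice for the argument.
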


With this, we can state our first graph convergence result. Let $\lambda$ be the Lebesgue measure.

We now look at the graph generated by a single type of interaction at a single time. Our measure-preserving transformation also needs to take the particle types into account.
\begin{theorem}[Graphon Limit]
\label{thm::Antconv}
Suppose that Assumptions \ref{assu::cond}-\ref{assu::ui} and \ref{assu::Bbds} hold, and fix any $t \in \N_0$ and $k \in [m]$. For each $n \in \N$, let $G_{n,k}$ be the random graph consisting of all connections of type $k$. That is, let $G_{n,k}$ be the random graph with adjacency matrix $A^{n,k}(t)\defeq (\indic{A^n_{ij}(t)=k})_{i,j=1}^n$. For any $\lambda$-$\mu_t$ measure-preserving transformation $\theta_t: [0,1]\to \mf_t(\R^d)\times\mf_t([1:q])$, let $W_k$ be the graphon defined by
\[W_k(u_1,u_2) = B_{t,k}\left(\theta_t(u_1),\theta_t(u_2)\right)\te{ for all } u_1,u_2 \in [0,1].\]

Then the equivalence class $\alt{W}_k$ of $W_k$ does not depend on our choice of $\lambda$-$\mu_t$ measure-preserving transformations $\theta_t$. In addition, $\alt{W}^{G_{n,k}} \to \alt{W}_k$ in probability in $\alt{\wsp}$.
\end{theorem}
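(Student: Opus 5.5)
By Proposition \ref{prop::graphonleft}, and since $\alt{\wsp}$ is compact metric, it suffices to show that for every finite simple graph $H=(V_H,E_H)$ with $|V_H|=r$,
\[t(H,G_{n,k}) \to t(H,\alt{W}_k)\quad\text{in probability}.\]
Left-convergence in probability of homomorphism densities for countably many $H$ then gives $\delta_\square(\alt{W}^{G_{n,k}},\alt{W}_k)\to 0$ in probability. A subsequence argument combined with a diagonal extraction over an enumeration of all $H$ converts this to almost-sure convergence along subsequences.

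Independence of $\theta_t$ comes first. Write $T(u)=\theta_t(u)$. Then
\[t(H,W_k)=\int_{[0,1]^r}\prod_{\{i,j\}\in E_H}B_{t,k}(T(u_i),T(u_j))\,du=\ex{\prod_{\{i,j\}\in E_H}B_{t,k}\big(Z\pcar{i}[t],Y\pcar{i}[t],Z\pcar{j}[t],Y\pcar{j}[t]\big)},\]
where $(Z\pcar{i}[t],Y\pcar{i}[t])$ are i.i.d.\ with law $\mu_t$. This quantity depends only on $\mu_t$. Hence any two choices of $\theta_t$ give graphons with identical homomorphism densities, so they are equivalent by the uniqueness part of graphon theory (\cite{Lov12}).

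For the first moment, $t(H,G_{n,k})$ is the average of $\prod_{\{i,j\}\in E_H}\indic{A^n_{m_im_j}(t)=k}$ over $r$ indices $m_{1:r}$ sampled uniformly with replacement. Therefore
\[\ex{t(H,G_{n,k})}=\ex{\prod_{\{i,j\}\in E_H}\indic{A^n_{m^n_im^n_j}(t)=k}}.\]
The integrand is a bounded function of the sampled network that is continuous on the discrete space $[m]$. Theorem \ref{thm::Sampleconv} then gives convergence of this expectation to $\ex{\prod_{\{i,j\}\in E_H}\indic{A\pcar{ij}(t)=k}}$; repeated indices contribute $A^n_{ii}=0$ and occur with probability $O(1/n)$. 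By Definition \ref{defn::AZlim}(c), the edges $\{A\pcar{ij}(t)\}$ are conditionally independent given $\Gmc^r_t$. Corollary \ref{coro::morestruct}, specifically \eqref{eqn::exAG}, identifies each conditional probability as $B_{t,k}(Z\pcar{i,j}[t],Y\pcar{i,j}[t])$. Taking expectations yields exactly $t(H,W_k)$.

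For concentration, compute the second moment. $\ex{t(H,G_{n,k})^2}$ equals the same expression for the disjoint union $H\sqcup H$ evaluated on a sample of size $2r$. Applying Theorem \ref{thm::Sampleconv} with sample size $2r$, the conditional independence then factorizes the limit given $\Gmc^{2r}_t$. The two copies of $H$ involve disjoint i.i.d.\ vertex blocks, so the expectation splits into $t(H,W_k)^2$. The variance therefore tends to $0$, and Chebyshev's inequality gives convergence in probability.

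The main obstacle is the step where the convergence in Theorem \ref{thm::Sampleconv} must be applied to the sample with replacement and the limit must be factorized correctly. This requires that conditional independence in Definition \ref{defn::AZlim}(c) hold across edges sharing vertices, given the full vertex trajectories, and that \eqref{eqn::exAG} hold conditionally on $\Gmc^r_t$ rather than only on the pair. I would verify this by conditioning on $\Gmc^r_t$ and using the Markov-chain description in (b) edge by edge; the remaining steps are routine.
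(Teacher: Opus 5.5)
Your argument is correct, but it is organized differently from the paper. The paper never proves Theorem \ref{thm::Antconv} on its own. By Remark \ref{rmk::thmantconv} it is the component $S=\{(t,k)\}$ of the multigraphon convergence in Theorem \ref{thm::Antmultconv} and Corollary \ref{coro::Wdefthm}, and the proof of that theorem is deferred to \cite[Theorem 3.9]{GanSpiSus25}.

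You instead give a direct single-layer moment argument:
\begin{itemize}
\item $\ex{t(H,G_{n,k})}$ is a sampled-subgraph probability, and Theorem \ref{thm::Sampleconv} passes it to the limit.
\item Definition \ref{defn::AZlim}(c) together with \eqref{eqn::exAG} identifies that limit as $t(H,W_k)$.
\item The identity $t(H,G)^2=t(H\sqcup H,G)$, applied with a sample of size $2r$, makes the variance vanish.
\item Equality of all homomorphism densities (weak isomorphism) gives the $\theta_t$-independence. You prove this explicitly, whereas the paper does not argue it separately here.
\end{itemize}
Your route is short and self-contained. It uses only Theorem \ref{thm::Sampleconv} and the conditional structure of Definition \ref{defn::AZlim}, with no probability-graphon machinery.

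The paper's route proves more: joint convergence of all layers $(s,k)\in[t]\times[m]$ under a common relabeling of vertices. Layer-by-layer homomorphism densities cannot give this (cf.\ the discussion around \eqref{eqn::badconv}). To get it by your method, you would need three things:
\begin{itemize}
\item decorate the edges of $H$ with subsets $S\subseteq[t]\times[m]$;
\item identify the resulting limits through the conditional Markov chain of Definition \ref{defn::AZlim}(b);
\item use the homomorphism-density characterization of convergence for probability-graphons from \cite{AbrDelWei23}.
\end{itemize}

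Two minor remarks. First, the point you flag as the main obstacle is already supplied by the paper. Definition \ref{defn::AZlim}(c) asserts mutual conditional independence over all of $\incs_r$ given $\Gmc^r_t$, including pairs that share a vertex, and \eqref{eqn::exAG} is stated conditionally on $\Gmc^r_s$. Second, since $[m]=[0:m]$, the case $k=0$ is allowed. A repeated index then contributes $\indic{A^n_{ii}(t)=0}=1$ rather than $0$. This is harmless, because such coincidences have probability $O(1/n)$, and Theorem \ref{thm::Sampleconv} already covers sampling with replacement.
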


\begin{remark}
\label{rmk::specase}
Theorem \ref{thm::Antconv} is a special case of Theorem \ref{thm::Antmultconv} and Corollary \ref{coro::Wdefthm}. See Remark \ref{rmk::thmantconv} for details. When $m = q = 1$, this reduces to Theorem 3.4 of \cite{GanSpiSus25}.
\end{remark}

\begin{remark}[Existence of a Measure-Preserving Transformation]
\label{rmk::thetatexists}
The argument that $\theta_t$ exists is nearly identical to the argument presented in \cite[Remark 3.6]{GanSpiSus25}.
\end{remark}

\subsection{Convergence of Latent Network Trajectories as Probability Graphons}
\label{sssec::trajconv}

Let $t\in \N_0$ be fixed. For each $n \in \N, s \in [t]$ and $k \in [m]$, define the graph $G_{n,k}(s) = ([1:n],E_{n,k}(s))$ to be the graph with adjacency matrix $A^{n,k}(s)$. Given Theorem \ref{thm::Antconv}, it  follows that
\begin{equation}
\label{eqn::badconv}
\alt{W}^{G_{n,k}[t]} \defeq \left(\alt{W}^{G_{n,k}(s)}\right)_{s \in [t],k\in[m]} \to \alt{W}[s] \defeq (\alt{W}_k(s))_{s \in [t],k\in[m]} \te{ in probability,}
\end{equation}
where $W_k(s)$ is the graphon described in Theorem \ref{thm::Antconv}. However, this result is somewhat unsatisfying. For each $n$, the vertices of $G_{n,k}[t]$ all correspond to a fixed particle. However, \eqref{eqn::badconv} only tells us that there is a way to relabel the vertices of each graph $G_{n,k}(s)$ to get a different graph $G'_{n,k}(s)$ such that
\[W^{G'_{n,k}(s)} \to W_k(s) \te{ for all }s \in [t]\te{ and } k \in [m],\]
where for each $s$ and $k$, $W_k(s)$ is some representative graphon of $\alt{W}_k(s)$. The problem is that the relabeling may differ for each value of $s$ and $k$. This means that the latent particle associated with the vertex labeled ``1'' may be different in $G'_{n,k}(s)$ and $G'_{n,k}(s')$ for $s \neq s'$. As a result, $\alt{W}[s]$ from \eqref{eqn::badconv} does not properly capture heterogeneities in the joint structure of $G_{n,k}$ at different times and for different interaction types.

\ind To handle this, we treat the graph trajectories $G_n[t]$ as \emph{multiplexes}. A multiplex is a collection of graphs sharing a vertex set. Each graph is called a \emph{layer} of the multiplex network. For example, $G_n[t]$ is a multiplex network on the vertex set $[1:n]$ with $(t+1)(m+1)$ layers indexed by $(s,k)$ where for all $s \in [t]$ and $k\in [m]$, the $(s,k)$ layer of the multiplex consists of the edges in $G_{n,k}(s)$. 

\begin{definition}
\label{defn::multdecomp}
For any $(t+1)(m+1)$-layer multiplex $H[t] = (V_H,(E_H(s,k))_{s,k=1}^{t,m})$ and any $S\subseteq [t]\times[m]$, let $H^{S} = (V_H,E_H^{S})$ where
\[E_H^{S} = \cap_{(s,k) \in {S}} E_H(s,k).\]
So $E_H^{S}$ contains all edges that lie in $E_H(s,k)$ for all $(s,k)\in {S}$.
\end{definition}

\ind Next we define multigraphons, which are a special class of probability-graphons recently introduced by Abraham, Delmas and Weibel \cite{AbrDelWei23}. For any finite set $\Smc$, an $\Smc$-layer multigraphon is a vector of graphons indexed by the subsets of $\Smc$: $\mathbf{W} = (W^S)_{S\subseteq \Smc,S\neq\emptyset}$. Let $\Wmc[\Smc]$ be the space of $\Smc$-indexed multigraphons. Then we can define a pseudo-metric $\delta_{\square}[\Smc]$ on $\Wmc[\Smc]$ by
\[\delta_{\square}[\Smc](\mathbf{W}_1,\mathbf{W}_2) = \inf_{\substack{\psi: [0,1]\to[0,1]\\\te{measure-preserving}}}\sum_{\substack{S \subseteq \Smc\\ S\neq \emptyset}} \|W_1^S - (W_2^S)^{\psi}\|_{\square}.\]
Letting $\mathbf{W}_1\sim \mathbf{W}_2$ if and only if $\delta_{\square}[\Smc](\mathbf{W}_1,\mathbf{W}_2) = 0$, let $\alt{\Wmc}[\Smc] = \Wmc[\Smc]/\sim$ and note that $\delta_{\square}[\Smc]$ is a metric on $\alt{\Wmc}[\Smc] = \Wmc[\Smc]/\sim$. Under this definition, the vertices of each layer of the multigraphon are labeled consistently.\footnote{\label{ft::prob}Using the terminology and notation of \cite{AbrDelWei23}, a $\Smc$-indexed multigraphon $\bm{W}$ is topologically equivalent to a probability-graphon $W$ on the power set $\bm{Z} \defeq \ps(\Smc)$ equipped with the discrete topology. More specifically, for any $(x,y) \in [0,1]^2$ and $S \subseteq \Smc, S\neq \emptyset$, $W$ is defined by
\begin{equation}
\label{eq::mpgraphonequiv}
W(x,y;\cdot) = \mu_{x,y} \in \P(\ps(\Smc))\te{ where } \mu_{x,y}(\{S'\subseteq \Smc: S\subseteq S'\}) = W^S(x,y).
\end{equation}
By using this representation, it is easily shown that the metric $\delta_{\square}[\Smc]$ is equivalent to the metric $\delta_{\square,\Pmc}$ introduced in \cite{AbrDelWei23}:
\[\delta_{\square,\Pmc}(\alt{W}_1,\alt{W}_2)\leq \delta_{\square}[\Smc](\alt{\bm{W}}_1,\alt{\bm{W}}_2) \leq 2^{(t+1)(m+1)}\delta_{\square,\Pmc}(\alt{W}_1,\alt{W}_2)\]
for all multigraphon classes (or probability-graphon classes) $\alt{\bm{W}}_1$ and $\alt{\bm{W}}_2$ in $\alt{\Wmc}[\Smc]$, where for $i = 1,2$, $W_i$ is defined via \eqref{eq::mpgraphonequiv} with respect to $\bm{W}_i$ and $\alt{W}_i$ is the probability-graphon class represented by $W_i$.
}

\ind The empirical multigraphon of a multiplex $G[t]$ is given by
\[\mathbf{W}^{G[t]} = \left(W^{G^S}\right)_{S\subseteq [t]\times[m],S\neq \emptyset}.\]

We now state the following theorem which extends Theorem \ref{thm::Antconv}. The proof of this theorem follows along the lines of Theorem 3.9 of \cite{GanSpiSus25} and we do not repeat it here.
\begin{theorem}[Multigraphon Convergence]
\label{thm::Antmultconv} Suppose Assumptions \ref{assu::cond}-\ref{assu::ui} and \ref{assu::Bbds} hold at time zero and fix any $t \in \N_0$. For each $n \in \N$, let $G_n[t]$ be the random multiplex such that for each $s \in [t]$ and $k \in [m]$, $G_{n,k}(s)$ is the random graph with adjacency matrix $A^{n,k}(s)$. Then there exists a multigraphon $\mathbf{W}$ (given explicitly in \eqref{eqn::Wmultdef} below) such that
\[\alt{\mathbf{W}}^{G_n[t]} \to \alt{\mathbf{W}} \te{ in probability.}\]
\end{theorem}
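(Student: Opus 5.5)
The plan is to follow the route of Theorem 3.9 of \cite{GanSpiSus25}: identify the candidate multigraphon, reduce convergence in $\alt{\Wmc}[\Smc]$ to convergence of homomorphism-type densities, and compute those densities from the sample limit in Theorem \ref{thm::Sampleconv}.

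\textbf{Candidate limit.} Fix a $\lambda$-$\mu_t$ measure-preserving transformation $\theta_t:[0,1]\to\mf_t(\R^d)\times\mf_t([1:q])$, which exists by Remark \ref{rmk::thetatexists}. For every nonempty $S\subseteq[t]\times[m]$ set
\[W^S(u_1,u_2)=\PP\left(X(s)=k\ \forall (s,k)\in S\right),\]
where $X$ is the conditional Markov chain of Definition \ref{defn::AZlim}(b) driven by $(z_{\{1,2\}},y_{\{1,2\}})=(\theta_t(u_1),\theta_t(u_2))$. This is the natural generalization of $B_{t,k}$ from \eqref{eqn::Bevolve}. Symmetry of $B_0$ and $B$ in the pair arguments makes each $W^S$ symmetric. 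Changing $\theta_t$ changes $\mathbf{W}$ only by a common measure-preserving relabeling, so the class $\alt{\mathbf{W}}$ is well defined. Note that $W^S=0$ whenever $S$ contains two distinct $k$ for the same $s$; this is consistent with the empirical side, since each $A^n_{ij}(s)$ takes a single value.

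\textbf{Reduction to densities.} Through the probability-graphon identification in footnote \ref{ft::prob}, I will use the result of \cite{AbrDelWei23} that convergence in $\delta_{\square,\Pmc}$ is equivalent to convergence of the homomorphism densities $t(F,\cdot)$ for finite multigraphs $F$ whose edges are decorated by nonempty subsets $S$. The limit is deterministic and the space is compact, so convergence in probability follows from two facts for each decorated $F$ on $r$ vertices:
\[\ex{t(F,G_n[t])}\to t(F,\mathbf{W})\quad\te{and}\quad \ex{t(F,G_n[t])^2}\to t(F,\mathbf{W})^2.\]

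\textbf{First moment.} Write $t(F,G_n[t])$ as an average over maps $[1:r]\to[1:n]$. Then $\ex{t(F,G_n[t])}$ equals the expectation of $\prod_{\{a,b\}\in E_F}\indic{A^n_{m_am_b}(s)=k\ \forall (s,k)\in S_{ab}}$ for a uniform sample with replacement. Coincident indices contribute $O(1/n)$. The integrand is a bounded function of the finite alphabet $[m]^{\incs_r\times[t]}$, hence continuous. Theorem \ref{thm::Sampleconv} then gives convergence to the same expectation under the limiting triple. By the conditional independence and Markov structure of Definition \ref{defn::AZlim}(b), conditionally on $\Gmc^r_t$ this limit factorizes over the edges of $F$, each factor equal to $W^{S_{ab}}$ evaluated at the sampled points. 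Since the $(Z\pcar{i}[t],Y\pcar{i}[t])$ are i.i.d. with law $\mu_t$, it equals $t(F,\mathbf{W})$ after pulling back through $\theta_t$.

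\textbf{Second moment and main obstacle.} For the second moment I apply the same argument to the disjoint union $F\sqcup F$, which is a sample of $2r$ particles; factorization over the two disconnected components yields $t(F,\mathbf{W})^2$. The step I expect to be the real obstacle is justifying the application of Theorem \ref{thm::Sampleconv} on the full trajectory $[t]$ jointly over all layers, together with the conditional Markov factorization of Definition \ref{defn::AZlim}(b). Both are given earlier in the paper, so what remains is bookkeeping of the subset-intersection indicators. The degenerate-denominator issues of Section \ref{ssec::tech} are already absorbed into Theorem \ref{thm::Sampleconv}.
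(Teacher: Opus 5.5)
Your proposal is correct and follows essentially the route the paper takes by deferring to Theorem 3.9 and Corollary 3.11 of \cite{GanSpiSus25}. Theorem \ref{thm::Sampleconv} at time $t$ gives joint convergence of the whole sampled trajectory $A^n_{1:r,1:r}[t]$, and the conditional Markov structure of Definition \ref{defn::AZlim}(b) identifies the limiting edge-event probabilities with the layers $W^S$, $S\subseteq[t]\times[m]$ (your formula is the intended reading of \eqref{eqn::Wmultdef}). The probability-graphon framework of \cite{AbrDelWei23} then turns convergence of decorated homomorphism densities into convergence in probability of $\alt{\mathbf{W}}^{G_n[t]}$.

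The one point to tighten is that the test graphs $F$ must be simple, with a single subset $S_{ab}$ on each edge. Two parallel edges decorated $S$ and $S'$ on the same pair involve the same $A\pcar{ab}$, so they produce $W^{S\cup S'}$ rather than $W^{S}W^{S'}$; multigraph densities are not cut-continuous. Simple decorated graphs already determine the topology.
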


We can also characterize the multigraphon $\alt{\mathbf{W}}$. Consider the measure-preserving transformation $\theta_t$ defined in Theorem \ref{thm::Antconv}. Recall the time-inhomogeneous Markov chain $X$ defined above \eqref{eqn::Bevolve}. For any $(z_{1,2},y_{1,2}) \in \mf_t(\R^d)^2\times \mf_t([1:q])$, let $B_{z_{1,2},y_{1,2}} = X$, where $X$. 

\begin{corollary}[Multigraphon Limit]
\label{coro::Wdefthm}
Under the conditions of Theorem \ref{thm::Antmultconv}, $\mathbf{W}$ satisfies $\mathbf{W} = (W^S)_{S \subseteq [t], S\neq \emptyset}$, where for each non-empty $S \subseteq [t]$,
\begin{equation}
\label{eqn::Wmultdef}
W^S(u_1,u_2) = \PP\left(B_{\theta_t(u_1),\theta_t(u_2)}(s) = 1 \te{ for all }s \in S\right).
\end{equation}
\end{corollary}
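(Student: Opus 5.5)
The plan is to identify the limit of the empirical multigraphon through its homomorphism densities, in the spirit of the proof of Theorem 3.9 of \cite{GanSpiSus25}. Via the representation in footnote \ref{ft::prob}, $\alt{\mathbf{W}}^{G_n[t]}$ converges in probability to $\alt{\mathbf{W}}$ if and only if, for every finite family of test graphs decorated by layer subsets, the decorated homomorphism densities converge in probability to those of $\mathbf{W}$. Such a family is a finite simple graph $H$ on $[1:r]$ with a nonempty subset $S_e \subseteq [t]\times[m]$ attached to each edge $e$. This follows from the counting lemma and uniqueness for probability-graphons of \cite{AbrDelWei23}. We read \eqref{eqn::Wmultdef} with ``$=1$'' replaced by the layer label: for $S=\{(s,k)\}$, $W^S(u_1,u_2)=\PP(X(s)=k \text{ for all }(s,k)\in S)$, where $X$ is the Markov chain defined above \eqref{eqn::Bevolve}, driven by $\theta_t(u_1),\theta_t(u_2)$.

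The first step is convergence of expectations. Up to an $O(1/n)$ error from repeated indices, $\ex{t(H,\{S_e\},G_n[t])}$ equals the probability that, for a uniform sample $m^n_{1:r}$ with replacement, every edge $e=(i,j)$ of $H$ satisfies $A^n_{m_i m_j}(s)=k$ for all $(s,k)\in S_e$. By Theorem \ref{thm::Sampleconv}, this converges to $\PP(A\pcar{ij}(s)=k\ \forall (s,k)\in S_e,\ \forall e\in E_H)$. The events involved are finite cylinder sets in a discrete coordinate, so they have null boundary and the continuous mapping theorem applies. By Definition \ref{defn::AZlim}(b), the edge processes are conditionally independent given $\Gmc^r_t$, and each is a conditional Markov chain with kernels $B_0,B$. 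Hence the limit equals
\[\Emb\Big[\prod_{e=(i,j)}\PP\big(X_{Z\pcar{i,j},Y\pcar{i,j}}(s)=k\ \forall (s,k)\in S_e\big)\Big].\]
Because $(Z\pcar{i}[t],Y\pcar{i}[t])$ are i.i.d. with law $\mu_t$ and $\theta_t$ is $\lambda$-$\mu_t$ measure preserving, this is exactly $t(H,\{S_e\},\mathbf{W})$. The same identity shows that the class $\alt{\mathbf{W}}$ does not depend on the choice of $\theta_t$.

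The second step upgrades this to convergence in probability by computing second moments. We take the disjoint union $H\sqcup H$ with $2r$ sampled vertices. Its expected density converges, by Theorem \ref{thm::Sampleconv} applied with $2r$, to the corresponding limit. Since the two copies share no vertices, the i.i.d. structure and conditional independence of Definition \ref{defn::AZlim}(b) make this limit factor as $t(H,\{S_e\},\mathbf{W})^2$. Therefore $\mathrm{Var}(t(H,\{S_e\},G_n[t]))\to 0$, and Chebyshev's inequality gives convergence in probability of every decorated density. Since the space of multigraphon classes is compact and these densities determine the class, a subsequence argument gives $\alt{\mathbf{W}}^{G_n[t]}\to\alt{\mathbf{W}}$ in probability.

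I expect the main obstacle to be the first step: justifying that decorated densities characterize the metric $\delta_\square[\Smc]$ and that sampled densities approximate them. I would handle this by transferring the counting and inverse counting lemmas of \cite{AbrDelWei23} through the norm equivalence in footnote \ref{ft::prob}. The model-specific difficulty, namely the ratio terms $L^{n,k}_i$ with possibly vanishing degrees, is already absorbed by Theorem \ref{thm::Sampleconv}, which uses Assumption \ref{assu::Bbds}.
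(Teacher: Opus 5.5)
Your proposal is correct and matches the argument the paper defers to, namely the proof of Theorem 3.9 and Corollary 3.11 of \cite{GanSpiSus25} with the layer set $[t]\times[m]$ in place of $[t]$. That argument identifies the limit through decorated homomorphism densities: their first and second moments are computed from Theorem \ref{thm::Sampleconv} (with $r$ and $2r$ sampled vertices) and the conditional Markov-chain structure of Definition \ref{defn::AZlim}(b), and one then passes to $\delta_\square[\Smc]$ via the probability-graphon counting and uniqueness results together with compactness. Your reading of \eqref{eqn::Wmultdef}, with $S\subseteq[t]\times[m]$ and the event $\{X(s)=k \text{ for all } (s,k)\in S\}$ replacing ``$=1$'', is also the intended one.
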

\begin{proof}
The proof is virtually identical to the proof of \cite[Theorem 3.9, Corollary 3.11]{GanSpiSus25}, only now $S = [t]\times [m]$ instead of $[t]$.
\end{proof}

\begin{remark}
\label{rmk::thmantconv}
As a consequence of Corollary \ref{coro::morestruct} and Definition \ref{defn::AZlim}(b), if we set $S = \{(t,k)\}$, then 
\begin{align*}
W^{\{(t,k)\}}(u_1,u_2) &= \PP\left(B_{\theta_t(u_1),\theta_t(u_2)}(t) = k\right) = \PP\left(A\pcar{12}(t) = k\middle|\Gmc^2_t\right)\bigg|_{(Z\pcar{1:2}[t],Y\pcar{1:2}[t])=\theta_t(u_{1:2})}\\
&= B_{t,k}\left(\theta_t(u_1),\theta_t(u_2)\right) = W_k(u_1,u_2).
\end{align*}
This proves that Theorem \ref{thm::Antconv} is a special case of Theorem \ref{thm::Antmultconv} together with Corollary \ref{coro::Wdefthm}.
\end{remark}


\section{Proofs of main results of Section \ref{ssec::assures}}
\label{sec::proofs}

In this section, we prove the major theoretical statements of the paper. In Section \ref{ssec::limstruct}, we show that the limiting mean-field process of Definition \ref{defn::AZlim} is well-defined, and we rigorously establish the rich conditional structure of the limiting process established in Definition \ref{defn::AZlim} and Corollary \ref{coro::morestruct}. This is utilized in Section \ref{ssec::mainpf} where we prove Theorem \ref{thm::Sampleconv}.

\ind In the introduction, we mentioned the existing article \cite{GanSpiSus25} which describes our model in the special case that $q = m = 1$. For this reason, there are some portions of the upcoming proofs in which arguments from \cite{GanSpiSus25} apply with minimal or no changes. In those  cases, these arguments are summarized and omitted for conciseness.  However, the case studied in this paper has many additional difficulties which we overcome here. The proofs focus mainly on those arguments.

\subsection{Properties of the Limiting Process}
\label{ssec::limstruct}

Before proving the main results of this section, we establish the following useful lemma. This lemma is analogous to \cite[Lemma 6.2]{GanSpiSus25}. However, the proof of the lemma is significantly different due to the existence of particle types $Y\pcar{1:r}$ which are assumed to be trivial in \cite{GanSpiSus25}.  

\begin{lemma}
\label{lem::futind}
If $(Z\pcar{1:r}[t],Y\pcar{1:r}[t],A\pcar{1:r,1:r}[t])$ satisfies the conditions laid out in Definition \ref{defn::AZlim}(a),(b) or (c), then for any $s \in [t]$, $A\pcar{1:r,1:r}[s]$ is conditionally independent of $\Gmc^r_t$ given $\Gmc^r_s$.
\end{lemma}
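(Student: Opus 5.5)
We argue by induction on $t - s$, working with the one-step extension from $\Gmc^r_{u}$ to $\Gmc^r_{u+1}$. The claim is that $A\pcar{1:r,1:r}[s]$ is conditionally independent of $\Gmc^r_t$ given $\Gmc^r_s$. By the chain rule for conditional independence, it suffices to prove the following for every $u$ with $s\le u<t$:
\[
A\pcar{1:r,1:r}[u] \perp \sigma\bigl(Z\pcar{1:r}(u+1),Y\pcar{1:r}(u+1)\bigr)\ \big|\ \Gmc^r_u .
\]
Indeed, this statement implies $A[s]\perp (Z(u+1),Y(u+1))\mid \Gmc^r_u$, since $A[s]$ is a function of $A[u]$. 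Iterating this from $u=s$ up to $u=t-1$ then gives $A[s]\perp \Gmc^r_t\mid\Gmc^r_s$.

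\textbf{One-step claim, the $Z$ part.} By \eqref{eqn::Zevolve}, $Z\pcar{i}(u+1)$ is a measurable function of $(Z\pcar{i}[u],Y\pcar{i}[u])$ and $\xi_i(u)$. This is because $L\pcar{i}_k(u)$ is a deterministic function of $(Z\pcar{i}[u],Y\pcar{i}[u])$ through $\mu_u$ and $B_{u,k}$. The noise $\xi_{1:r}(u)$ is independent of $\Tmc^r_u$. Hence, conditionally on $\Tmc^r_u$, the law of $Z(u+1)$ depends only on $\Gmc^r_u$. This gives $A[u]\perp Z(u+1)\mid \Gmc^r_u$, and moreover $A[u]\perp Z(u+1)\mid\Gmc^r_u$ holds jointly with $\Tmc^r_u$.

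\textbf{One-step claim, the $Y$ part.} This is where the particle types enter, and I expect it to be the main obstacle: the conditioning $\sigma$-algebra $\Gmc^{Y,r}_{u+1}$ in \eqref{eqn::Yevolve} contains $A[u]$. The kernel \eqref{eqn::Yevolve} says $\PP(Y(u+1)=y\mid \Gmc^{Y,r}_{u+1})=\prod_i P(Z\pcar{i}(u+1),Y\pcar{i}(u),y_i)$, by conditional independence across $i$. This expression does not depend on $A[u]$. Therefore, for bounded $f$ of $A[u]$ and $g$ of $(Z(u+1),Y(u+1))$, I would first condition on $\Gmc^{Y,r}_{u+1}$, which removes $Y(u+1)$ in favour of a function of $(Z(u+1),Y(u))$. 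I would then use the $Z$ step to factor $\ex{f\,h(Z(u+1),Y[u],Z[u])\mid\Gmc^r_u}=\ex{f\mid\Gmc^r_u}\ex{h\mid\Gmc^r_u}$. Recombining the factors yields the desired factorization. The care needed is to verify that Definition \ref{defn::AZlim}(a), (b) and (c) each supply the ingredients used: independence of $\xi(u)$ from $\Tmc^r_u$, which includes $A[u]$, and the $Y$ kernel conditioned on the full $\Gmc^{Y,r}_{u+1}$. These are part of the common first part of the definition, so no use of the specific edge structure (a)/(b)/(c) is needed. That is the key point: the edge-process structure only governs $A$ given $Z,Y$, while the lemma concerns the forward evolution of $(Z,Y)$, which is driven by noise and kernels that are blind to $A$.

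Finally, I would assemble the steps formally via the standard identity: $X\perp W_1\mid\Gmc$ together with $X\perp W_2\mid\sigma(\Gmc,W_1)$ implies $X\perp(W_1,W_2)\mid\Gmc$. This is applied with $W_1=Z(u+1)$ and $W_2=Y(u+1)$, and then across $u$.
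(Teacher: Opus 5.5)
Your proof is correct and follows essentially the same route as the paper. The paper also reduces to the one-step fact that $\ex{g(Z\pcar{1:r}[u+1],Y\pcar{1:r}[u+1])\mid\Tmc^r_u}$ is $\Gmc^r_u$-measurable, proved the same way: condition on $\Gmc^{Y,r}_{u+1}$ to replace $Y\pcar{1:r}(u+1)$ by the product of $P$-kernels (which does not involve $A$), then use the independence of $\xi_{1:r}(u)$ from $\Tmc^r_u$. The only cosmetic difference is that the paper chains this one-step claim by the tower property along $\Tmc^r_u$, while you chain it by the contraction rule for conditional independence; the two are equivalent.
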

\begin{proof}
For this proof, recall the following $\sigma$-algebras defined at the beginning of Section \ref{ssssec::limit}. For each $r \in \N$ and $s \in \N_0$, $\Tmc^r_s = \sigma(Z^{1:r}[s],Y^{1:r}[s],A^{1:r,1:r}[s])$ and $\Gmc^r_s = \sigma(Z^{1:r}[s],Y^{1:r}[s])$.

\ind Fix any $s' \geq s$. By \eqref{eqn::Lidef}, $L\pcar{i}(s')$ is $\Gmc^r_{s'}$-measurable for any $i \in [1:r]$. It follows by \eqref{eqn::Zevolve} that $Z\pcar{i}(s'+1) - \xi_i(s')$ is $\Gmc^r_{s'}$-measurable for each $i \in [1:r]$, so $Z\pcar{i}[s'+1]$ is $\Gmc^r_{s'}\vee \sigma(\xi_{1:r}(s'))$-measurable. Let $g: \mf_{s'+1}\left((\R^d)^r\right)\times \mf_{s'+1}\left([1:q]^r\right)\to \R$ be any bounded, measurable function. 
\begin{align*}
&\ex{g(Z\pcar{1:r}[s'+1],Y\pcar{1:r}[s'+1])\middle|\Tmc^r_s} \\
&\ind\ind= \ex{\ex{g(Z\pcar{1:r}[s'+1],Y\pcar{1:r}[s'+1])\middle|\Tmc^r_{s'}}\middle|\Tmc^r_s}\\
&\ind\ind=\ex{\ex{\ex{g(Z\pcar{1:r}[s'+1],Y\pcar{1:r}[s'+1])\middle|\Gmc^{Y,r}_{s'+1}}\middle|\Tmc^r_{s'}}\middle|\Tmc^r_s}\\
&\ind\ind=\ex{\ex{\sum_{\bm{y} \in [1:q]^r}g(Z\pcar{1:r}[s'+1],Y\pcar{1:r}[s'],\bm{y})\PP\left(Y\pcar{1:r}(s'+1) = \bm{y}\middle|\Gmc^{Y,r}_{s'+1}\right)\middle|\Tmc^r_{s'}}\middle|\Tmc^r_s}\\
&\ind\ind=\ex{\ex{\sum_{\bm{y} \in [1:q]^r}g(Z\pcar{1:r}[s'+1],Y\pcar{1:r}[s'],\bm{y})\prod_{i=1}^r P(Z\pcar{i}(s'+1),Y\pcar{i}(s'),y_i)\middle|\Tmc^r_{s'}}\middle|\Tmc^r_s},\\
\end{align*}
where the final equality follows from \eqref{eqn::Yevolve}. Noting that the term in the expectation above is $\Gmc^r_{s'}\vee\sigma(\xi_{1:r}(s'))$-measurable, that $\xi_{1:r}(s')$ is independent of $\Tmc^r_{s'}$, and that $\Gmc^r_{s'}\subset \Tmc^r_{s'}$, it follows that 
\[\ex{g(Z\pcar{1:r}[s'+1],Y\pcar{1:r}[s'+1])\middle|\Tmc^r_{s'}} = \ex{g(Z\pcar{1:r}[s'+1],Y\pcar{1:r}[s'+1])\middle|\Gmc^r_{s'}},\]
so, because $\Gmc^r_{s'} = \sigma(Z\pcar{1:r}[s'],Y\pcar{1:r}[s'])$, there must exist a bounded, measurable $\bar{g}: \mf_{s'}\left((\R^d)^r\right)\times \mf_{s'}\left([1:q]^r\right)\to \R$ such that 
\begin{align*}
\ex{g(Z\pcar{1:r}[s'+1],Y\pcar{1:r}[s'+1])\middle|\Tmc^r_s} &= \ex{\ex{g(Z\pcar{1:r}[s'+1],Y\pcar{1:r}[s'+1])\middle|\Gmc^r_{s'}}\middle|\Tmc^r_s}\\
&= \ex{\tbb{\bar{g}}(Z\pcar{1:r}[s'],Y\pcar{1:r}[s'])\middle|\Tmc^r_s}.
\end{align*}

Iterating this argument, there must exist a bounded, measurable function $h: \mf_{s}\left((\R^d)^r\right)\times \mf_{s}\left([1:q]^r\right)\to \R$ such that
\begin{align*}
\ex{g(Z\pcar{1:r}[s'+1],Y\pcar{1:r}[s'+1])\middle|\Tmc^r_s} &= \ex{\ex{h(Z\pcar{1:r}[s'+1],Y\pcar{1:r}[s'+1])\middle|\Gmc^r_s}\middle|\Tmc^r_s}\\
&= \ex{h(Z\pcar{1:r}[s'+1],Y\pcar{1:r}[s'+1])\middle|\Gmc^r_s},
\end{align*}
where we use the fact that $\Gmc^r_s \subseteq \Tmc^r_s.$

Set $s' = t-1$. Then for any bounded, measurable $g: \mf_{t}\left((\R^d)^r\right)\times \mf_{t}\left([1:q]^r\right)\to \R$, $\ex{g(Z\pcar{1:r}[t],Y\pcar{1:r}[t])\middle|\Tmc^r_s}$ is $\Gmc^r_s$ measurable. Since $\Tmc^r_s = \Gmc^r_s \vee \sigma(A\pcar{1:r,1:r}[s])$ and $\Gmc^r_t = \sigma(Z\pcar{1:r}[t],Y\pcar{1:r}[t])$, it follows that for any bounded, $\Gmc^r_t$-measurable random variable $W$,
\[\ex{W\middle|\Gmc^r_s, A\pcar{1:r,1:r}[t]} = \ex{W|\Gmc^r_s},\]
which completes the proof.
\end{proof}

\subsubsection{Proof of Proposition \ref{prop::welldef}}
We first start by noting that Lemmas 6.4-6.6 of \cite{GanSpiSus25} hold in the context of this model as well (with Definition 2.5 of \cite{GanSpiSus25} replaced by Definition \ref{defn::AZlim} of this paper). The proofs of these lemmas in the context of this paper require minimal modification (e.g. the link functions $B$ and $B_0$ are defined differently between the two papers, but their application within the proofs is near identical). Therefore, we omit the proofs of these lemmas within the context of our model.

\ind By Lemmas 6.4-6.6 of \cite{GanSpiSus25}, Definitions \ref{defn::AZlim}(a),(b), and (c) are equivalent. Therefore it suffices to establish that any one of these definitions completely characterizes the limiting process $(Z\pcar{1:r}[t],Y\pcar{1:r}[t],A\pcar{1:r,1:r}[t])$. This is the content of Lemma \ref{lem::achar} and it concludes the proof of Proposition \ref{prop::welldef}.

\begin{lemma}
\label{lem::achar}
The distribution of $(Z\pcar{1:r}[t],Y\pcar{1:r}[t],A\pcar{1:r,1:r}[t])$ is completely characterized by Definition \ref{defn::AZlim}(b).
\end{lemma}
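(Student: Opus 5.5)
We will show that Definition \ref{defn::AZlim}(b) pins down the joint law of $(Z\pcar{1:r}[t],Y\pcar{1:r}[t],A\pcar{1:r,1:r}[t])$. The law factors as the law of $(Z\pcar{1:r}[t],Y\pcar{1:r}[t])$ times the conditional law of $A\pcar{1:r,1:r}[t]$ given $\Gmc^r_t$, so it suffices to determine each factor.

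\textbf{The conditional law of the network.} Given $\Gmc^r_t$, the edge trajectories $\{A\pcar{ij}[t]\}_{(i,j)\in\incs_r}$ are conditionally independent. Each $A\pcar{ij}[t]$ is a conditional Markov chain on the finite set $[m]$ with initial law \eqref{eqn::Ainit} and kernel \eqref{eqn::Atrans}, which depend only on $\Gmc^r_t$-measurable quantities. Therefore, for every $\bm{a}\in\mf_t([m])^{\incs_r}$,
\[\PP\bigl(A\pcar{ij}[t]=a_{ij}[t]\ \forall (i,j)\bigr|\Gmc^r_t\bigr)=\prod_{(i,j)\in\incs_r}B_0(\cdot,a_{ij}(0))\prod_{s=1}^tB(\cdot,a_{ij}(s-1),a_{ij}(s)),\]
with arguments evaluated at $(Z\pcar{i,j}(s),Y\pcar{i,j}(s))$. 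The symmetry and zero-diagonal conventions then fix the full matrix. This conditional law is an explicit measurable function of $(Z\pcar{1:r}[t],Y\pcar{1:r}[t])$, and it also identifies $B_{s,k}$ in \eqref{eqn::Bevolve} as the corresponding marginal.

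\textbf{The law of the particles.} The pairs $(Z\pcar{i}[t],Y\pcar{i}[t])$ are i.i.d., so it suffices to show that $\mu_t$ is uniquely determined. We induct on $s$.
\begin{itemize}
\item \emph{Base case.} $\mu_0$ is the law of $Z\pcar{1}(0)$, which is $\mu_0$ from Assumption \ref{assu::init}, together with $Y\pcar{1}(0)$ drawn via $P_0$ as in \eqref{eqn::Yevolve}.
\item \emph{Inductive step.} Suppose $\mu_s$ is determined. Then $\mu_s^2=\mu_s\otimes\mu_s$ is determined, and $B_{s,k}$ is a fixed deterministic function. Hence the drift $L\pcar{i}_k(s)$ in \eqref{eqn::Lidef} is a fixed measurable function $\ell_{s,k}(Z\pcar{i}[s],Y\pcar{i}[s])$, using the convention $0/0=0$ when the denominator vanishes.
\item By \eqref{eqn::Zevolve}, $Z\pcar{i}(s+1)$ is a measurable function of $(Z\pcar{i}[s],Y\pcar{i}[s])$ and $\xi_i(s)$. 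The noise $\xi_i(s)$ is independent of $\Tmc^r_s$ and has a fixed law, so the law of $Z\pcar{i}[s+1]$ jointly with $Y\pcar{i}[s]$ is determined.
\item Finally, $Y\pcar{i}(s+1)$ is drawn from $P(Z\pcar{i}(s+1),Y\pcar{i}(s),\cdot)$ given $\Gmc^{Y,r}_{s+1}$. The Lemma \ref{lem::futind}-type argument shows this conditional law depends on the past only through the particle's own variables, so it determines $\mu_{s+1}$.
\end{itemize}

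\textbf{Main obstacle.} The hard step is checking that the construction is consistent, i.e. that the law of $(Z,Y)$ defined by the recursion is compatible with the network law built on top of it. In particular, the edges $A\pcar{1:r,1:r}[s]$, together with the conditioning $\Gmc^{Y,r}_{s+1}$ that involves $A$, must not feed back into the law of $Y\pcar{i}(s+1)$ or of $Z\pcar{i}(s+1)$. We handle this with Lemma \ref{lem::futind}: $A\pcar{1:r,1:r}[s]$ is conditionally independent of $\Gmc^r_t$ given $\Gmc^r_s$. Hence conditioning on $A$ does not alter the $(Z,Y)$ dynamics, and the two factors can be combined without circularity.

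Putting the pieces together, for bounded measurable $f$ we have
\[\ex{f}=\int\sum_{\bm{a}}f\cdot\PP(\bm{a}\mid\cdot)\,d\mu_t^{\otimes r}.\]
Both ingredients are uniquely determined, so the joint law is unique.
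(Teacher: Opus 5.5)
Your proposal is correct and follows essentially the paper's route: you factor the law into the marginal of $(Z\pcar{1:r}[t],Y\pcar{1:r}[t])$ and the conditional law of the edges given $\Gmc^r_t$, and Definition \ref{defn::AZlim}(b) pins down the latter via the product formula \eqref{eqn::Atprob}. The differences are minor: you re-derive uniqueness of the $(Z,Y)$ marginal by forward induction, whereas the paper takes it directly from the first part of Definition \ref{defn::AZlim}; and your ``main obstacle'' paragraph is unnecessary, because for uniqueness there is nothing to reconcile — the $Y$-kernel is $\sigma(Z\pcar{i}[s+1],Y\pcar{i}[s])$-measurable, so the tower property alone removes $A$ from the $(Z,Y)$ recursion and Lemma \ref{lem::futind} is never needed.
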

\begin{proof}
The first part of Definition \ref{defn::AZlim} completely characterizes the marginal distribution of $(Z\pcar{1:r}[t],Y\pcar{1:r}[t])$. Moreover, $A\pcar{ij}[t] = A\pcar{ji}[t]$ for all $i,j \in [1:r]$, and $A\pcar{ii}[t] = 0$ for all $i \in [1:r]$. Therefore, it suffices to prove that the conditional distribution of $(A\pcar{ij}[t])_{(i,j) \in \incs_r}$ given $\Gmc^r_t = \sigma(Z\pcar{1:r}[t],Y\pcar{1:r}[t])$ is well-defined, where we recall that $\incs_r = \{(i,j)\in[r]^2: 1\leq i <j\leq r\}$.

\ind Fix any $\bm{a}\defeq (a_{ij}[t])_{(i,j)\in\incs_r} \in (\mf_t([m]))^{\incs_r}$. Then by Definition \ref{defn::AZlim}(b) and \eqref{eqn::Aevolve}:
\begin{align}
\PP&\left((A\pcar{ij}[t])_{(i,j)\in \incs_r} = (a_{ij}[t])_{(i,j)\in\incs_r}\middle|\Gmc^r_t\right)= \prod_{(i,j)\in \incs_r}\PP\left(A\pcar{ij}[t] = a_{ij}[t]\middle|\Gmc^r_t\right)\nonumber\\
&=\prod_{(i,j)\in \incs_r}\PP\left(A\pcar{ij}(0) = a_{ij}(0)\middle|\Gmc^r_t\right)\prod_{s=1}^t
\PP\left(A\pcar{ij}(s) = a_{ij}(s)\middle|\Gmc^r_t,A\pcar{ij}[s-1]=a_{ij}[s-1]\right)\nonumber\\
&=\prod_{(i,j)\in\incs_r}B_0(Z\pcar{i,j}(0),Y\pcar{i,j}(0),a\pcar{ij}(0))\prod_{s=1}^t B(Z\pcar{i,j}(s),Y\pcar{i,j}(s),a\pcar{ij}[s-1:s]).\label{eqn::Atprob}
\end{align}
This concludes the proof of the lemma. 
\end{proof}

\subsubsection{Proof of Corollary \ref{coro::morestruct}}

We start by establishing \eqref{eqn::exAH}. Fix $(i,j) \in \incs_r$, $a \in [m]$, and $s \in [t]$. For $s = 0$, $\Hmc^{A,r}_0 = \Gmc^{A,r}_0$, so \eqref{eqn::exAH} holds. Then for $s > 0$,
\begin{align*}
\PP\left(A\pcar{ij}(s)=a\middle|\Hmc^{A,r}_s\right) &= \ex{\PP\left(A\pcar{ij}(s)=a\middle|\Gmc^{A,r}_s\right)\middle|\Hmc^{A,r}_s}\\
&= \ex{B\left(Z\pcar{i,j}(s),Y\pcar{i,j}(s),A\pcar{ij}(s-1),a\right)\middle|\Hmc^{A,r}_s}\\
&= B\left(Z\pcar{i,j}(s),Y\pcar{i,j}(s),A\pcar{ij}(s-1),a\right)\\
&= \PP\left(A\pcar{ij}(s)=a\middle|\Gmc^{A,r}_s\right).
\end{align*}
By the above computation, Definition \ref{defn::AZlim}(b), and \eqref{eqn::Bevolve},
\begin{align}
\PP\left(A\pcar{ij}(s) = a\middle|\Gmc^{r}_s\right) &= \sum_{\substack{\bm{a}\in \mf_s([m])\\ \bm{a}(s) = a}} B_0(Z\pcar{i,j}(0),Y\pcar{i,j}(0),\bm{a}(0))\prod_{s'=1}^s B(Z\pcar{i,j}(s'),Y\pcar{i,j}(s'),\bm{a}[s'-1:s'])\nonumber\\
&= B_{s,a}(Z\pcar{i,j}[s],Y\pcar{i,j}[s])\te{ a.s..}\label{eqn::Bsadef}
\end{align}
This completes the proof.

\subsubsection{Absolute Continuity of the Limiting Process}
\begin{lemma}
\label{lem::abscont}
For any $t \in \N_0$, $Z\pcar{1:r}[t]$ is absolutely continuous with respect to the Lebesgue measure on $(\mf_t(\R^d))^r$.
\end{lemma}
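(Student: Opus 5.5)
We argue by induction on $t$, using that each step of \eqref{eqn::Zevolve} is an additive absolutely continuous perturbation of a measurable function of the past. For $t=0$, the coordinates $Z\pcar{1}(0),\dots,Z\pcar{r}(0)$ are i.i.d. with law $\mu_0$, which is absolutely continuous by Assumption \ref{assu::init}(a). Hence $Z\pcar{1:r}(0)$ has density $\prod_i f_0(z_i)$ on $(\R^d)^r$, which settles the base case.

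For the inductive step, suppose $Z\pcar{1:r}[s]$ is absolutely continuous on $(\mf_s(\R^d))^r$. By \eqref{eqn::Zevolve} and \eqref{eqn::Lidef}, for each $i$ we can write
\[Z\pcar{i}(s+1) = F_i\bigl(Z\pcar{1:r}[s],Y\pcar{1:r}[s]\bigr) + \xi_i(s),\]
where $F_i$ is a measurable function; indeed $L\pcar{i}_k(s)$ is $\Gmc^r_s$-measurable, as already used in the proof of Lemma \ref{lem::futind}. Moreover, $\xi_{1:r}(s)$ is i.i.d. with density $\rho$ and independent of $\Tmc^r_s\supseteq\Gmc^r_s$. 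Let $N\subseteq(\mf_{s+1}(\R^d))^r$ be Lebesgue null. For each $\bm{z}[s]$, write $N_{\bm{z}[s]}$ for its section in the last coordinate block. We condition on $\Gmc^r_s$ and use the independence of the noise:
\[\PP\bigl(Z\pcar{1:r}[s+1]\in N\bigr)=\ex{\int_{(\R^d)^r}\indic{F(Z[s],Y[s])+x\in N_{Z[s]}}\prod_{i}\rho(x_i)\,dx}.\]
The inner integral vanishes whenever $N_{Z[s]}$ is null, because translation preserves Lebesgue null sets and the noise law is absolutely continuous. By Fubini, the set $E$ of those $\bm{z}[s]$ for which $N_{\bm{z}[s]}$ has positive measure is itself Lebesgue null. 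By the induction hypothesis, $\PP(Z\pcar{1:r}[s]\in E)=0$, so the expectation is $0$.

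Summing over the finitely many values of $Y\pcar{1:r}[s]$ is harmless. We need only measurability of the map $(\bm{z},\bm{y},x)\mapsto \indic{F(\bm{z},\bm{y})+x\in N_{\bm z}}$, which holds because $F$ is jointly measurable. This completes the induction.

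The main point to check is that $F_i$ is a genuinely well-defined measurable function. This requires that the conditional expectations in \eqref{eqn::Lidef} can be chosen as measurable functions of $(Z\pcar{i}[s],Y\pcar{i}[s])$, with the convention $0/0=0$ when the denominator vanishes. Integrability of the numerator follows from $Z'(s)\in L^1$, which in turn follows inductively from the integrability of $\xi$ and the boundedness of the $B_{s,k}$. Once these regular versions are fixed, the rest of the argument is just Fubini plus the translation invariance of Lebesgue null sets.
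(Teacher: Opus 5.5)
Your argument is correct and is essentially the paper's proof. Both induct on $t$, use that $Z\pcar{i}(s+1)-\xi_i(s)$ is $\Gmc^r_s$-measurable while $\xi_{1:r}(s)$ is independent of $\Gmc^r_s$ and absolutely continuous, and then pass from conditional to joint absolute continuity. The differences are cosmetic. You treat all $r$ coordinates at once rather than reducing to $r=1$ via the i.i.d.\ structure. You also carry out explicitly, by Fubini on sections of a null set, the final step that the paper leaves implicit. The paper writes down a conditional density of $Z\pcar{1}(t+1)$ given $\Gmc^1_t$ and then mixes over the finitely many $Y$-paths; your version does not need that mixing step.

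One caveat about your last paragraph. Integrability of $Z'(s)$ does not follow from integrability of $\xi$ and boundedness of $B_{s,k}$. The ratio in \eqref{eqn::Lidef} is a tilted average whose denominator can be arbitrarily small. For example, take $d=1$ and let $B_0(z_1,z_2,y_1,y_2,k)$ vanish unless $z_2\in[h(z_1),h(z_1)+1]$, with $h$ continuous and rapidly growing. Then $L\pcar{1}_k(0)\in[h(Z\pcar{1}(0)),h(Z\pcar{1}(0))+1]$, which need not be integrable. In the paper, integrability of the limit instead comes from Assumption \ref{assu::ui}, passed to the weak limit by Fatou. In any case your argument does not need it: it only needs $L\pcar{i}_k(s)$ to be an a.s.\ finite $\Gmc^r_s$-measurable random variable, which Definition \ref{defn::AZlim} already presupposes.
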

\begin{proof}
By Definition \ref{defn::AZlim}, $Z\pcar{1}[t],\dots,Z\pcar{r}[t]$ are i.i.d.. It therefore suffices to prove that $Z\pcar{1}[t]$ is absolutely continuous. To do this, we apply an inductive proof. If $t = 0$, then the result follows directly from Assumption \ref{assu::init}(a). 

\ind Suppose that for some $t\geq 0$ we know that $Z\pcar{1}[t]$ is absolutely continuous. We first show that the conditional distribution of $Z\pcar{1}(t+1)$ given $Z\pcar{1}[t]$ is absolutely continuous. By \eqref{eqn::Zevolve}, $Z\pcar{1}(t+1) - \xi_1(t)$ is $\Gmc^1_t$-measurable, so conditioned on $\Gmc^1_t$, it is constant. As stated after \eqref{eqn::mu2def}, $\xi_1(t)$ is independent of $\Gmc^1_t$, and by Assumption \eqref{assu::cond}, $\xi_1(t)$ is absolutely continuous. So, conditioned on $\Gmc^1_t$, $Z\pcar{1}(t+1)$ is the sum of a constant vector and an absolutely continuous random vector and is therefore absolutely continuous as well. This means there exists a measurable function $f: \R^d\times \mf_t(\R^d)\times \mf_t([1:q]) \to \R_+$ such that for any $B\in \borel(\R^d)$,
\[\PP\left(Z\pcar{1}(t+1)\in B\middle|\Gmc^1_t\right) = \int_B f(z;Z\pcar{1}[t],Y\pcar{1}[t])\,dz\te{ a.s.}.\]
Note that $Y\pcar{1}[t]$ is a discrete random element with a finite supporting set. We may therefore define a function $g: \mf_t([1:q])\times \mf_t(\R^d)\to [0,1]$ such that $g(y,Z\pcar{1}[t]) = \PP(Y\pcar{1}[t] = y|Z\pcar{1}[t])$ a.s.. Then,
\begin{align*}
\PP\left(Z\pcar{1}(t+1) \in B\middle|Z\pcar{1}[t]\right) &= \sum_{y \in \mf_t([1:q])} \PP\left(Z\pcar{1}(t+1)\in B, Y\pcar{1}[t] = y\middle|Z\pcar{1}[t]\right)\\
&= \sum_{y \in \mf_t([1:q])}\PP\left(Z\pcar{1}(t+1)\in B\middle|Z\pcar{1}[t],Y\pcar{1}[t] = y\right)\PP\left(Y\pcar{1}[t] = y\middle|Z\pcar{1}[t]\right)\\
&= \int_B \sum_{y \in \mf_t([1:q])}f(z;Z\pcar{1}[t],y)g(y;Z\pcar{1}[t])\,dz.
\end{align*}
This shows that the conditional distribution of $Z\pcar{1}(t+1)$ given $Z\pcar{1}[t]$ is absolutely continuous. This implies that $Z\pcar{1}[t+1]$ is absolutely continuous. The lemma follows by induction.
\end{proof}

\subsection{Proof of Theorem \ref{thm::Sampleconv}}
\label{ssec::mainpf}

In this section, we prove the main sampling theorem, Theorem \ref{thm::Sampleconv}. The proof advances in two parts. First, in Section \ref{sssec::prelje} we show that $(Z^n,Y^n,A^n)$ are \emph{jointly exchangeable} for all $n$. This essentially tells us that the joint distribution of $(Z^n,Y^n,A^n)$ is invariant with respect to permutations of the labelings of the particles. This condition is then leveraged in Section \ref{ssec::mainpffin} to prove the result. Our proof utilizes the conditional propagation of chaos theorem from \cite{GanSpiSus25} (and restated in Section \ref{sec::condpropchaos}) to construct an inductive proof of the result.

\subsubsection{Preliminary Result: Joint Exchangeability} 
\label{sssec::prelje}

Exchangeability is one of the most useful tools for establishing mean-field results. Here, we begin by showing that our process satisfies the following notion of \emph{joint exchangeability} that was also utilized in \cite{GanSpiSus25}:

\begin{definition}[Joint Exchangeability]
\label{def::jointexchangeability}
For any $n \in\N$, let $X = (X_1,\dots,X_n)$ be an $\Xmc^n$-random element, and let $M = (M_{ij})_{i,j=1}^n$ be an $n\times n$ random matrix with entries in $\Ymc$. Then we say that the pair $(X,M)\defeq (X_{1:n},M_{1:n,1:n})$ is \emph{jointly exchangeable} if and only if for any permutation $\sigma \in S_n$,
\[(X_{1:n},M_{1:n,1:n}) \deq (X_{\sigma(1:n)},M_{\sigma(1:n)\sigma(1:n)}).\]
\end{definition}

We now show that our interacting particle system is jointly exchangeable.

\begin{lemma}
\label{lem::jexch}
For any $n \in \N$ and $t \in \N_0$, the process $(Z^n[t],Y^n[t],A^n[t])$ is jointly exchangeable.
\end{lemma}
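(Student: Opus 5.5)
We argue by induction on $t$, using the sequential construction of the process: $Z^n(0)$, then $Y^n(0)$, then $A^n(0)$, then $Z^n(1)$, and so on. For a permutation $\sigma\in S_n$ write $X^\sigma$ for the relabelled object: $Z^{n,\sigma}_i = Z^n_{\sigma(i)}$, $Y^{n,\sigma}_i=Y^n_{\sigma(i)}$, $A^{n,\sigma}_{ij}=A^n_{\sigma(i)\sigma(j)}$. The goal is to show that the law of the triple $(Z^n[t],Y^n[t],A^n[t])$ is invariant under this relabelling. The joint law is built from four ingredients: the law of $Z^n(0)$, the conditional law of $Y^n(s)$ given $\Fmc^{Y,n}_s$, the conditional law of $A^n(s)$ given $\Fmc^{A,n}_s$, and the deterministic update \eqref{eqn::Znevolve} driven by i.i.d.\ noise independent of the past. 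It therefore suffices to check that each ingredient is permutation equivariant.

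\emph{Base case.} By Assumption \ref{assu::init}(a), $Z^n(0)$ is exchangeable. Given $Z^n(0)$, Assumption \ref{assu::init}(b) makes the $Y^n_i(0)$ conditionally independent with conditional law $P_0(Z^n_i(0),\cdot)$. The conditional probability of $Y^n(0)=\bm y$ is therefore $\prod_i P_0(Z^n_i(0),y_i)$, which is equivariant in the sense that relabelling $Z^n(0)$ and $\bm y$ by $\sigma$ leaves it unchanged. Likewise, the conditional probability of $A^n(0)=\bm a$ given $\Fmc^{A,n}_0$ is $\prod_{(i,j)\in\incs_n}B_0(Z^n_{\{i,j\}}(0),Y^n_{\{i,j\}}(0),a_{ij})$. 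Relabelling permutes the unordered pairs, and we will use that $B_0$ and $B$ are symmetric under swapping the two particles, as is implicit in the requirement $\PP(A^n_{ij}=a|\cdot)=\PP(A^n_{ji}=a|\cdot)$ in \eqref{eqn::Anevolve}. Together with $A^n_{ii}=0$, this shows the product is invariant. Integrating out, we get $(Z^n(0),Y^n(0),A^n(0))\deq(Z^n(0),Y^n(0),A^n(0))^\sigma$.

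\emph{Inductive step.} Assume the triple up to time $t$ is jointly exchangeable. We write $Z^n(t+1)=F(Z^n(t),Y^n(t),A^n(t))+\xi_{1:n}(t)$, where $F_i$ is given by \eqref{eqn::Znevolve}--\eqref{eqn::dAnt}. The key identity to verify is $F(Z^\sigma,Y^\sigma,A^\sigma)_i=F(Z,Y,A)_{\sigma(i)}$. This holds because $d_{A^{n,k}}(i)$ and $L^{n,k}_i$ are sums over all $j$ that are reindexed by $\sigma$; the convention $0/0=0$ is itself preserved under relabelling. By Assumption \ref{assu::cond}(a), $\xi_{1:n}(t)$ is i.i.d.\ and independent of $\Fmc^n_t$, so $(\Fmc^n_t\text{-data},\xi(t))\deq(\Fmc^n_t\text{-data}^\sigma,\xi(t)^\sigma)$. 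It follows that $(Z^n[t+1],Y^n[t],A^n[t])$ is jointly exchangeable. Next, the conditional law of $Y^n(t+1)$ given $\Fmc^{Y,n}_{t+1}$ is the product $\prod_i P(Z^n_i(t+1),Y^n_i(t),y_i)$ by \eqref{eqn::peqn} and Assumption \ref{assu::cond}(b). The conditional law of $A^n(t+1)$ given $\Fmc^{A,n}_{t+1}$ is $\prod_{(i,j)\in\incs_n}B(Z^n_{\{i,j\}}(t+1),Y^n_{\{i,j\}}(t+1),A^n_{ij}(t),a_{ij})$ by \eqref{eqn::Anevolve} and Assumption \ref{assu::cond}(c). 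Both kernels are equivariant exactly as in the base case. Writing the joint law as the law of the past times these kernels and changing variables by $\sigma$ completes the induction.

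The main obstacle is bookkeeping rather than a hard estimate: conditional kernels must be composed with permuted arguments, which requires care in expressing everything via regular conditional probabilities on the product spaces. The one substantive point is the pair-swap symmetry of $B_0$ and $B$. If this is not literally assumed, we would derive it from the consistency requirement in \eqref{eqn::Anevolve}, since the law of the symmetric matrix $A^n$ must be well defined whichever of $i,j$ is listed first.
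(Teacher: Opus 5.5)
Your proposal is correct and follows essentially the same route as the paper. Both argue by induction on $t$: the base case and the time-$(t+1)$ type and edge updates are handled by writing the conditional laws as the permutation-equivariant products $\prod_i P(\cdot)\prod_{(i,j)\in\incs_n}B(\cdot)$ and invoking the induction hypothesis. You additionally spell out the $Z$-update step (equivariance of the map in \eqref{eqn::Znevolve} plus independence of the i.i.d.\ noise), which the paper delegates to \cite{GanSpiSus25}.

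Your explicit appeal to pair-swap symmetry of $B_0$ and $B$ is also used tacitly by the paper, when it identifies $B_0(Z^n_{\sigma(i),\sigma(j)}(0),\dots)$ with the conditional law of $A^n_{\sigma(i)\sigma(j)}(0)$. This symmetry is really part of the model, encoded by the unordered notation $Z^n_{\{i,j\}}$. It cannot be derived from \eqref{eqn::Anevolve}, which is stated only for $i<j$, and without it joint exchangeability can fail.
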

\begin{proof}
We use an inductive proof. \\

\textbf{Base Case, $t=0$:} Recall from Section \ref{ssec::notation} that $S_n$ is the permutation group on the set $[1:n]$ and $\Fmc^{Y,n}_0 \defeq \sigma(Z^n(0))$. Let $\sigma \in S_n$ be any permutation. For $i \in [1:n]$, fix $A_i \in \borel(\R^d)$ and $y_i \in [1:q]$. Then,
\begin{align*}
\PP\left((Z^n_i(0),Y^n_i(0))_{i=1}^n \in \times_{i=1}^n (A_i\times \{y_i\})\right) &= \ex{\PP\left(Y^n(0) = y\middle|\Fmc^{Y,n}_0\right)\prod_{i=1}^n \indic{Z^n_i(0) \in A_i}}\\
&= \ex{\prod_{i=1}^n \indic{Z^n_i(0) \in A_i}\PP\left(Y^n_i(0) = y_i\middle|\Fmc^{Y,n}_0\right)}\\
&=\ex{\prod_{i=1}^n \indic{Z^n_i(0) \in A_i}P_0(Z^n_i(0),y_i)}\\
&=\ex{\prod_{i=1}^n \indic{Z^n_{\sigma(i)}(0) \in A_i}P_0(Z^n_{\sigma(i)}(0),y_i)}\\
&=\PP\left((Z^n_{\sigma(i)}(0),Y^n_{\sigma(i)}(0))_{i=1}^n \in \times_{i=1}^n (A_i\times \{y_i\})\right)
\end{align*}

This proves that $(Z^n_i(0),Y^n_i(0))_{i=1}^n$ is an exchangeable collection of random elements. A similar proof shows that $(Z^n,Y^n,A^n)$ is jointly exchangeable. That is, for $A_i \in \borel(\R^d\times [1:q])$, $i = 1,\dots,n$, and $a_{ij} \in [m], (i,j) \in \incs_n$, let $A = \times_{i=1}^n A_i$ and $a = \{a_{ij}\}_{i,j=1}^n$ with $a_{ij} = a_{ji}$ and $a_{ii} = 0$ for all $i,j$. Recall also that $\Fmc^{A,n}_0 = \sigma(Z^n(0),Y^n(0))$. Then,
\begin{align*}
\PP&\left((Z^n_{1:n}(0),Y^n_{1:n}(0),A^n_{1:n,1:n}(0))\in A\times \{a\}\right) = \ex{\PP\left(A^n(0) = a\middle|\Fmc^{A,n}_0\right)\prod_{i=1}^n \indic{(Z^n_i(0),Y^n_i(0)) \in A_i}}\\
&\ind =\ex{\prod_{(i,j) \in \incs_n} \PP\left(A^n_{ij}(0) = a_{ij}\middle|\Fmc^{A,n}_0\right)\prod_{k=1}^n \indic{(Z^n_k(0),Y^n_k(0)) \in A_k}}\\
&\ind = \ex{\prod_{(i,j) \in \incs_n} B_0\left(Z^n_{i,j}(0),Y^n_{i,j}(0),a_{ij}\right)\prod_{k=1}^n \indic{(Z^n_k(0),Y^n_k(0)) \in A_k}}\\
&\ind = \ex{\prod_{(i,j) \in \incs_n} B_0\left(Z^n_{\sigma(i),\sigma(j)}(0),Y^n_{\sigma(i),\sigma(j)}(0),a_{ij}\right)\prod_{k=1}^n \indic{(Z^n_{\sigma(k)}(0),Y^n_{\sigma(k)}(0)) \in A_k}}\\
&\ind =\PP\left((Z^n_{\sigma(1):\sigma(n)}(0),Y^n_{\sigma(1):\sigma(n)}(0),A^n_{\sigma(1):\sigma(n),\sigma(1):\sigma(n)}(0))\in A\times \{a\}\right).
\end{align*}

Thus, $(Z^n(0),Y^n(0),A^n(0))$ is jointly exchangeable.\\

\textbf{Inductive proof:} Now suppose that $(Z^n[t],Y^n[t],A^n[t])$ is jointly exchangeable. Then to prove Lemma \ref{lem::jexch}, it suffices to show that $(Z^n[t+1],Y^n[t+1],A^n[t+1])$ is jointly exchangeable. The proof that $(Z^n[t+1],Y^n[t],A^n[t])$ is jointly exchangeable is nearly identical to the proof of \cite[Lemmas A.9,7.7]{GanSpiSus25}, so it is omitted. We now show that $(Z^n[t+1],Y^n[t+1],A^n[t])$ is jointly exchangeable.  Fix any $n \in\N$. For each $(\bm{y},\bm{a}) \in [1:q]^n\times[m]^{n\times n}$, define
\begin{align*}
&\PP\left(Y^n(t+1) =\bm{y}, A^n_{ij}(t+1)=\bm{a}\middle|\Fmc^{Y,n}_{t+1}\right)\\
&\ind\ind = \PP\left(Y^n(t+1) = \bf{y}\middle|\Fmc^{Y,n}_{t+1}\right)\PP\left(A^n(t+1)=\bf{a}\middle|\Fmc^{A,n}_{t+1},Y^n(t+1)=\bf{y}\right)\\
&\ind\ind= \prod_{i=1}^n\PP\left(Y^n_{i}(t+1) = y_i\middle|\Fmc^{Y,n}_{t+1}\right)\prod_{(i,j) \in \incs_n}\PP\left(A^n_{ij}(t+1)=a_{ij}\middle|\Fmc^{A,n}_{t+1},Y^n(t+1)=\bf{y}\right)\\
&\ind\ind= \prod_{i=1}^n P(Z^n_i(t+1),Y^n_i(t),y_i)\prod_{(i,j) \in \incs{n}} B(Z^n_{ij}(t+1),y_{ij},A^n_{ij}(t),a_{ij})\\
&\ind\ind= \prod_{i=1}^n P(Z^n_i(t+1),Y^n_i(t),y_i)\prod_{(i,j) \in \incs{n}} B(Z^n_{ij}(t+1),y_{ij},A^n_{ij}(t),a_{ij}).
\end{align*}

Define the space $[m]^{n\times n}_{\te{sym0}}$ to be the space of $n$ by $n$ symmetric matrices with entries in $[m]$ and whose diagonal is the zero vector. It directly follows from the joint exchageability of $(Z^n[t+1],Y^n[t],A^n[t])$ that for any $g: \mf_{t+1}(\R^d)^n\times \mf_{t+1}([1:q])^n\times \mf_{t+1}([m])^{n\times n}$,\begin{align*}
&\ex{g(Z^n_{1:n}[t+1],Y^n_{1:n}[t+1],A^n_{1:n,1:n}[t+1])}\\
&= \ex{\ex{g(Z^n_{1:n}[t+1],Y^n_{1:n}[t+1],A^n_{1:n,1:n}[t+1])\middle|\Fmc^{Y,n}_{t+1}}}\\
&= \sum_{(\bf{y},\bf{a})\in [1:q]^n\times [m]^{n\times n}_{\te{sym0}}} \ex{g(Z^n_{1:n}[t+1],Y^n_{1:n}[t],y_{1:n},A^n_{1:n,1:n}[t],a_{1:n,1:n})\PP\left(Y^n(t+1) = \bf{y}, A^n(t+1) = \bf{a}\middle|\Fmc^{n,Y}_{t+1}\right)}\\
&= \sum_{(\bf{y},\bf{a})\in [1:q]^n\times [m]^{n\times n}_{\te{sym0}}} \mathbb{E}\Bigg[g(Z^n_{1:n}[t+1],Y^n_{1:n}[t],y_{1:n},A^n_{1:n,1:n}[t],a_{1:n,1:n})\prod_{i=1}^n P(Z^n_i(t+1),Y^n_i(t),y_i)\\
&\ind \ind \prod_{(i,j) \in \incs{n}} B(Z^n_{ij}(t+1),y_{ij},A^n_{ij}(t),a_{ij})\Bigg]\\
&= \sum_{(\bf{y},\bf{a})\in [1:q]^n\times [m]^{n\times n}_{\te{sym0}}} \mathbb{E}\Bigg[g(Z^n_{\sigma(1:n)}[t+1],Y^n_{\sigma(1:n)}[t],y_{1:n},A^n_{\sigma(1:n),\sigma(1:n)}[t],a_{1:n,1:n})\prod_{i=1}^n P(Z^n_{\sigma(i)}(t+1),Y^n_{\sigma(i)}(t),y_i)\\
&\ind \ind \prod_{(i,j) \in \incs{n}} B(Z^n_{\sigma(i)\sigma(j)}(t+1),y_{ij},A^n_{\sigma(ij)}(t),a_{ij})\Bigg]\\
&=\ex{g(Z^n_{\sigma(1:n)}[t+1],Y^n_{\sigma(1:n)}[t+1],A^n_{\sigma(1:n),\sigma(1:n)}[t+1])}.
\end{align*}
This confirms the joint exchangeability of $\left(Z^n[t+1],Y^n[t+1],A^n[t+1]\right)$.
\end{proof}

\subsubsection{Proof of Theorem \ref{thm::Sampleconv}}
\label{ssec::mainpffin}

Again, we take an inductive approach. \\

\textbf{Base case, $t = 0$:} Fix any $r \in\N$, and assume $n > r$. Fix any $\bm{a} \defeq (a_{ij})_{(i,j)\in \incs_r} \in [m]^{\incs_r}$ and $\bm{y} \defeq (y_i)_{i=1}^r \in [1:q]^r$. Then \eqref{eqn::Anevolve} and Assumption \ref{assu::cond}(b) imply 
\begin{align*}
&\PP\left(Y^n_{1:r}(0) = \bm{y},(A^n_{ij}(0))_{(i,j)\in \incs_r} = \bm{a}\middle|Z^n_{1:r}(0)\right) \\
&\ind = \ex{\PP\left(Y^n_{1:r}(0) = \bm{y},(A^n_{ij}(0))_{(i,j)\in \incs_r} = \bm{a}\middle|\Fmc^{Y,n}_0\right)\middle|Z^n_{1:r}(0)}\\ 
&= \ex{\prod_{(i,j)\in \incs_r}B_0\left(Z^n_{\{i,j\}}(0),y_{\{i,j\}},a_{ij}\right)\prod_{k=1}^r P_0(Z^n_k(0),y_k)\middle| Z^n_{[1:r]}(0)}\\
&= \prod_{(i,j)\in \incs_r}B_0\left(Z^n_{\{i,j\}}(0),y_{\{i,j\}},a_{ij}\right)\prod_{k=1}^r P_0(Z^n_k(0),y_k).
\end{align*}
Thus, the probability above is an a.e. continuous function of $Z^n_{1:r}(0)$. Now, by \cite[Proposition 2.2(i)]{Szn91}, 
\[Z^n_{1:r}(0) \Rightarrow Z\pcar{1:r}(0),\]
where $(Z\pcar{i}(0))_{i=1}^r$ are i.i.d.. Consider any bounded, continuous $f: (\R^d)^r\times [1:q]^r\times [m]^{\incs_r} \to \R$. Then,

\begin{align*}
\lim_{n\to\infty}& \ex{f\left(Z^n_{1:r}(0),Y^n_{1:r}(0),A^n_{1:r,1:r}(0)\right)} \\
&= \lim_{n\to\infty}\ex{\sum_{\substack{\bm{y} \in [1:q]^r\\ \bm{a} \in [m]^{r\times r}\te{ sym. }, \te{diag}(\bm{a}) = 0}} f(Z^n_{1:r}(0),\bm{y},\bm{a})\PP\left(Y^n_{1:r}(0) = \bm{y},A^n_{1:r,1:r}(0) = \bm{a}\middle|Z^n_{1:r}(0)\right)}\\
&= \lim_{n\to\infty}\ex{\sum_{\substack{\bm{y} \in [1:q]^r\\ \bm{a} \in [m]^{r\times r}\te{ sym. }, \te{diag}(\bm{a}) = 0}} f(Z^n_{1:r}(0),\bm{y},\bm{a})\prod_{(i,j)\in \incs_r}B_0\left(Z^n_{\{i,j\}}(0),y_{\{i,j\}},a_{ij}\right)\prod_{k=1}^r P_0(Z^n_k(0),y_k)}\\
&=\ex{\sum_{\substack{\bm{y} \in [1:q]^r\\ \bm{a} \in [m]^{r\times r}\te{ sym. }, \te{diag}(\bm{a}) = 0}} f(Z\pcar{1:r}(0),\bm{y},\bm{a})\prod_{(i,j)\in \incs_r}B_0\left(Z\pcar{i,j}(0),y_{\{i,j\}},a_{ij}\right)\prod_{k=1}^r P_0(Z\pcar{k}(0),y_k)}\\
&= \ex{f\left(Z\pcar{1:r}(0),Y\pcar{1:r}(0),A\pcar{1:r,1:r}(0)\right)}.
\end{align*}

This proves the theorem when $t = 0$.\\

\textbf{Inductive Claim I:} For each $n > r$, define
\begin{equation}
\label{eqn::altmu}
\alt{\mu}_t \defeq \frac{1}{n}\sum_{j=1}^n \delta_{Z^n_{1,j}[t],Y^n_{1,j}[t],A^n_{1j}(t)}\te{ and } \alt{\mu}_t \defeq \law\left(Z\pcar{1,2}[t],Y\pcar{1,2}[t],A\pcar{12}(t)\middle|Z\pcar{1}[t],Y\pcar{1}[t]\right).
\end{equation} 
Then $(Z^n_1[t],Y^n_1[t],\alt{\mu}^n_t)\Rightarrow (Z\pcar{1}[t],Y\pcar{1}[t],\alt{\mu}_t)$.\\

\begin{proof}[Proof of Inductive Claim 1]
The claim follows along the lines of  \cite[Lemma 7.9]{GanSpiSus25} when $q = m = 1$. We omit the full proof, and focus on the differences between the two proofs.

\ind The proof of \cite[Lemma 7.9]{GanSpiSus25} proceeds by applying a conditional propagation of chaos result, specifically, Proposition 5.5 of \cite{GanSpiSus25}, which we restate in Proposition \ref{prop::condconvres} for convenience. We take the same approach: define $\Xmc \defeq \mf_t(\R^d)\times\mf_t([1:q])$, $\Ymc \defeq \mf_t(\R^d)\times\mf_t([1:q])\times [m]$, $X \defeq (Z\pcar{1}[t],Y\pcar{1}[t])$, $Y \defeq (Z\pcar{2}[t],Y\pcar{2}[t],A\pcar{12}[t])$, $X^n \defeq (Z^n_1[t],Y^n_1[t])$, and $Y^n_i = (Z^n_i[t],Y^n_i[t],A^n_{1i}[t])$ for all $n \in \N, i \in [1:n]$. The proof that these random elements satisfy the conditions of Proposition \ref{prop::condconvres} is nearly identical to the analogous statement in the proof of Lemma 7.9 of \cite{GanSpiSus25} with one exception. To establish the continuous dependence of the law $\eta \defeq \law(X,Y|X)$ on $X$ requires a little extra work in our setting.

\ind It suffices to show that for each $a \in [m]$, the conditional probability $\PP(A\pcar{12}(t) = a|\Gmc^2_t)$ is an a.s. continuous function of $(Z\pcar{1,2}[t],Y\pcar{1,2}[t])$. We note by Corollary \ref{coro::morestruct}, \eqref{eqn::Bevolve}, and \eqref{eqn::Atprob} that
\begin{align*}
&\PP\left(A\pcar{12}(t) = a\middle|\Gmc^2_t\right) = B_{t,a}(Z\pcar{1,2}[t],Y\pcar{1,2}[t]) \\
&\quad = \sum_{\substack{\bm{a} \in \mf_t([m])\\ \bm{a}(t) = a}} B_0(Z\pcar{1,2}(0),Y\pcar{1,2}(0),\bm{a}(0))\prod_{s=1}^t B(Z\pcar{1,2}(s),Y\pcar{1,2}(s),\bm{a}[s-1:s])\te{ a.s..}
\end{align*}
$f_a \defeq B_{t,a}$ is therefore a.s. continuous for each $a \in [m]$. Then, just as in the proof in \cite{GanSpiSus25}, we may show that $\eta$ continuously depends on $X$ by applying \cite[Lemma A.1]{GanSpiSus25}. And the result follows from an application of Proposition \ref{prop::condconvres} (or equivalently, \cite[Proposition 5.5]{GanSpiSus25}) as is done to prove Lemma 7.9 of \cite{GanSpiSus25}.
\end{proof}

\textbf{Inductive Claim II:} 
\begin{align*}
&\left(Z^n_1[t],Y^n_1[t],\left(\frac{1}{n}\sum_{j=1}^n Z^n_j(t)\indic{A^n_{1j}(t) = k},\frac{1}{n}\sum_{j=1}^n\indic{A_{1j}(t) = k}\right)_{k=1}^m\right) \\
&\quad \Rightarrow \left(Z\pcar{1}[t],Y\pcar{1}[t],\left(\ex{Z\pcar{2}[t]\indic{A\pcar{12}(t)=k}\middle|Z\pcar{1}[t],Y\pcar{1}[t]},\ex{\indic{A\pcar{12}(t)=k}\middle|Z\pcar{1}[t],Y\pcar{1}[t]}\right)_{k=1}^n\right).
\end{align*}

\begin{proof}[Proof of Inductive Claim II]
This proof is identical to the first part of the proof of \cite[Lemma 7.10]{GanSpiSus25} up to notational changes. 
\end{proof}

\textbf{Inductive Claim III:} 
\begin{align*}
&\left(Z^n_1[t],Y^n_1[t],L^{n,1:m}_1(t)\right) \Rightarrow \left(Z\pcar{1}[t],Y\pcar{1}[t],L\pcar{1}_{1:m}(t)\right).
\end{align*}
\begin{proof}[Proof of Inductive Claim III]
In \cite{GanSpiSus25}, this claim followed from Claim II using a simple argument that the denominator of $L\pcar{1}(t)$ is a.s. positive, so that the result follows from the continuous mapping theorem. In our setting, it is possible for the denominator of $L\pcar{1}_k(t)$ to be zero (in which case the numerator is also $0$, so $L\pcar{1}_k(t) = 0$ via our convention that $0/0=0$). So, although we still prove this claim by applying the continuous mapping theorem, the argument is more involved and requires the use of two technical lemmas (Lemmas \ref{lem::vanishbound} and \ref{lem::posdenom}) from Section \ref{ssec::tech}.

\ind In Section \ref{ssec::tech}, Lemma \ref{lem::vanishbound} establishes the existence of a set $\Emc\in \borel\left(\mf_t(\R^d\times[1:q])\right)$ such that
\begin{equation}
\label{eq::Emczero}
\PP\left(A\pcar{12}=k\middle|\Gmc^1_t\right)\indic{(Z\pcar{1}[t],Y\pcar{1}[t]) \in \Emc} = 0 \te{ a.s.,}
\end{equation}
\[\PP\left(\PP\left(A\pcar{12}=k\middle|\Gmc^1_t\right)= 0,(Z\pcar{1}[t],Y\pcar{1}[t]) \in \Emc^c\right) = 0,\]
and such that the boundary $\partial \Emc$ is $(Z\pcar{1}[t],Y\pcar{1}[t])$-null. In simple terms, $\Emc$ is the maximal set such that \eqref{eq::Emczero} holds (up to null sets). For each $k$, let $f_k, g_k: \mf_t(\R^d)^2\times\mf_t ([1:q])^2\times [m] \to \R$ be defined by $f_k(x_{1,2},a) \defeq z_1\indic{a=k}$ and $g_k(x_{1,2},a) \defeq \indic{a=k}$ for all $x_i = (z_i,y_i) \in \mf_t(\R^d)\times \mf_t([1:q])$ and $a \in [m]$. Recall the definitions of $\alt{\mu}^n_t$ and $\alt{\mu}_t$ from \eqref{eqn::altmu}. We simplify the notation above by introducing the following quantities for each $n \in \N$ and $k\in [1:m]$:
\begin{align*}
N^{n,k} \defeq \langle\alt{\mu}^n_t,f_k\rangle, \quad N\pcar{k} \defeq \langle \alt{\mu}_t,f_k\rangle,\quad
D^{n,k} \defeq \langle \alt{\mu}^n_t,g_k\rangle, \quad
D\pcar{k} \defeq \langle \alt{\mu}_t,g_k\rangle.
\end{align*}
In addition, consider the following function $\psi: \mf_t(\R^d\times[1:q]) \times \R \to \R$ defined by
\[\psi(x,d) = \begin{cases}
d &\te{ if } x \notin \Emc,\\
1&\te{ otherwise,}
\end{cases} = d\indic{x \notin \Emc} + \indic{x \in \Emc}.\]

By Inductive Claim II, if $X^n = (Z^n_1[t],Y^n_1[t])$ and $X = (Z\pcar{1}[t],Y\pcar{1}[t])$,
\begin{equation}
\label{eqn::ZsZAZAconvergev2}
(X^n,(N^{n,k},D^{n,k})_{k=1}^m) \Rightarrow (X,(N\pcar{k},D\pcar{k})_{k=1}^m).
\end{equation}
Second, by Lemma \ref{lem::posdenom} of Section \ref{ssec::tech} and by the definition of $D^{n,k}$,
\begin{equation}
\label{eqn::Dnkzero}
D^{n,k}\indic{X^n\in \Emc} = \langle \alt{\mu}^n_t,f_k\rangle\indic{X^n\in \Emc}= d_{A^{n,k}(t)}(1)\indic{(Z^n_1[t],Y^n_1[t])\in \Emc} = 0\te{ a.s..}
\end{equation}
Third, by definition of $\Emc$,
\begin{equation}
\label{eqn::Dknonzero}
\PP\left(D\pcar{k}=0,X^n \in \Emc^c\right) = \PP\left(\PP\left(A\pcar{12}(t) = k\middle|\Gmc^1_t\right)=0,(Z\pcar{1}[t],Y\pcar{1}[t]) \in \Emc^c\right) = 0.
\end{equation}
Fourth, given that $N^{n,k} = 0$ when $D^{n,k} = 0$ and recalling our convention that $0/0 = 0$, it follows that
\begin{equation}
\label{eqn::NnDnNnpsiDn}
\frac{N^{n,k}}{\psi(X^n,D^{n,k})} = \frac{N^{n,k}}{D^{n,k}} \te{ a.s. for }k \in [1:m].
\end{equation}

Similarly, \eqref{eq::Emczero} and \eqref{eqn::Dknonzero} imply
\[\psi(X,D\pcar{k}) = \begin{cases}
D\pcar{k} &\te{ if } D\pcar{k} > 0,\\
1 & \te{ if } D\pcar{k}=0.
\end{cases}.\]
Therefore, because $N\pcar{k} =0$ when $D\pcar{k} =0$, our $0/0=0$ convention implies the following:
\begin{equation}
\label{eqn::NDNpsiD}
\frac{N\pcar{k}}{\psi(X,D\pcar{k})} = \frac{N\pcar{k}}{D\pcar{k}}\te{ a.s. for all }k.
\end{equation}
Lastly, by Lemma \ref{lem::vanishbound}, $\PP\left(X \in \partial \Emc\right) = 0$ which makes the following mapping $(X,(N\pcar{k},D\pcar{k})_{k=1}^m)$-a.s. continuous:
\[\mf_t(\R^d,[1:q])\times \R^m\times \R^m \ni (x,(n_k,d_k)_{k=1}^m) \mapsto \left(x,\left(\frac{n_k}{\psi(x,d_k)}\right)_{k=1}^m\right) \in \mf_t(\R^d,[1:q])\times\R^m.\]
Combining \eqref{eqn::ZsZAZAconvergev2}-\eqref{eqn::NDNpsiD} with the a.s. continuity of the above mapping, the continuous function theorem implies
\begin{align*}
\left(X^n,\left(\frac{N^{n,k}}{D^{n,k}}\right)_{k=1}^m\right) = \left(X^n,\left(\frac{N^{n,k}}{\psi(X^n,D^{n,k})}\right)_{k=1}^m\right) \Rightarrow \left(X,\left(\frac{N\pcar{k}}{\psi(X,D\pcar{k})}\right)_{k=1}^m\right) = \left(X,\left(\frac{N\pcar{k}}{D\pcar{k}}\right)_{k=1}^m\right) \te{ a.s..}
\end{align*}
Hence, 
\begin{align*}
&\left(Z^n_1[t],Y^n_1[t],\left(\frac{\frac{1}{n}\sum_{j=1}^n Z^n_j(t)\indic{A^n_{1j}(t) = k}}{\frac{1}{n}\sum_{j=1}^n \indic{A^n_{1j}(t)=k}}\right)_{k=1}^m\right)\\
&\ind\ind \Rightarrow \left(Z\pcar{1}[t],Y\pcar{1}[t],\left(\frac{\ex{Z\pcar{2}[t]\indic{A\pcar{12}(t)=k}\middle|Z\pcar{1}[t],Y\pcar{1}[t]}}{\ex{\indic{A\pcar{12}(t)=k}\middle|Z\pcar{1}[t],Y\pcar{1}[t]}}\right)_{k=1}^m\right).
\end{align*}

Recall the definition of the function $B_{s,a}: (\mf_t(\R^d\times[1:q]))^2\to [0,1]$, $s \in [t], a \in [m]$. Then,
\begin{align*}
N\pcar{k} &= \ex{Z\pcar{2}\indic{A\pcar{12}(t)=k}\middle|\Gmc^1_t} \\
&= \ex{Z\pcar{2}\ex{\indic{A\pcar{12}(t)=k}\middle|\Gmc^2_t}\middle|\Gmc^1_t} \\
&= \ex{Z\pcar{2}B_{t,k}(Z\pcar{1,2}[t],Y\pcar{1,2}[t])\middle|Z\pcar{1}[t],Y\pcar{1}[t]}.
\end{align*}
Likewise,
\[D\pcar{k} = \ex{B_{t,k}(Z\pcar{1,2}[t],Y\pcar{1,2}[t])\middle|Z\pcar{1}[t],Y\pcar{1}[t]}.\]
So for $k \in [1:m]$, $\frac{N\pcar{k}}{D\pcar{k}} = L\pcar{1}_k$ a.s.. Hence,
\[\left(Z^n_1[t],Y^n_1[t],L^{n,1:m}_1(t)\right) \Rightarrow \left(Z\pcar{1}[t],Y\pcar{1}[t],L\pcar{1}_{1:m}(t)\right).\]
\end{proof}

\textbf{Inductive Claim IV:} 
\[\left(Z^n_{1:r}[t],Y^n_{1:r}[t],A^n_{1:r,1:r}[t],L^{n,1:m}_{1:r}(t)\right) \Rightarrow \left(Z\pcar{1:r}[t],Y\pcar{1:r}[t],A\pcar{1:r,1:r}[t],L\pcar{1:r}_{1:m}(t)\right).\]
\begin{proof}[Proof of Inductive Claim IV]
By joint exchangeability of $(Z^n,Y^n,A^n)$, the distribution of $(Z^n_i[t],Y^n_i[t],L^{n,1:m}_i(t))$ does not depend on the choice of $i$. Likewise, as a consequence of Definition \ref{defn::AZlim}, the distribution of $(Z\pcar{i}[t],Y\pcar{i}[t],L\pcar{i}_{1:m}(t))$ does not depend on $i$. Therefore, for each $i \in [1:r]$, Inductive Claim III implies that
\begin{equation}
\label{eqn::ZYLiconv}
\left(Z^n_i[t],Y^n_i[t],L^{n,1:m}_i(t)\right) \Rightarrow \left(Z\pcar{i}[t],Y\pcar{i}[t],L\pcar{i}_{1:m}(t)\right).
\end{equation}

Then we prove the result using the conditional Slutzky's theorem as defined in \cite[Lemma A.2]{GanSpiSus25}. In the notation of that theorem, let $\Xmc = \mf_t(\R^d)\times\mf_t([1:q])\times \mf_t([m])^{r\times r}$. Let $\Ymc = (\R^d)^m$. Define $k = r+1$. For $n \in \N$ and $i \leq r$ let $X^n_i = (Z^n_i[t],Y^n_i[t])$. Let $X^n_{r+1} = A^n_{1:r,1:r}[t]$. For $n \in \N$ and $i \leq r$, define $Y^n_i = L^{n,1:m}_i(t)$. Set $Y^n_{r+1} \defeq \bm{0}$. Lastly, define $X_i = (Z\pcar{i}[t],Y\pcar{i}[t])$, $Y_i = L\pcar{i}_{1:m}(t)$ for $i \leq r$ and $X_{r+1} \defeq A\pcar{1:r,1:r}[t]$ and $Y_{r+1} \defeq \bm{0}$. Then by \eqref{eqn::ZYLiconv}, $(X^n_i,Y^n_i)\Rightarrow (X_i,Y_i)$ for $i\leq r$. By assumption, $(Z^n_{1:r}[t],Y^n_{1:r}[t],A^n_{1:r,1:r}[t])\Rightarrow (Z\pcar{1:r}[t],Y\pcar{1:r}[t],A\pcar{1:r,1:r}[t])$ which implies $(X^n_{r+1},Y^n_{r+1}) \Rightarrow (X_{r+1},Y_{r+1})$ and $X^n_{1:r+1} \Rightarrow X_{1:r+1}$. Lastly, the random vectors $L\pcar{i}_{1:m}(t)$ are $\sigma(Z\pcar{i}[t],Y\pcar{i}[t])$-measurable, so for $i \leq r$, there exists a measurable function $\phi_i$ such that $\phi_i(X_i) = Y_i$. For $i = r+1$, we may simply set $\phi_{r+1} \equiv \bm{0}$ which also yields $\phi_{r+1}(X_{r+1}) = \bm{0} = Y_{r+1}$. This completes the verification of the conditions of \cite[Lemma A.2]{GanSpiSus25}, so
\begin{align*}
\left(Z^n_{1:r}[t],Y^n_{1:r}[t],A^n_{1:r,1:r}[t],L^{n,1:m}_{1:r}(t),\bm{0}\right) &= (X^n_{1:r+1},Y^n_{1:r+1}) \\
&\Rightarrow (X_{1:r+1},Y_{1:r+1}) \\
&= \left(Z\pcar{1:r}[t],Y\pcar{1:r}[t],A\pcar{1:r}[t],L\pcar{1:r}_{1:m}(t),\bm{0}\right).
\end{align*}
This completes the proof of the lemma.
\end{proof}

\textbf{Inductive Claim V:} 
\[\left(Z^n_{1:r}[t+1],Y^n_{1:r}[t],A^n_{1:r,1:r}[t]\right)\Rightarrow \left(Z\pcar{1:r}[t+1],Y\pcar{1:r}[t],A\pcar{1:r,1:r}[t]\right).\]
\begin{proof}[Proof of Inductive Claim V]
$\xi_{1:r}(t)$ is independent of $(Z^n_{1:r}[t],Y^n_{1:r}[t],A^n_{1:r,1:r}[t],L^{n,1:m}_{1:r}(t))$. It is also independent of $(Z\pcar{1:r}[t],Y\pcar{1:r}[t],A\pcar{1:r}[t],L\pcar{1:r}_{1:m}(t))$. It follows that
\[\left(Z^n_{1:r}[t],Y^n_{1:r}[t],A^n_{1:r,1:r}[t],L^{n,1:m}_{1:r}(t),\xi_{1:r}(t)\right) \Rightarrow  \left(Z\pcar{1:r}[t],Y\pcar{1:r}[t],A\pcar{1:r}[t],L\pcar{1:r}_{1:m}(t),\xi_{1:r}(t)\right).\]
The result follows by the continuous mapping theorem.
\end{proof}

\textbf{Inductive Claim VI:} 
\[\left(Z^n_{1:r}[t+1],Y^n_{1:r}[t+1],A^n_{1:r,1:r}[t]\right)\Rightarrow \left(Z\pcar{1:r}[t+1],Y\pcar{1:r}[t+1],A\pcar{1:r,1:r}[t]\right).\]
\begin{proof}[Proof of Inductive Claim VI]

\ind We can apply  \cite[Lemma A.3]{GanSpiSus25} described above. Let $\Xmc = (\mf_{t+1}(\R^d))^r\times (\mf_t([1:q])^r\times \left(\mf_t\left([m]\right)\right)^{r\times r}$. For each $n$ let $X_n \defeq (Z^n_{1:r}[t+1],Y^n_{1:r}[t],A^n_{1:r,1:r}[t])$ be an $\Xmc$-random element. In addition, let $X \defeq (Z\pcar{1:r}[t+1],Y\pcar{1:r}[t],A\pcar{1:r,1:r}[t])$ be an $\Xmc$-random element and note that $\sigma(X) = \Gmc^{Y,r}_{t+1}$. Then by the above display, $X_n \Rightarrow X$, and $\sigma(X_n) = \sigma\left(Z^n_{1:r}[t+1],Y^n_{1:r}[t],A^n_{1:r,1:r}[t]\right)\subset \Fmc^{Y,n}_{t+1}$. For each $i \in [1:r]$, let $B^n_i = Y^n_i(t+1)$ be a $[1:q]$-random variable. Then by Assumption \ref{assu::cond}(b), $\{B^n_i\}_{i=1}^r$ is a conditionally independent sequence of random variables given $\Fmc^{Y,n}_{t+1}$. Given that $\sigma(X_n)\subset \Fmc^{Y,n}_{t+1}$ and for each $i$ and $y$,
\[\PP(B^n_i=y|\Fmc^{Y,n}_{t+1}) = P(Z^n_{i}(t+1),Y^n_i(t),y)\]
is $\sigma(X_n)$-measurable,  \cite[Lemma A.5]{GanSpiSus25} above implies that that $\{B^n_i\}_{i \in [1:r]}$ is a conditionally independent sequence of random variables given $X_n$ for each $n$. Moreover, 
\[\phi^a_i(X_n) \defeq P\left(Z^n_{i}(t+1),Y^n_i(t),a\right),\]
is bounded and a.e. continuous by definition. This implies that $\phi^a_i$ is also $X$-a.s. continuous by Lemma \ref{lem::abscont}. Therefore all the properties of  \cite[Lemma A.3]{GanSpiSus25} are satisfied, so
\[(X^n,B^n_{1:r}) \Rightarrow (X,B\pcar{1:r}),\]
where $\{B\pcar{i}\}_{i=1}^{r}$ are mutually conditionally independent given $\sigma(X) = \Gmc^{Y,r}_{t+1}$ and 
\[\PP\left(B\pcar{i}=a|X\right) = \PP\left(B\pcar{i}=a\middle|\Gmc^{A,r}_{t+1}\right) = P\left(Z\pcar{i}(t+1),Y\pcar{i}(t),a\right).\]

By Definition \ref{defn::AZlim} and Proposition \ref{prop::welldef}, it follows that
\begin{align*}
\left(Z\pcar{1:r}[t+1],Y\pcar{1:r}[t],A\pcar{1:r,1:r}[t],B\pcar{1:r}\right)\deq \left(Z\pcar{1:r}[t+1],Y\pcar{1:r}[t+1],A\pcar{1:r,1:r}[t]\right).
\end{align*}
This completes the proof.
\end{proof}

\textbf{Final Inductive Claim:}
\[\left(Z^n_{1:r}[t+1],Y^n_{1:r}[t+1],A^n_{1:r,1:r}[t+1]\right)\Rightarrow \left(Z\pcar{1:r}[t+1],Y\pcar{1:r}[t+1],A\pcar{1:r,1:r}[t+1]\right).\]
\begin{proof}[Proof of the Final Inductive Claim]
Again, this argument is the same as the proof of Property A(c) holding at time $t+1$ given Lemma 7.12 found on page 44 of \cite{GanSpiSus25}.
\end{proof}

Combining the seven inductive claims, we get that $(Z^n_{1:r}[t],Y^n_{1:r}[t],A^n_{1:r,1:r}[t]) \Rightarrow (Z\pcar{1:r}[t],Y\pcar{1:r}[t],A\pcar{1:r,1:r}[t])$ implies $(Z^n_{1:r}[t+1],Y^n_{1:r}[t+1],A^n_{1:r,1:r}[t+1]) \Rightarrow (Z\pcar{1:r}[t+1],Y\pcar{1:r}[t+1],A\pcar{1:r,1:r}[t+1])$. Then Theorem \ref{thm::Sampleconv} follows by induction.

\subsection{Technical Lemmas}
\label{ssec::tech}
The assumptions of \cite{GanSpiSus25} ensured that all nodes in the network interacted with $\theta(n)$ other nodes, which ensured that the denominator of the averaging term in the interacting particle system never approached 0 as $n \to \infty$. This was a reasonable assumption to make in the context that there was only one type of interaction in the system. In our paper, we allow for $m \geq 1$, so it is natural that we must consider the case that some nodes may have no interactions of a given type $k \in [1:m]$ at time $t$. For any such node $i$, $L^{n,k}_i(t)$ would have a denominator of zero, and in the limit, we expect the denominator of $L\pcar{i}_k(t)$ to vanish with some positive probability. 

\ind In this section, we provide a few technical lemmas that help control this case. We begin with a Lemma \ref{lem::support} showing that the information about two particles in the $n$-particle system and in the limiting mean-field system have the same support.

\begin{lemma}
\label{lem::support}
For every $n \geq 2$, the random quantities $V[t] \defeq (Z\pcar{1,2}[t],Y\pcar{1,2}[t],A\pcar{12}[t])$ and $V^n[t]\defeq (Z^n_{1,2}[t],Y^n_{1,2}[t],A^n_{12}[t])$ are both supported on the set
\begin{align}
S_t = &\Bigg\{(\bm{z}_{1,2},\bm{y}_{1,2},\bm{a}) \in \mf_t(\R^d)^2\times \mf_t ([1:q])^2\times\mf_t([m]): \nonumber\\
&\ind\ind B_0(z_{1,2}(0),y_{1,2}(0),a(0))\left(\prod_{i=1}^2 P_0(z_i(0),y_i(0))\right)\nonumber\\
&\ind\ind \left(\prod_{s=1}^t B(z_{1,2}(s),y_{1,2}(s),a[s-1:s])\prod_{i=1}^2 P(z_i(s),y_i[s-1:s])\right) > 0\Bigg\}.\label{eqn:S_t}
\end{align}

Furthermore, if we define $\ell_t$ to be the product of the Lebesgue measure on $\mf_t((\R^d))^2$ and the counting measure on the finite set $\mf_t([1:q])^2\times\mf_t([m])$, then the respective laws of $V[t]$ and $V^n[t]$ are both absolutely continuous with respect to $\ell_t$, and their densities are strictly positive on $S_t$.
\end{lemma}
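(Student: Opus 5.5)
We argue by induction on $t$, treating the limiting triple $V[t]$ and the $n$-particle triple $V^n[t]$ in parallel. For each we construct an explicit density with respect to $\ell_t$ and show that it equals a strictly positive factor times the product in \eqref{eqn:S_t}. This gives absolute continuity, support contained in $S_t$, and positivity of the density on $S_t$ all at once.

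\textbf{Base case $t=0$.} By Assumption \ref{assu::init}(a), $Z^n_{1,2}(0)$ and $Z\pcar{1,2}(0)$ are absolutely continuous with full support. For the limit, the density is $\mu_0\otimes\mu_0$. For the $n$-particle system we take the marginal density $\rho^n_0$ of the pair. By Assumption \ref{assu::init}(b),(c), conditioning on the types and the edge yields
\[\rho^n_0(z_{1,2})\,B_0(z_{1,2},y_{1,2},a)\prod_{i=1}^2P_0(z_i,y_i),\]
and the analogous expression holds for $V[0]$. A small point is that conditioning on $\Fmc^{Y,n}_0=\sigma(Z^n(0))$ involves all $n$ particles. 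Towering down to $Z^n_{1,2}(0)$ still leaves the factors $P_0(Z^n_i(0),\cdot)$ and $B_0$ intact, because they depend only on particles $1,2$. ``Full support'' has to be read as an a.e.\ positive density. If only support in the topological sense is available, I will assume the a.e.-positive version, since the induction otherwise cannot close.

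\textbf{Inductive step.} Suppose $V[t]$ has density $\rho_t=c_t\cdot\Pi_t$ on $S_t$, where $\Pi_t$ is the product in \eqref{eqn:S_t} and $c_t>0$. The step from $t$ to $t+1$ proceeds in three layers.
\begin{enumerate}[label=(\roman*)]
\item Given $\Tmc^2_t$ (or $\Fmc^n_t$), we have $Z(t+1)=\text{(measurable drift)}+\xi(t+1)$. The noise is independent, absolutely continuous, and has an a.e.\ positive density. So the conditional density of $Z_{1,2}(t+1)$ is $p_\xi(z_1-m_1)\,p_\xi(z_2-m_2)>0$.
\item The types at time $t+1$ contribute the factor $\prod_i P(z_i(t+1),y_i[t:t+1])$, by \eqref{eqn::Yevolve} or \eqref{eqn::peqn}.
\item The edge at time $t+1$ contributes $B(z_{1,2}(t+1),y_{1,2}(t+1),a[t:t+1])$, by \eqref{eqn::Aevolve} or \eqref{eqn::Anevolve}.
\end{enumerate}
Multiplying these gives $\rho_{t+1}=c_{t+1}\Pi_{t+1}$, where $c_{t+1}$ is $c_t$ times the noise densities.

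For the limit this step is clean. The drifts $L\pcar{i}_k$ are $\Gmc^2_t$-measurable, so conditioning on $V[t]$ loses nothing (cf.\ Lemma \ref{lem::futind}). For $V^n$, the main obstacle is that the drifts $L^{n,k}_i(t)$ depend on all $n$ particles, so $V^n[t]$ is not Markov on its own. To handle this, I would first condition on the full $\Fmc^n_t$. There the conditional density of the next state of particles $1,2$ has the product form (i)–(iii), with a positive noise factor depending on the whole configuration, and the $P$ and $B$ factors depending only on particles $1,2$. I would then integrate out the other particles using the conditional-independence statements in Assumption \ref{assu::cond}. Integrating a positive quantity produces a new factor $\tilde c_{t+1}(v)$, which is a conditional expectation of products of noise densities given $V^n[t+1]$-type information. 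This factor is strictly positive a.e., while the $\Pi_{t+1}$ factor comes out exactly.

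Positivity is the one delicate point. It follows because the integrand is an a.e.\ positive noise density times the positive remaining factors, so the conditional expectation of a positive variable is positive. The proof then concludes: off $S_t$ the density vanishes, which gives support in $S_t$, and on $S_t$ it is $c_t\Pi_t>0$.
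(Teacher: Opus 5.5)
Your proposal is correct and follows essentially the same route as the paper. Both argue by induction on $t$ and write the density of $V[t]$ (resp.\ $V^n[t]$) as the initial $Z$-density, times noise-density factors, times the $B_0,P_0,B,P$ product. For $V^n$, the noise factor becomes $\ex{f_{\xi,s}\left(z(s)-(Z^n_{1,2}(s)-\xi_{1,2}(s))\right)\middle|V^n[s-1]=v[s-1]}$, exactly as in \eqref{eqn::fnzsdef} and Appendix \ref{sec::fnzsjust}.

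The only slip is your phrase ``given $V^n[t+1]$-type information'': the conditional expectation is taken given the past $V^n[t]$. Its positivity then follows from $f_{\xi,s}>0$, which is the same reading of ``full support'' that the paper uses.
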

\begin{proof}
For any $i \in \{1,2\}$ and $n \geq 2$, Assumption \ref{assu::init}(a) implies that both $Z\pcar{1,2}(0)$ and $Z^n_{1,2}(0)$ are absolutely continuous and of full measure. Label their respective density functions by $f_{z,0}$ and $f^n_{z,0}$ and note that both density functions are strictly positive. Then by Assumption \ref{assu::init}(b) and (c), $V(0)$ and $V^n(0)$ are both absolutely continuous with respect to $\ell_0$ with respective densities:
\begin{align}
f_0(v) &= f_{z,0}(z_{1,2})B_0(z_{1,2},y_{1,2},a)\prod_{i=1}^2 P_0(z_i,y_i),\nonumber\\
f^n_0(v) &= f^n_{z,0}(z_{1,2})B_0(z_{1,2},y_{1,2},a)\prod_{i=1}^2 P_0(z_i,y_i)\label{eqn::V0dense}
\end{align}
for $v=(z_{1,2},y_{1,2},a) \in (\R^d)^2\times ([1:q])^2\times[m]$.

\ind Now assume that for some $s > 0$, $V[s-1]$ and $V^n[s-1]$ are both absolutely continuous with respect to $\ell_{s-1}$ and with respective density functions $f_{s-1}$ and $f^n_{s-1}$. By \eqref{eqn::Zevolve}-\eqref{eqn::mu2def} and \eqref{eqn::Znevolve}-\eqref{eqn::dAnt}, $(Z\pcar{1,2}(s) - \xi_{1,2}(s))$ and $(Z^n_{1,2}(s)-\xi_{1,2}(s))$ are respectively $\Tmc^2_{s-1}$ and $\Fmc^n_{s-1}$-measurable random vectors in $(\R^d)^2$. Therefore, there exists a measurable function $\phi_s: \mf_t(\R^d)^2\times\mf_t([1:q])^2\times\mf_t([m]) \to (\R^d)^2$ such that,
\[\phi_s(V[s-1]) = Z\pcar{1,2}(s)-\xi_{1,2}(s)\te{ a.s..}\]
Recall from Assumption \ref{assu::cond} and the paragraph after \eqref{eqn::mu2def} that $\xi_{1,2}(s)$ is absolutely continuous, of full support and independent of both $\Tmc^2_{s-1} \defeq \sigma(V[s-1])$ and $\Fmc^n_{s-1} \defeq \sigma(Z^n[s-1],Y^n[s-1],A^n[s-1])$. Let $f_{\xi,s}$ denote the density of $\xi_{1,2}(s)$. Then the density of $(Z\pcar{1,2}(s),V[s-1])$ with respect to the measure $\lambda\times\ell_{s-1}$ (where $\lambda$ is the Lebesgue measure on $(\R^d)^2$) is given by
\begin{align*}
f_{z,s}(z'_{1,2},v[s-1]) &= f_{s-1}(v[s-1])f_{\xi,s}(z'_{1,2} - \phi_s(v[s-1])),
\end{align*}
for $v[s-1] \in \mf_{s-1}(\R^d)^2\times \mf_{s-1}([1:q])^2\times \mf_{s-1}([m])$ and $z'_{1,2} \in (\R^d)^2$. Finding the density of $v^n[s-1]$ is slightly more complex as $Z^n_{1,2}(s) - \xi^{1,2}(s)$ is not measurable with respect to $\sigma(V^n[s-1])$. However, this density can still by found by taking an appropriate conditional expectation:
\begin{equation}
\label{eqn::fnzsdef}
f^n_{z,s}(z'_{1,2},v[s-1]) = \ex{f^n_{s-1}(v[s-1])f_{\xi,s}(z'_{1,2} - (Z^n_{1,2}(s)-\xi^{1,2}(s)))\middle|V^n[s-1] = v[s-1]},
\end{equation}
where once more $v[s-1] \in \mf_{s-1}(\R^d)^2\times \mf_{s-1}([1:q])^2\times \mf_{s-1}([m])$ and $z'_{1,2} \in (\R^d)^2$. See Appendix \ref{sec::fnzsjust} for a derivation of the above density function.

\ind Applying \eqref{eqn::peqn}, \eqref{eqn::Anevolve}, \eqref{eqn::Yevolve} and \eqref{eqn::Aevolve}, we find that the respective densities $f_s$ and $f^n_s$ are given by
\begin{align}
f_s(v[s]) &= f_{s-1}(v[s-1])f_{\xi,s}(z_{1,2}(s) - \phi_s(v[s-1]))B(z_{1,2}(s),y_{1,2}(s),a[s-1:s])\prod_{i=1}^2 P(z_i(s),y_i[s-1:s]),\nonumber\\
f^n_s(v[s]) &= f^n_{s-1}(v[s-1])\ex{f_{\xi,s}(z_{1,2}(s) - (Z^n_{1,2}(s)-\xi^{1,2}(s)))\middle|V^n[s-1] = v[s-1]}\nonumber\\
&\ind\ind B(z_{1,2}(s),y_{1,2}(s),a[s-1:s])\prod_{i=1}^2 P(z_i(s),y_i[s-1:s]),\label{eqn::Vsdenserec}
\end{align}
for $v[s] = (z_{1,2}[s],y_{1,2}[s],a[s]) \in \mf_{s}(\R^d)^2\times \mf_{s}([1:q])^2\times \mf_{s}([m])$.

\ind Applying \eqref{eqn::V0dense} and \eqref{eqn::Vsdenserec}, we get an explicit expression for the respective densities of $V[t]$ and $V^n[t]$:
\begin{align}
f_t(v) &= f_{z,0}(z_{1,2}(0))\left(\prod_{s=1}^t f_{\xi,s}\left(z_{1,2}(s)-\phi_s(v[s-1])\right)\right) B_0\left(z_{1,2}(0),y_{1,2}(0),a(0)\right)\left(\prod_{i=1}^2 P_0(z_i(0),y_i(0))\right)\nonumber\\
&\ind\ind \left(\prod_{s=1}^t B(z_{1,2}(s),y_{1,2}(s),a[s-1:s])\prod_{i=1}^2 P(z_i(s),y_i[s-1:s])\right)\label{eqn::Vdense}\\
f^n_t(v) &= f^n_{z,0}(z_{1,2}(0))\left(\prod_{s=1}^t \ex{f_{\xi,s}\left(z_{1,2}(s)-\left(Z^n_{1,2}(s)-\xi_{1,2}(s)\right)\right)\middle|V^n[s-1] = v[s-1]}\right) \nonumber\\
&\ind\ind B_0\left(z_{1,2}(0),y_{1,2}(0),a(0)\right)\left(\prod_{i=1}^2 P_0(z_i(0),y_i(0))\right)\nonumber\\
&\ind\ind \left(\prod_{s=1}^t B(z_{1,2}(s),y_{1,2}(s),a[s-1:s])\prod_{i=1}^2 P(z_i(s),y_i[s-1:s])\right)\label{eqn::Vndense},
\end{align}
for $v = (z_{1,2},y_{1,2},a) \in \mf_t(\R^d)^2\times\mf_t([1:q])^2\times \mf_t([m])$. This completes the proof of the second claim of the lemma. 

\ind To prove the first claim, note that $V[t]$ and $V^n[t]$ have the same supports as the respective functions $f_t$ and $f^n_t$. Moreover, by assumption, $f^n_{z,0}, f_{z,0}$ and $f_{\xi,s}$ are strictly positive functions for all $n \geq 2$ and $s \in [t]$. It therefore follows from \eqref{eqn::Vdense} and \eqref{eqn::Vndense} that the supports of both $V[t]$ and $V^n[t]$ are given by the set $S_t$ from \eqref{eqn:S_t} and that the densities of the two random elements are strictly positive on $S_t$.
\end{proof}

Next, we apply the above lemma to show that we do not have any circumstances in which the denominator of $L^{n,k}_i(t)$ is non-zero, but the denominator of its limit $L\pcar{i}_k(t)$ is zero (or vice versa). Instead, the events on which the denominator of the averaging terms vanish coincide in the $n$-particle system and the mean-field process.

\begin{lemma}
\label{lem::posdenom}
For each $k \in [1:m]$ and $\Emc \in \borel\left(\mf_t(\R^d\times[1:q])\right)$ such that
\[\PP\left(A\pcar{12}(t)=k\middle|Z\pcar{1}[t],Y\pcar{1}[t]\right)\indic{(Z\pcar{1}[t],Y\pcar{1}[t]) \in \Emc} = 0\te{ a.s.,}\]
$d_{A^{n,k}(t)}(1)\indic{(Z^n_1[t],Y^n_1[t]) \in \Emc}=0$ a.s. for $n$ sufficiently large.
\end{lemma}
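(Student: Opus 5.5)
The plan is to transfer the vanishing of the limiting conditional edge probability to the $n$-particle system through the common support set $S_t$ of Lemma \ref{lem::support}. Write $x=(z_1,y_1)\in \mf_t(\R^d\times[1:q])$. By Corollary \ref{coro::morestruct} and the tower property,
\[\PP\left(A\pcar{12}(t)=k\middle|Z\pcar{1}[t],Y\pcar{1}[t]\right)=\ex{B_{t,k}(Z\pcar{1,2}[t],Y\pcar{1,2}[t])\middle|Z\pcar{1}[t],Y\pcar{1}[t]}.\]
The hypothesis therefore says that $\PP(A\pcar{12}(t)=k,\,(Z\pcar{1}[t],Y\pcar{1}[t])\in\Emc)=0$. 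In terms of $V[t]$, the set
\[\Smc_{\Emc,k}\defeq\{(\bm z_{1,2},\bm y_{1,2},\bm a): (z_1,y_1)\in\Emc,\ a(t)=k\}\]
is $\law(V[t])$-null.

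The first step uses the density description of Lemma \ref{lem::support}. The law of $V[t]$ has a density with respect to $\ell_t$ that is strictly positive on $S_t$ and zero off $S_t$. Hence $\ell_t(\Smc_{\Emc,k}\cap S_t)=0$. The law of $V^n[t]$ is also absolutely continuous with respect to $\ell_t$ and is supported on $S_t$. It follows that
\[\PP\left(V^n[t]\in\Smc_{\Emc,k}\right)=\PP\left(V^n[t]\in \Smc_{\Emc,k}\cap S_t\right)=0,\]
that is, $\PP\left(A^n_{12}(t)=k,(Z^n_1[t],Y^n_1[t])\in\Emc\right)=0$ for every $n\ge 2$. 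The essential point is that the two laws are mutually absolutely continuous on $S_t$. For this it suffices that $V^n[t]$ is supported in $S_t$, which follows from the explicit density \eqref{eqn::Vndense}.

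The second step uses joint exchangeability (Lemma \ref{lem::jexch}). The vectors $(Z^n_1[t],Y^n_1[t],A^n_{1j}(t))$ have the same law for every $j\neq 1$. Since $A^n_{11}(t)=0\neq k$, we get
\[\ex{d_{A^{n,k}(t)}(1)\indic{(Z^n_1[t],Y^n_1[t])\in\Emc}}=\sum_{j=2}^n\PP\left(A^n_{1j}(t)=k,(Z^n_1[t],Y^n_1[t])\in\Emc\right)=0.\]
A nonnegative random variable with zero mean vanishes a.s., which gives the claim. This in fact holds for all $n\geq 2$, which is stronger than "for $n$ sufficiently large".

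The main obstacle is the measure-theoretic care in the first step. One must check that "null under $\law(V[t])$" really implies "$\ell_t$-null within $S_t$", which requires the limiting density to be positive everywhere on $S_t$ and not just almost everywhere. Lemma \ref{lem::support} asserts this. If only an a.e. statement were available, I would instead compare the two densities \eqref{eqn::Vdense} and \eqref{eqn::Vndense} directly: they share the factor $B_0\prod P_0\prod B\prod P$ and differ only by strictly positive factors built from $f_{z,0}$, $f^n_{z,0}$ and $f_{\xi,s}$. This shows directly that $\{f_t=0\}=\{f^n_t=0\}$ up to $\ell_t$-null sets.
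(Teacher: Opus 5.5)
Your proposal is correct and is essentially the paper's proof. You use Lemma \ref{lem::support} to transfer the null event $\{A\pcar{12}(t)=k,\ (Z\pcar{1}[t],Y\pcar{1}[t])\in\Emc\}$ from $V[t]$ to $V^n[t]$, then sum over $j\in[2:n]$ using joint exchangeability and $A^n_{11}(t)=0$. Your explicit use of the density that is positive on $S_t$ is the rigorous form of the paper's step, which appeals only to ``same support'' and writes $\PP(V^n[t]\in\Smc_{\Emc})=\PP(V[t]\in\Smc_{\Emc})$. Positivity $\ell_t$-a.e. on $S_t$, which is all a density can give anyway, already suffices, so the ``everywhere'' worry in your last paragraph is unnecessary.
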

\begin{proof}
Fix $k\in [1:m]$ and fix any $\Emc \in \borel(\mf_t(\R^d)\times\mf_t([1:q])$ such that 
\begin{equation}
\label{eq::AZYe0}
\PP(A\pcar{12}(t)=k|Z\pcar{1}[t],Y\pcar{1}[t])\indic{(Z\pcar{1}[t],Y\pcar{1}[t]) \in \Emc} = 0\te{ a.s..}
\end{equation}
Let $\Smc$ be the support of the random variables $(Z\pcar{1,2}[t],Y\pcar{1,2}[t],A\pcar{12}[t])$, and define
\[\Smc_{\Emc} \defeq \left\{(z_{1,2},y_{1,2},a) \in \mf_t(\R^d)^2\times \mf_t([1:q])^2\times \mf_t([m]): a(t) = k, (z_1,y_1) \in \Emc\right\}.\]
Then \eqref{eq::AZYe0} holds if and only if
\[\PP\left(A\pcar{12}(t) = k,(Z\pcar{1}[t],Y\pcar{1}[t]) \in \Emc\right) = \PP\left((Z\pcar{1,2}[t],Y\pcar{1,2}[t],A\pcar{12}[t]) \in \Smc_{\Emc}\right) = 0.\]
By Lemma \ref{lem::support}, $(Z\pcar{1,2}[t],Y\pcar{1,2}[t],A\pcar{12}[t])$ and $(Z^n_{1,2}[t],Y^n_{1,2}[t],A^n_{12}[t])$ have the same support for $n \geq 2$, so 
\begin{align*}
\PP\left(A^n_{12}(t) = k,(Z^n_{1},Y^n_{1}[t]) \in \Emc\right) &= \PP\left((Z^n_{1,2}[t],Y^n_{1,2}[t],A^n_{12}[t]) \in \Smc_{\Emc}\right) \nonumber\\&=\PP\left((Z\pcar{1,2}[t],Y\pcar{1,2}[t],A\pcar{12}[t]) \in \Smc_{\Emc}\right) = 0.
\end{align*}

By ex-changeability, this implies
\[\PP\left(A^n_{1j}(t) = k,(Z^n_{1},Y^n_{1}[t]) \in \Emc\right) = 0\]
for all $j \in [2:n]$. Moreover, $A^n_{11}(t) = 0$ by definition. Thus,
\begin{align*}
d_{A^{n,k}(t)}(1)\indic{(Z^n_1[t],Y^n_1[t]) \in \Emc} = \sum_{j=1}^n \indic{A^n_{1j}(t) = k,(Z^n_1[t],Y^n_1[t]) \in \Emc} = 0\te{ a.s..}
\end{align*}
\end{proof}

Our results require both the numerator and denominator of each $L^{n,k}_i$ to converge when scaled by $1/n$. At its foundation, our argument is a law of large numbers result. Therefore, it is not sufficient for the denominator of $L^{n,k}_i$ to be non-zero if and only if $L\pcar{i}(t)$ has a non-zero denominator. In fact, we need to show that the denominator is either equal to $0$ or of order $\theta(n)$. More specifically, the boundary separating these events must itself be a null event. This lemma is used to establish that claim. Specifically, the following lemma is used to prove that division of the numerator and denominator of $L\pcar{i}_k(t)$ (with the $0/0=0$ convention) is an a.s. continuous operation. Therefore, if we know that the numerator and denominator of $L^{n,k}_i(t)$ scaled by $1/n$ jointly weakly converge to the numerator and denominator of $L\pcar{i}_k(t)$, then by the continuous mapping theorem, $L^{n,k}_i(t)$ converges weakly to $L\pcar{i}_k(t)$.
\begin{lemma}
\label{lem::vanishbound}
For every $k \in [1:m]$, there exists a set $\Emc \subseteq \borel\left(\mf_t(\R^d\times[1:q])\right)$ such that
\begin{equation}
\label{eqn::AGE}
\PP\left(A\pcar{12}(t) = k\middle|\Gmc^1_t\right)\indic{(Z\pcar{1}[t],Y\pcar{1}[t]) \in \Emc} = 0 \te{ a.s.}
\end{equation}
and 
\begin{equation}
\label{eqn::AGEc}
\PP\left(\PP\left(A\pcar{12}(t) = k\middle|\Gmc^1_t\right) = 0, (Z\pcar{1}[t],Y\pcar{1}[t]) \notin \Emc\right) = 0.
\end{equation}
Moreover, 
\[\PP\left((Z\pcar{1}[t],Y\pcar{1}[t]) \in \partial \Emc\right) = 0.\]
\end{lemma}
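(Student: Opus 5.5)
We will construct $\Emc$ explicitly from the functions $B_0,P_0,B,P$, so that membership in $\Emc$ is decided by finitely many sets of the form $\Dmc_0(\cdot)$ and $\Dmc(\cdot)$ from Assumption \ref{assu::Bbds}. The starting point is the formula
\[
\PP\left(A\pcar{12}(t)=k\middle|\Gmc^1_t\right)=\ex{B_{t,k}(Z\pcar{1,2}[t],Y\pcar{1,2}[t])\middle|Z\pcar{1}[t],Y\pcar{1}[t]},
\]
obtained from Corollary \ref{coro::morestruct}. Since $Z\pcar{2}[t],Y\pcar{2}[t]$ is independent of $(Z\pcar{1}[t],Y\pcar{1}[t])$, this equals $\int B_{t,k}(x_1,x')\,\mu_t(dx')$ evaluated at $x_1=(Z\pcar{1}[t],Y\pcar{1}[t])$.

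By Lemma \ref{lem::support}, $\mu_t$ has a density with respect to $\ell$ that is strictly positive exactly on the marginal support set. Expanding $B_{t,k}$ as the path sum in \eqref{eqn::Bsadef}, the integral vanishes at $x_1$ if and only if, for every $[m]$-path $\bm{a}$ with $\bm{a}(t)=k$ and every type path $y'$, the integrand
\[
B_0(z_1(0),z'(0),y_1(0),y'(0),\bm a(0))\,P_0(z'(0),y'(0))\prod_{s=1}^t B(z_1(s),z'(s),y_1(s),y'(s),\bm a[s-1:s])\,P(z'(s),y'[s-1:s])
\]
vanishes for a.e.\ $z'[t]$. We therefore define $\Emc$ as the set of $x_1=(z_1,y_1)$ for which this holds for all $(\bm a,y')$. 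Then \eqref{eqn::AGE} and \eqref{eqn::AGEc} hold by construction: up to null sets, $\{x_1:\int B_{t,k}(x_1,\cdot)\,d\mu_t=0\}$ coincides with $\Emc$ on the support of $(Z\pcar{1}[t],Y\pcar{1}[t])$.

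The main step is to show that $\partial\Emc$ is null. The integrand above is a product over time, with $z'(s)$ ranging freely at each time because the noise has full support. Hence, for fixed $(\bm a,y')$, the integrand vanishes for a.e.\ $z'[t]$ if and only if some single factor vanishes for a.e.\ value of its own $z'$ coordinate. That is, either $z_1(0)\in\Dmc_0(y_1(0),y'(0),\bm a(0))$, or $z_1(s)\in\Dmc(y_1(s),y'(s-1),y'(s),\bm a(s-1),\bm a(s))$ for some $s$. The one exception is a factor depending only on $z'$, such as $P$ or $P_0$, vanishing a.e.; that condition is independent of $x_1$ and only selects which $(\bm a,y')$ are relevant. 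Thus $\Emc$ is a finite Boolean combination (unions and intersections) of cylinder sets $\{z_1(s)\in\Dmc_{\cdot}\}\times\{y_1=\cdot\}$. The boundary of such a combination is contained in the union of the boundaries of the pieces. Each of these boundaries lies in a set of the form $\{z_1(s)\in\partial\Dmc\}$, which is Lebesgue null by Assumption \ref{assu::Bbds}. Since $Z\pcar{1}[t]$ is absolutely continuous by Lemma \ref{lem::abscont}, and $\mf_t([1:q])$ is discrete so type coordinates contribute no boundary, we get $\PP((Z\pcar{1}[t],Y\pcar{1}[t])\in\partial\Emc)=0$.

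The hardest point is the factorization claim: that vanishing of the product a.e.\ in $z'[t]$ forces one factor to vanish a.e. We argue it by Fubini. The set where each factor is positive has positive measure in its own coordinate $z'(s)$, and the coordinates $z'(s)$ for different $s$ are independent variables of integration, so the product of these positive-measure sets has positive product measure. This argument needs the factors at different times to involve disjoint $z'$ coordinates, which holds because $B$ and $P$ at time $s$ depend on $z'$ only through $z'(s)$. It also requires matching exactly the index conventions of $\Dmc$ in Assumption \ref{assu::Bbds}, so some care is needed there.
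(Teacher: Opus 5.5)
Your proposal is correct and is essentially the paper's argument: after your factorization step, your $\Emc$ coincides with the paper's set $\bigcap_{\bm a:\,\bm a(t)=k}\bigcap_{\bm y'}\bigcup_{s=0}^t\bar{\Dmc}_{s,t}(\bm y',\bm a)$ from \eqref{eqn::Emcdef}, and your null-boundary argument is the same as the paper's (a finite Boolean combination of cylinder sets, then Assumption \ref{assu::Bbds} and Lemma \ref{lem::abscont}). The only difference is that you verify \eqref{eqn::AGE} and \eqref{eqn::AGEc} directly from the zero set of the version $x_1\mapsto\int B_{t,k}(x_1,x')\,\mu_t(dx')$, whereas the paper works through the support of $(Z\pcar{1}[t],Y\pcar{1}[t],A\pcar{12}[t])$ in \eqref{eqn::Sbartdef}; your route makes explicit the product-measure factorization the paper uses only implicitly, and it avoids the paper's exceptional set $O$.
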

\begin{proof}
Recall the sets $\Dmc_0(\cdot)$ and $\Dmc(\cdot)$ as defined in Assumption \ref{assu::Bbds}. For a fixed $t \geq 0$ and $0 < s \leq t$, consider the following sets in $\borel(\mf_t(\R^d\times [1:q]))$ parameterized by $(\bm{y}',\bm{a}') \in \mf_t([1:q])\times\mf_t([m])$:
\begin{align}
\bar{\Dmc}_{0,t}\left(\bm{y}',\bm{a}\right) &\defeq \left\{(\bm{z},\bm{y})\in \mf_t(\R^d\times [1:q]): z(0) \in \Dmc_0\left(y(0),y'(0),a(0)\right)\right\},\nonumber\\
\bar{\Dmc}_{s,t}\left(\bm{y}',\bm{a}\right) &\defeq \left\{(\bm{z},\bm{y})\in \mf_t(\R^d\times[1:q]): z(s) \in \Dmc\left(y(s),y'[s-1:s],a[s-1:s]\right)\right\}.\label{eq::Dbar}
\end{align}

Note that for any $s\in[1:t]$ and $(\bm{y}',\bm{a}) \in \mf_t([1:q])\times \mf_t([m])$,
\begin{align*}
\PP\left((Z\pcar{1}[t],Y\pcar{1}[t]) \in \partial \bar{\Dmc}_{s,t}\left(\bm{y}',\bm{a}\right)\right) &= \PP\left(Z\pcar{1}(s) \in \partial\Dmc\left(Y\pcar{1}(s),y'[s-1:s],a[s-1:s]\right)\right)\\
&= \sum_{y =1}^q\PP\left(Z\pcar{1}(s) \in \partial \Dmc\left(y,y'[s-1:s],a[s-1:s]\right),Y\pcar{1}(s)=y\right)\\
&\leq \sum_{y=1}^q \PP\left(Z\pcar{1}(s) \in \partial \Dmc\left(y,y'[s-1:s],a[s-1:s]\right)\right)\\
&= 0,
\end{align*}
where the last equality holds by Assumption \ref{assu::Bbds} and because $Z\pcar{1}(s)$ is absolutely continuous by Lemma \ref{lem::abscont}. By a near identical argument, $\partial \bar{\Dmc}_{0,t}(\bm{y}',\bm{a})$ is also a $(Z\pcar{1}[t],Y\pcar{1}[t])$-null set.

\ind Now, consider the finite $\sigma$-algebra $\Amc_t \defeq \sigma\left(\bar{\Dmc}_{s,t}(\bm{y}',\bm{a}):s \in [t], (\bm{y}',\bm{a}) \in \mf_t([1:q])\times\mf_t([m])\right)$. For any set $A \in \Amc_t$, 
\begin{align*}
\PP\left((Z\pcar{1}[t],Y\pcar{1}[t]) \in \partial A\right) &\leq \PP\left((Z\pcar{1}[t],Y\pcar{1}[t]) \in \bigcup_{s=0}^t\bigcup_{(\bm{y}',\bm{a}) \in \mf_t([1:q]\times[m])} \partial \bar{\Dmc}_{s,t}(\bm{y}',\bm{a}) \right)\\
&\leq \sum_{s=0}^t \sum_{(\bm{y}',\bm{a}) \in \mf_t([1:q]\times[m])} \PP\left((Z\pcar{1}[t],Y\pcar{1}[t]) \in \partial \bar{\Dmc}_{s,t}(\bm{y}',\bm{a})\right)\\
&= 0.
\end{align*}
Therefore, it suffices to prove that there exists a set $\Emc\in \Amc_t$ satisfying \eqref{eqn::AGE} and \eqref{eqn::AGEc}.

\ind To find $\Emc$, recall from Lemma \ref{lem::support} that the support of $V \defeq (Z\pcar{1,2},Y\pcar{1,2},A)$ is given by the set $S_t$ defined in \eqref{eqn:S_t}; for any $v \defeq (\bm{z}_{1,2},\bm{y}_{1,2},\bm{a}) \in \mf_t(\R^d)^2\times\mf_t([1:q])^2\times \mf_t([m])$, $v \notin S_t$ if and only if at least one of the following conditions are satisfied:
\begin{align*}
P_0(z_1(0),y_1(0))\prod_{s=1}^t P(z_1(s),y_1[s-1:s]) &= 0,\\
B_0(z_{1,2}(0),y_{1,2}(0),a(0))P_0(z_2(0),y_2(0)) &= 0,\\
B(z_{1,2}(s),y_{1,2}(s),a[s-1:s])P(z_2(s),y_2[s-1:s]) &= 0\te{ for some }s \in [1:t].
\end{align*}
Let $T[t] \defeq (Z\pcar{1}[t],Y\pcar{1}[t],A\pcar{12}[t])$, and let $\bar{S}_t \defeq \te{Supp}(T[t])$. We can apply the above display to describe $\bar{S}_t$ in terms of the sets $\bar{D}_{s,t}(\cdot)$ defined in \eqref{eq::Dbar}. For any $\bm{t} \defeq (\bm{z},\bm{y},\bm{a}) \in \mf_t(\R^d\times[1:q]\times [m])$, $\bm{t} \notin \bar{S}_t$ if for every $\bm{y}'\in \mf_t([1:q])$, at least one of the following conditions are satisfied:
\begin{equation}
\label{eqn::imposs}
P_0(z(0),y(0))\prod_{s'=1}^t P(z(s'),y[s'-1,s']) =0 \te{ or } (\bm{z},\bm{y}) \in \bigcup_{s=0}^t\bar{D}_{s,t}(\bm{y}',\bm{a}).
\end{equation}
Let $O$ be the set of $(\bm{z},\bm{y})$ satisfying the first part of \eqref{eqn::imposs}. Then,
\begin{equation}
\label{eqn::Sbartdef}
\bar{S}_t^c = \left(O\times \mf_t([m])\right)\cup \bigcup_{\bm{a} \in \mf_t([m])} \bigcap_{\bm{y}' \in \mf_t([1:q])}\bigcup_{s=0}^t \bar{D}_{s,t}(\bm{y}',\bm{a})\times \{\bm{a}\}.
\end{equation}
As a consequence of Lemma \ref{lem::support}, the density of the random element $T[t]$ with respect to the product of the Lebesgue measure on $\mf_t(\R^d)$ and the uniform measure on $[1:q]\times[m]$ has a strictly positive density on $\bar{S}_t$.

\ind Consider the set
\begin{equation}
\label{eqn::Emcdef}
\Emc \defeq \bigcap_{\substack{\bm{a} \in \mf_t([m])\\a(t) = k}}  \bigcap_{\bm{y}'\in \mf_t([1:q])} \bigcup_{s=0}^t \bar{\Dmc}_{s,t}(\bm{y}',\bm{a}).
\end{equation} 
Then for any $\bm{t}$ such that $(\bm{z},\bm{y}) \in \Emc$ and $\bm{a}(t) = k$, $\bm{t} \in \bar{S}^c_t$. Moreover, for any $(\bm{z},\bm{y}) \notin \Emc$, there must exist an $\bm{a} \in \mf_t([m])$ such that $\bm{a}(t) = k$ and for some $\bm{y}' \in \mf_t([1:q])$, $(\bm{z},\bm{y}) \notin \bar{D}_{s,t}(\bm{y}',\bm{a})$ for all $s \in [t]$. This implies that $\bm{t} \defeq (\bm{z},\bm{y},\bm{a}) \in \bar{S}_t\cup O\times \mf_t([m])$. 

\ind We can now verify that $\Emc$ is indeed the set we are seeking. By \eqref{eqn::Emcdef}, $\Emc \in \Amc_t$, so $\partial\Emc$ is a null set as desired. Furthermore, as shown above, given any $\bm{t} \defeq (\bm{z},\bm{y},\bm{a})$ such that $(\bm{z},\bm{y})\in \Emc$ and $\bm{a}(t) = k$, $\bm{t} \in \bar{S}^c_t$. So,
\begin{align*}
\ex{\PP\left(A\pcar{12}(t) = k\middle|\Gmc^1_t\right)\indic{(Z\pcar{1}[t],Y\pcar{1}[t]) \in \Emc}} = \PP\left(A\pcar{12}(t) = k,(Z\pcar{1}[t],Y\pcar{1}[t])\in \Emc\right) = 0,
\end{align*}
so $\Emc$ satisfies \eqref{eqn::AGE}. Next, for any $\bm{t}$ such that $(\bm{z},\bm{y})\notin \Emc$ and $\bm{a}(t) = k$, $\bm{t} \in \bar{S}_t\cup O\times \mf_t([m])$. However, since the density of $T[t]$ is strictly positive on $\bar{S}_t$, it follows that $\PP(A\pcar{12}(t) = k|\Gmc_t^1)> 0$ whenever there exists an $\bm{a}$ such that $\bm{a}(t) = k$ and $(Z\pcar{1}[t],Y\pcar{1}[t],\bm{a}) \in \bar{S}_t$. Therefore, $\PP(A\pcar{12}(t) = k|\Gmc_t^1)> 0$ only when $(Z\pcar{1}[t],Y\pcar{1}[t]) \in \Emc\cup O$. So,
\begin{align*}
\PP\left(\PP\left(A\pcar{12}(t) = k\middle|\Gmc^1_t\right) = 0,(Z\pcar{1}[t],Y\pcar{1}[t]) \notin \Emc\right) &\leq \PP\left((Z\pcar{1}[t],Y\pcar{1}[t]) \in O\right) = 0.
\end{align*}
This concludes the proof of the lemma.
\end{proof}

\appendix

\section{Sufficient Conditions for Uniform Integrability}
\label{sec::ui}
In this section, we establish sufficient conditions for Assumption \ref{assu::ui} to hold. Our sufficient conditions are separated into two cases: static/mixing and general. Either way, we start with the assumption that the latent positions and noise terms at time $0$ have finite exponential moments:

\begin{assumption}[Finite Exponential Moments]
\label{assu:expbds}
The following assumptions hold:
\begin{enumerate}[label = (\alph*)]
\item for each $n \in \N$,
\begin{equation}
\label{eq::MGFZ0}
\limsup_{n\to\infty} \ex{\exp\left(C|Z^n_1(0)|\right)} < \infty \te{ for all }C \in \R;
\end{equation}
\item likewise,
\begin{equation}
\label{eq::MGFxi0}
 \ex{\exp\left(C|\xi_1(0)|\right)} < \infty \te{ for all }C \in \R;
\end{equation}
\end{enumerate}
\end{assumption}

\ind In general, our assumptions ensure that the moment generating functions (MGF) of the latent opinions at any given time $t\geq 0$ have a finite limsup, which immediately implies uniform integrability of $\{Z^n_i(t)\}_{n \in\N, i \in [1:n]}$. To do this, we use an inductive argument; for each $t$, if $Z^n_i(t)$ has a finite MGF in the limit, then so does $L^{n,k}_i(t)$ for each $k \in [1:m]$. If each $L^{n,k}_i(t)$ has a finite MGF in the limit, then so does $Z^n_i(t+1)$. The second part of the argument is a rather straightforward calculation, but the first is more complicated. We present two mutually exclusive types of sufficient conditions expressed as bounds on the interaction kernels $B_0$ and $B$. For each $k, y$, and $y'$, all interactions of type $k$ between particles of respective type $y$ and $y'$ must satisfy one of these conditions.

\ind Our first type of sufficient condition is that the respective networks with adjacency matrices $A^{n,k}_{ij}(t)\defeq \indic{A^n_{ij}=k}(t)$ are sufficiently well connected; specifically that the probability of two particles interacting decreases at most exponentially fast in the distance between the latent positions of the particles. In this case, it can be shown via a careful computation that $L^{n,k}_i(t)$, which represents the average latent opinion of the neighbors of $i$ at time $t$, will have a small enough spread to possess a finite moment generating function (in the limit). 

\ind The second type of sufficient condition is that two particles of a given type pair can only experience type $k$ interactions if the distance between their latent positions is bounded. If this condition holds for all interactions of a given type $k$, then $|L^{n,k}_i(t) - Z^n_i(t)|$ is a.s. bounded, so the MGF of $L^{n,k}_i(t)$ is bounded by a constant multiple of the (finite) MGF of $Z^n_i(t)$. It is worth nothing that this condition allows for restrictions preventing interactions of a given type $k$ between particle pairs with respective types $y$ and $y'$.

\ind In general, for any $k$, we require all type-$k$ interactions to satisfy one type of sufficient condition regardless of particle types. However, if we place a few additional assumptions on $P$ and $P_0$, then we may allow each interaction of type $k$ to satisfy one or both of the above sufficient conditions depending on the types of the particle pair. Essentially, we must assume either that particle types are static or particle types quickly mix.

\ind In Section \ref{ssec::uisc}, we state the sufficient conditions in both cases. Then in Section \ref{ssec::uipf} we prove that these conditions are sufficient for Assumption \ref{assu::ui} to hold.

\subsection{Some sufficient conditions}
\label{ssec::uisc}

As mentioned above, we present two different types of bounds.

\begin{definition}
\label{defn::Bbds}
For any $(y_{\{1,2\}},k) \in [1:q]^2\times[m]$, 
\begin{enumerate}[label = (\alph*)]
\item we say the model satisfies a $(y_{\{1,2\}},k)$\emph{-exponential bound with respect to } $C_k > 0$ if 
\begin{align}
\inf_{z_{\{1,2\}}\in(\R^d)^2} B_0(&z_{\{1,2\}},y_{\{1,2\}},k)\exp\left(C_k|z_1-z_2|\right) > 0,\label{eq::B0expbd}\\
\inf_{(z_{\{1,2\}},k_0)\in(\R^d)^2\times [m]} B(&z_{\{1,2\}},y_{\{1,2\}},k_0,k) \exp\left(C_k|z_1-z_2|\right) > 0.\label{eq::Bexpbd}
\end{align} 

\item Likewise, we say the model satisfies a $(y_{\{1,2\}},k)$\emph{-finite range bound with respect to } $C_k > 0$ if for all $k_0\in [m]$ and $z_{\{1,2\}} \in (\R^d)^2$ satisfying $|z_1 - z_2|> C_k$,
\begin{equation}
\label{eq::B0Bcutoff}
B_0(z_{\{1,2\}},y_{\{1,2\}},k) = B(z_{\{1,2\}},y_{\{1,2\}},k_0,k) = 0.
\end{equation}
\end{enumerate}
\end{definition}

With this, we may state the general assumption.

\begin{assumption}[Uniform Integrability Bounds (General Case)]
\label{assu::bdsgen}
For each $k \in [1:m]$, there exists a constant $C_k > 0$ such that either of the following statements hold for all $y_{\{1,2\}} \in [1:q]^2$: 
\begin{enumerate}[label = (\alph*)]
\item the model satisfies a type $(y_{\{1,2\}},k)$-exponential bound with respect to $C_k$, or
\item the model satisfies a type $(y_{\{1,2\}},k)$-finite range bound with respect to $C_k$.
\end{enumerate}
\end{assumption}

We may weaken Assumption \ref{assu::bdsgen} by placing the following additional assumption on the type evolutions:

\begin{assumption}
\label{assu::ybds}
Either
\begin{enumerate}[label = (\roman*)]
\item the particle types are static and non-vanishing in the following sense: there exists a $\delta > 0$ such that
\[P_0(z,y) \geq \delta \te{ and } P(z,y,y') \equiv 0 \te{ for all }(z,y',y) \in \R^d\times[1:q]^2\te{ s.t. }y\neq y',\]
\item or particle types are dynamic and quickly mixing in the following sense: there exists a $\delta > 0$ such that
\[P_0(z,y) \geq \delta \te{ and } P(z,y',y) \geq \delta \te{ for all } (z,y',y) \in \R^d\times[1:q]^2.\]
\end{enumerate}
\end{assumption}

Given Assumption \ref{assu::ybds}, we may replace Assumption \ref{assu::bdsgen} by,

\begin{assumption}[Uniform Integrability Bounds (Static or Mixing Case)]
\label{assu::bdsy}
For each $(y_{\{1,2\}},k) \in [1:q]^2\times[m]$, there exists a constant $C_k > 0$ such that either of the following statements hold: 
\begin{enumerate}[label = (\alph*)]
\item the model satisfies a type $(y_{\{1,2\}},k)$-exponential bound with respect to $C_k$, or
\item the model satisfies a type $(y_{\{1,2\}},k)$-finite range bound with respect to $C_k$.
\end{enumerate}
\end{assumption}

With this, we have the following result.

\begin{proposition}
\label{prop:uisuffconds}
Suppose Assumptions \ref{assu::cond}, \ref{assu::init}, \ref{assu::Bbds} and \ref{assu:expbds} hold. Assume also that either Assumption \ref{assu::bdsgen} or Assumptions \ref{assu::ybds} and \ref{assu::bdsy} hold. Then Assumption \ref{assu::ui} holds.
\end{proposition}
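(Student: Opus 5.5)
The plan is to prove by induction on $t$ that for every $C>0$,
\[\limsup_{n\to\infty}\ex{\exp(C|Z^n_1(t)|)}<\infty.\]
By exchangeability (Lemma \ref{lem::jexch}), $Z^n_i(t)\deq Z^n_1(t)$ for all $i$. A finite limsup of the MGF gives a uniform bound on a superlinear moment for all large $n$, and each of the finitely many remaining $n$ is handled by a finite-moment bound. Together these yield uniform integrability of $\{Z^n_i(t)\}$, i.e.\ Assumption \ref{assu::ui}.

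The base case $t=0$ is exactly \eqref{eq::MGFZ0}. For the step from $t$ to $t+1$, \eqref{eqn::Znevolve} gives
\[|Z^n_1(t+1)|\le \lambda^*\Bigl(|Z^n_1(t)|+\sum_k |L^{n,k}_1(t)|\Bigr)+|\xi_1(t)|,\qquad \lambda^*\defeq\max_{k,y}\|\Lambda_k(y)\|,\]
which is finite because $[1:q]$ is finite. The noise $\xi_1(t)$ is independent of $\Fmc^n_t$, so its MGF factors out and is finite by \eqref{eq::MGFxi0}. Applying H\"older's inequality to the remaining product then reduces the step to showing $\limsup_n\ex{\exp(C|L^{n,k}_1(t)|)}<\infty$ for each $k$ and each $C$.

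The core is the bound on $L^{n,k}_1(t)$, which I split according to the two kinds of bounds. If type-$k$ interactions obey a finite-range bound, then $|L^{n,k}_1(t)-Z^n_1(t)|\le C_k$ almost surely, since every type-$k$ neighbour lies within distance $C_k$ (and $L=0$ when there are no neighbours). Hence $\exp(C|L|)\le e^{CC_k}\exp(C|Z^n_1(t)|)$, and the induction hypothesis applies. In the exponential case, Jensen's inequality gives
\[\exp(C|L^{n,k}_1|)\le \frac{1}{d}\sum_j \exp(C|Z^n_j(t)|)\indic{A^n_{1j}(t)=k},\qquad d\defeq d_{A^{n,k}(t)}(1).\]
The difficulty is the random denominator, and I would use conditional independence of edges given $\Fmc^{A,n}_t$ together with the lower bound \eqref{eq::Bexpbd}. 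Conditionally, each edge is present with probability at least $c\,e^{-C_k|Z_1-Z_j|}$. I would restrict to a good event on which $\frac1n\#\{j:|Z^n_j(t)|\le R\}\ge 1/2$, using a Markov bound from the inductive MGF bound. On that event $d$ stochastically dominates a binomial with mean of order $n e^{-C_k(|Z_1|+R)}$. A Chernoff bound shows $d\ge c' n e^{-C_k|Z_1|}$ except with exponentially small probability, and on the complement I bound crudely by $\max_j\exp(C|Z_j|)\le\sum_j\exp(C|Z_j|)$. On the main event the right-hand side is at most $e^{C_k|Z_1|}\frac{1}{c'n}\sum_j \exp(C|Z_j|)$. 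Its expectation is controlled by Cauchy--Schwarz through the MGFs of $Z^n_1(t)$ at levels $2C_k$ and $2C$. The cost of the bad events is polynomial in $n$ times the exponentially small probability, so it vanishes.

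Under Assumption \ref{assu::bdsgen}, the bound type for interaction $k$ is the same for all type pairs, so the above case split is clean. Under Assumptions \ref{assu::ybds} and \ref{assu::bdsy}, a particle $1$ of type $y$ may have type-$k$ neighbours of finite-range type $y'$ alongside exponential-type ones. I would decompose $L^{n,k}_1$ as a convex combination over neighbour types $y'$ and bound each piece separately. The finite-range pieces are handled directly as above. For the exponential pieces, the Chernoff argument needs enough type-$y'$ particles in the population. This is what Assumption \ref{assu::ybds} supplies: with $P_0,P\ge\delta$ (static or mixing), each particle has type $y'$ with conditional probability bounded below (or, in the static case, the type is a fixed $\delta$-nondegenerate draw), so order $n$ particles of type $y'$ lie within radius $R$.

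I expect the main obstacle to be the exponential case: controlling the denominator $d$ uniformly, and making the Chernoff and good-event estimates rigorous despite the dependence of the $Z^n_j(t)$.
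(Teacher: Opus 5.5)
Your outer structure matches the paper's: induction on $\limsup_n \ex{e^{C|Z^n_1(t)|}}$, factoring out the independent noise, the finite-range case via $|L^{n,k}_1(t)-Z^n_1(t)|\le C_k$, and the convexity bound $e^{C|L^{n,k}_1(t)|}\le d^{-1}\sum_j e^{C|Z^n_j(t)|}\indic{A^n_{1j}(t)=k}$. The gap is how you control the denominator $d$ in the exponential case. For your good event $G_R=\bigl\{\frac1n\#\{j:|Z^n_j(t)|\le R\}\ge\frac12\bigr\}$, Markov's inequality and exchangeability give only $\PP(G_R^c)\le 2\PP(|Z^n_1(t)|>R)=O(e^{-aR})$ uniformly in $n$. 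This bound does not decay in $n$. Because the $Z^n_j(t)$ are dependent, you also have no finite-$n$ concentration of their empirical measure to improve it. On $G_R^c$ your fallback $\max_j e^{C|Z^n_j(t)|}$ has no $n$-uniform bound in expectation; already for i.i.d.\ Gaussian positions it grows like $e^{C\sqrt{2\log n}}$. So $\ex{e^{C|L^{n,k}_1(t)|}\indic{G_R^c}}$ is not controlled for fixed $R$. Letting $R=R_n\to\infty$ does not help, because the main-event bound carries the factor $e^{C_kR}$ coming from $\mu\ge \frac{cn}{2}e^{-C_k(|Z^n_1(t)|+R)}$. Hence your claim that every bad event costs ``polynomial in $n$ times exponentially small'' is false for $G_R^c$.

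The Chernoff step is not uniform either. When $|Z^n_1(t)|\approx C_k^{-1}\log n$, your lower bound on the conditional mean degree is only $O(1)$ and $e^{-\mu/8}$ is not small, so that region needs a separate tail estimate. The same problem reappears in your static/mixing paragraph. Hoeffding controls how many particles have type $y'$, but ``order $n$ of them within radius $R$'' is again a concentration statement about positions that you do not have. In the static case the types are fixed at time $0$ and the time-$t$ positions depend on them.

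The missing idea is a lower bound on the conditional mean degree that holds on the whole space and whose reciprocal has moments bounded uniformly in $n$, so that no good event is needed. The paper gets it from AM--GM: $\frac1N\sum_j e^{-C_k|Z_1-Z_j|}\ge\exp\bigl(-\frac{C_k}{N}\sum_j|Z_1-Z_j|\bigr)$. The resulting factor $\exp\bigl(\frac aN\sum_j|Z_j|\bigr)\le\frac1N\sum_j e^{a|Z_j|}$ has bounded expectation by exchangeability. With types, the average runs over the random set $U_n$. Hoeffding is then applied only to type counts (Lemma \ref{lem::invbd}), and the random H\"older inequality (Lemma \ref{lem::Holder}) absorbs the positions.

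The paper also avoids Chernoff entirely. By exchangeability the sum becomes $(n-1)\ex{\indic{A^n_{12}(t)=k}e^{C|Z^n_2(t)|}/d}$. Given $\Fmc^{A,n}_t$, $\indic{A^n_{12}(t)=k}$ is conditionally independent of $S=\sum_{i>2}\indic{A^n_{1i}(t)=k}$. For sums of independent Bernoulli variables, $\ex{(1+S)^{-1}\mid\Fmc^{A,n}_t}\le(\ex{S\mid\Fmc^{A,n}_t})^{-1}$, and this yields \eqref{eq:AZtoPZ}.

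Your Chernoff route can be repaired with the same ingredient. Drop $G_R$ and use $\mu\ge c(n-1)/W$, where $W=\exp\bigl(C_k|Z^n_1(t)|+\frac{C_k}{n-1}\sum_{j\neq 1}|Z^n_j(t)|\bigr)$ has moments bounded uniformly in $n$. Then split the Chernoff-failure term on $\{W\le\sqrt n\}$ versus $\{W>\sqrt n\}$.
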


\subsection{Proof of Proposition \ref{prop:uisuffconds}}
\label{ssec::uipf}

As mentioned above in Section \ref{sec::ui}, we actually prove the following slightly stronger statement.

\begin{lemma}
\label{lem::unifind}
Under the conditions of Proposition \ref{prop:uisuffconds}, 
\begin{equation}
\label{eqn::zsupexpLn}
\limsup_{n\to\infty} \ex{\exp\left(C|Z^n_1(t)|\right)} < \infty.
\end{equation}
\end{lemma}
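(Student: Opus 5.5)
We argue by induction on $t$, proving \eqref{eqn::zsupexpLn} for every $C\in\R$ simultaneously. The base case $t=0$ is exactly \eqref{eq::MGFZ0}. For the inductive step, use the dynamics \eqref{eqn::Znevolve}. Since $[1:q]$ is finite, $\Lambda_{\max}\defeq \max_{k\in[m],y\in[1:q]}\|\Lambda_k(y)\|$ is finite, so
\[|Z^n_1(t+1)| \leq \Lambda_{\max}\Big(|Z^n_1(t)| + \sum_{k=1}^m |L^{n,k}_1(t)|\Big) + |\xi_1(t)|.\]
The noise $\xi_1(t)$ is independent of $\Fmc^n_t$. Hence Hölder's inequality with $m+2$ factors and \eqref{eq::MGFxi0} reduce the problem to showing $\limsup_n \ex{\exp(C|L^{n,k}_1(t)|)}<\infty$ for every $C$ and every $k\in[1:m]$, given the inductive hypothesis at time $t$. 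Thus the whole difficulty is the ratio term. We split by which alternative of Assumption \ref{assu::bdsgen} (or \ref{assu::bdsy}) holds for the pair $(k,y_{\{1,2\}})$.

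\textbf{Finite range case.} If $A^n_{1j}(t)=k$ is only possible when $|Z^n_1(t)-Z^n_j(t)|\leq C_k$, then $L^{n,k}_1(t)$ is an average of points within distance $C_k$ of $Z^n_1(t)$, or equals $0$ when the $k$-degree is zero. Therefore $|L^{n,k}_1(t)|\leq |Z^n_1(t)|+C_k$, and the bound follows directly from the inductive hypothesis.

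\textbf{Exponential case.} By convexity of $x\mapsto e^{C|x|}$ and Jensen's inequality applied to the empirical average over $k$-neighbours,
\[e^{C|L^{n,k}_1(t)|}\leq \frac{\frac1n\sum_j e^{C|Z^n_j(t)|}\indic{A^n_{1j}(t)=k}}{\frac1n d_{A^{n,k}(t)}(1)}.\]
We then condition on $\Fmc^{A,n}_t$. The edges $A^n_{1j}(t)$, $j\ge 2$, are conditionally independent with success probabilities $p_j\ge c\,e^{-C_k|Z^n_1(t)-Z^n_j(t)|}\ge c\,e^{-C_k(|Z^n_1(t)|+|Z^n_j(t)|)}$. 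On the event that $\sum_j p_j\geq 2n^{-1/2}\cdot n$, say, a Chernoff bound shows that the denominator is at least $\frac{1}{2n}\sum_j p_j$ with overwhelming probability. The numerator has conditional mean $\frac1n\sum_j p_j e^{C|Z^n_j|}$. This should yield a bound on the conditional expectation of the form
\[e^{2C_k|Z^n_1(t)|}\,\frac{\frac1n\sum_j e^{C|Z^n_j(t)|}}{\frac1n\sum_j e^{-C_k|Z^n_j(t)|}},\]
up to constants and an exponentially small error term. We handle the denominator here by restricting to $j$ with $|Z^n_j(t)|\le R$. Exchangeability (Lemma \ref{lem::jexch}) and the inductive MGF bound make the fraction of such $j$ at least $1/2$ with high probability for $R$ large. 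On the complementary small-probability events, we use the crude bound $|L^{n,k}_1(t)|\le\max_j|Z^n_j(t)|$ together with Cauchy–Schwarz. After taking expectations and applying Hölder with the inductive hypothesis (for enlarged constants $2C_k$, $2C$, and so on), the $\limsup$ is finite.

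In the static or mixing setting of Assumptions \ref{assu::ybds} and \ref{assu::bdsy}, the same dichotomy is applied pair-type by pair-type. The lower bound $\delta$ on $P_0$ and $P$ guarantees that a positive fraction of particles with bounded position carry each type admitting exponential-type $k$ edges, which is needed for the denominator control above. For pair types under the finite range bound, the neighbours again stay within $C_k$.

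I expect the main obstacle to be the exponential case: controlling the denominator uniformly in $n$. The decisive point is the event where $Z^n_1(t)$ is large, so that all edge probabilities are tiny. Here I would try first to show that the Chernoff failure probability, of order $\exp(-c n e^{-2C_k|Z^n_1|})$, is absorbed by the $e^{-C'|Z^n_1|}$ tails supplied by the inductive hypothesis.
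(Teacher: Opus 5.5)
Your reduction to exponential moments of $L^{n,k}_1(t)$ and your finite-range case agree with the paper. The exponential case, however, does not close as written, and the obstruction is not the one you flag.

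\textbf{Where the argument breaks.} Large $|Z^n_1(t)|$ is harmless: the inductive hypothesis holds for every $C$, so $\PP(|Z^n_1(t)|>\epsilon\log n)=O(n^{-p})$ for every $p$. The real problem is the collective quantity $\frac1n\sum_j e^{-C_k|Z^n_j(t)|}$.

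You control it on $G_R\defeq\{\#\{j:|Z^n_j(t)|\le R\}\ge n/2\}$. But exchangeability and Markov only give $\PP(G_R^c)\le 2\PP(|Z^n_1(t)|>R)$, which is small in $R$ and does not decay in $n$. Exchangeability alone gives no concentration of empirical fractions, since mixtures of i.i.d.\ laws are exchangeable; even propagation of chaos would come without a rate.

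On $G_R^c$ your fallback $|L^{n,k}_1(t)|\le\max_j|Z^n_j(t)|$ only gives $\ex{e^{pC\max_j|Z^n_j(t)|}}\le n\,\ex{e^{pC|Z^n_1(t)|}}$. After H\"older, the contribution of $G_R^c$ is therefore bounded only by a constant times $n^{1/p}\PP(G_R^c)^{1-1/p}$, which is unbounded for fixed $R$. Letting $R=R_n\to\infty$ repairs this only at the price of the factor $e^{C_kR_n}$ in the main term.

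The same defect affects the event $\{\sum_jp_j<2\sqrt n\}$, on which you cannot use Chernoff. That event is contained in $G_R^c\cup\{|Z^n_1(t)|\ge\frac{1}{2C_k}\log n-R-c\}$, and the first piece again has no $n$-decay. In the multi-type case, your claim that a positive fraction of particles have both bounded position and an exponential-bound type reduces to the same non-decaying event.

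\textbf{The missing idea.} The paper uses a convexity argument that needs no exceptional events. It first reduces $\ex{e^{C|L^{n,k}_1(t)|}\indic{\Emc^n_Z}}$, via the computation borrowed from \cite{GanSpiSus25}, to $\sqrt{M(2C)}\,\ex{\indic{\Emc^n_Z}(P^n_{-2})^{-2}}^{1/2}$, where $M(C)=\limsup_n\ex{e^{C|Z^n_1(t)|}}$. No Chernoff step is needed. It then lower-bounds $P^n_{-2}$ using the exponential bound on the index set $U_n$ and applies AM--GM:
\[\Big(\frac{1}{|U_n|}\sum_{j\in U_n}e^{-C_k|Z^n_1(t)-Z^n_j(t)|}\Big)^{-1}\le\exp\Big(\frac{C_k}{|U_n|}\sum_{j\in U_n}|Z^n_1(t)-Z^n_j(t)|\Big).\]
The exponential of an average is then bounded by an average of exponentials (Lemma \ref{lem::Holder}, which handles the random index set). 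That average is controlled uniformly in $n$ by exchangeability and the inductive hypothesis. The type fraction is handled separately by the Hoeffding-based inverse moments of $|U_n|$ (Lemma \ref{lem::invbd}).

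\textbf{How to rescue your scheme.} Take $R$ to be the empirical median of $\{|Z^n_j(t)|\}_j$ instead of a constant. Then $\frac1n\sum_je^{-C_k|Z^n_j(t)|}\ge\frac12e^{-C_kR}$ always holds, and $e^{\lambda R}\le\frac2n\sum_je^{\lambda|Z^n_j(t)|}$, so $R$ has uniformly bounded exponential moments. The bad event $G_R^c$ disappears, and the Chernoff-failure event becomes $O(n^{-p})$, which beats the $n^{1/p}$ cost of your crude bound. This is the same convexity idea in another guise.
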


Proposition \ref{prop:uisuffconds} immediately follows from the exponential moment bound and exchangeability of $\{Z^n_i(t)\}_{i\in [1:n]}$.
\begin{proof}

\ind Our primary approach is to apply a joint inductive approach to establish \eqref{eqn::zsupexpLn} and the following equation:
\begin{equation}
\label{eqn::lsupexpLn}
\limsup_{n\to\infty} \ex{\exp\left(C|L^{n,k}_1(t)|\right)} < \infty \te{ for all } k \in [1:m], C > 0.
\end{equation}
The $t = 0$ base case of \eqref{eqn::zsupexpLn} follows from Assumption \ref{assu::ui}(a). For our first inductive argument, we show that \eqref{eqn::zsupexpLn} at time $t$ implies \eqref{eqn::lsupexpLn} at time $t$. In our second inductive argument, we then show that \eqref{eqn::zsupexpLn} and \eqref{eqn::lsupexpLn} at time $t$ together imply \eqref{eqn::zsupexpLn} at time $t+1$. These two combined imply the result. The proof of the first inductive argument is further subdivided as follows.

\ind Fix $k \in [1:m]$ and define $C_k > 0$ so that all bounds in Assumptions \ref{assu::bdsgen}-\ref{assu::bdsy} hold uniformly over $C_k$. 

\ind Let $\Emc$ denote the set of elements $y \in [1:q]$ for which there exists at least one element $y' \in [1:q]$ such that the model satisfies a $(y,y',k)$-exponential bound with respect to $C_k$. For each $y \in \Emc$, let $\Emc_y$ be the set of $y'\in [1:q]$ such that the model satisfies a $(y,y',k)$-exponential bound with respect to $C_k$. Finally, consider the event
\[\Emc^n_Z \defeq \{Y^n_1(t) \in \Emc\}\cap \left\{\te{there exists }j > 2\te{ s.t. } Y^n_j(t) \in \Emc_{Y^n_1(t)}\right\}.\]

$\Emc^n_Z$ tells us that there exists at least one particle in $[3:n]$ for which the $\Fmc^{A,n}_t$-conditional probability of a type $k$ interaction between that particle and particle 1 satisfies the exponential bound of Definition \ref{defn::Bbds}.

\begin{remark}
\label{rem:Emc}
Under Assumption \ref{assu::bdsgen}, $\Emc = [1:q]$ or $\Emc = \emptyset$. If $\Emc = \emptyset$, then $\Emc_y = \emptyset$ for every $y \in [1:q]$ and $\Emc^n_Z = \emptyset$. If $\Emc = [1:q]$, then $\Emc_y = [1:q]$ for all $y \in [1:q]$ and $\Emc^n_Z = \Omega$. Therefore, these sets and events only play a non-trivial role under Assumptions \ref{assu::ybds} and \ref{assu::bdsy}.
\end{remark}

\textbf{Proof of the first inductive argument:}
Define the function $\psi_C: \R^d \to \R$ by $\psi_C(z) = \exp(C|z|)$. It is easily verified that $\psi_C$ is convex in $z$ and increasing and superlinear in $|z|$ for all $C > 0$. Define $M:\R_+\to\R_+$ by
\begin{equation}
\label{eq:Mdef}
M(C) \defeq \limsup_{n\to\infty} \ex{\psi_C(Z^n_1(t))}.
\end{equation}

Then we may write

\[\limsup_{n\to\infty}\ex{\psi_C(L^{n,k}_1(t))} = \limsup_{n\to\infty}\ex{\psi_C(L^{n,k}_1(t))\indic{\Emc^n_Z}} + \limsup_{n\to\infty}\ex{\psi_C(L^{n,k}_1(t))\indic{(\Emc_Z^n)^c}},\]
so we may bound the two limits above separately.\\

\textbf{First Inductive Argument: The $\Emc^n_Z$ Case:}

\ind Fix $C > 0$. Then applying the convexity of $\psi_C$ and the fact that $\sum_{j=1}^n \frac{A^{n,k}_{1j}(t)}{\sum_{i=1}^n A^{n,k}_{1i}(t)} = 1$, we can break up the expectation of $\ex{\psi_C(L^{n,k}_1(t))}$ as follows:
\begin{equation}
\label{eqn::breakupsum}
\ex{\psi_C(L^{n,k}_1(t))} = \ex{\psi_C\left(\frac{1}{\sum_{i=1}^n A^{n,k}_{1i}(t)}\sum_{j=1}^n A^{n,k}_{1j}(t)Z^n_j(t)\right)} \leq \sum_{j=1}^n\ex{\frac{A^{n,k}_{1j}(t)}{\sum_{i=1}^n A^{n,k}_{1i}(t)}\psi_C\left(Z^n_j(t)\right)}.
\end{equation}
The $j=1$ term of the sum above is easily reduced using the fact that $A^{n,k}_{11}(t)=0$ (and using the $0/0 = 0$ convention), so:
\begin{equation}
\label{eqn::j=1breakup}
\limsup_{n\to\infty}\ex{\frac{A^{n,k}_{11}(t)}{\sum_{i=1}^n A^{n,k}_{1i}(t)}\psi_C\left(Z^n_1(t)\right)}=0.
\end{equation}
We can now examine the remaining terms. Recall the $\sigma$-algebras $\Fmc^{A,n}_t \defeq \sigma(Z^n[t],Y^n[t],A^n[t-1])$ if $t > 0$, where $\Fmc^{A,n}_0 \defeq \sigma(Z^n(0),Y^n(0))$. For $j \in [1:n]$, \eqref{eqn::Anevolve} and Assumption \ref{assu::init}(b) imply
\[P^n_j \defeq \ex{A^{n,k}_{1j}(t)|\Fmc^{A,n}_t} = \begin{cases}
B\left(Z^n_{\{1,j\}}(t),Y^n_{\{1,j\}}(t),A^n_{1j}(t-1),k\right)  &\te{ if } t > 0\te{ and }j\neq 1,\\
B_0(Z^n_{\{1,j\}}(0),Y^n_{\{1,j\}}(0))&\te{ if } t = 0\te{ and }j\neq 1,\\
0 &\te{ if } j = 1.
\end{cases}\]
Define
\[P^n_{-2} =\frac{1}{n-2}\sum_{i> 2} P^n_i.\]
Note that $\Emc^n_Z$ is $\Fmc^{A,n}_t$-measurable and $\Emc_{Y^n_1(t)}$ is a $\Fmc^{A,n}_t$-measurable random subset of $[1:q]$. Moreover, $\Emc^n_Z \subseteq \{P^n_{-2} > 0\}$. Therefore, if we multiply the expression inside the expectation by $\indic{\Emc^n_Z}$, we may repeat nearly the exact computation from (7.6)-(7.12) of \cite{GanSpiSus25} to get
\begin{equation}
\label{eq:AZtoPZ}
\limsup_{n\to\infty} \sum_{j=2}^n \ex{\frac{A^{n,k}_{1j}(t)}{\sum_{i=1}^n A^{n,k}_{1i}(t)}\psi_C(Z^n_j(t))\indic{\Emc^n_Z}} \leq \sqrt{M(2C)}\limsup_{n\to\infty}\sqrt{\ex{\frac{\indic{\Emc^n_Z}}{(P^n_{-2})^2}}}.
\end{equation}

By definition of $\Emc^n_Z$ and Definition \ref{defn::Bbds}, there exists a constant $\bar{C}> 0$ such that for all $n \in \N$ and $j \in [1:n]$,
\[P^n_j \geq \bar{C}\exp\left(-C_k|Z^n_1(s) - Z^n_j(s)|\right)\indic{Y^n_j(t)\in \Emc_{Y^n_1(t)}},\]
and on the event $\Emc^n_Z$, $P^n_j > 0$ for at least one value of $j$. It is therefore useful to keep track of the values of $j$ for which $Y^n_j(t) \in \Emc_{Y^n_1(t)}$:
\[U_n \defeq \{j \in [3:n]: Y^n_j(t) \in \Emc_{Y^n_1(t)}\},\]
By definition, $U_n$ is a random subset of $[3:n]$, and on the event $\Emc^n_Z$, $U_n$ is nonempty. Therefore,
\[P^n_{-2}\indic{\Emc^n_Z} \geq \frac{\bar{C}\indic{\Emc^n_Z}}{n-2}\sum_{j \in U_n} \exp\left(-C_k|Z^n_1(s)-Z^n_j(s)|\right).\]
We can now perform the following computation, where we apply the AM-GM inequality once more in \eqref{eqn::PsqinvAMGM}.
\begin{align}
\frac{\indic{\Emc^n_Z}}{(P^n_{-2})^2} &\leq \frac{\indic{\Emc^c_Z}}{\left(\frac{\bar{C}}{n-2}\sum_{j\in U_n} \exp\left(-C_k|Z^n_1(t) - Z^n_j(t)|\right)\right)^2}\nonumber\\
&=\frac{\indic{\Emc^n_Z}}{\frac{\bar{C}^2}{(n-2)^2}\sum_{i,j\in U_n} \exp\left(-C_k\left(|Z^n_1(t) - Z^n_i(t)| + |Z^n_1(t) - Z^n_j(t)|\right)\right)}\nonumber\\
&=\left(\frac{1}{\bar{C}^2}\right)\frac{\indic{\Emc^n_Z}}{\frac{1}{(n-2)^2}\sum_{i,j\in U_n} \exp\left(-C_k\left(|Z^n_1(t) - Z^n_i(t)| + |Z^n_1(t) - Z^n_j(t)|\right)\right)}\nonumber\\
&=\left(\frac{1}{\bar{C}^2}\right)\left(\frac{n-2}{|U_n|}\right)^2\frac{\indic{\Emc^n_Z}}{\frac{1}{|U_n|^2}\sum_{i,j\in U_n} \exp\left(-C_k\left(|Z^n_1(t) - Z^n_i(t)| + |Z^n_1(t) - Z^n_j(t)|\right)\right)}\nonumber\\
&\leq\left(\frac{1}{\bar{C}^2}\right)\left(\frac{n-2}{|U_n|}\right)^2\frac{\indic{\Emc^n_Z}}{\exp\left(-\frac{C_k}{|U_n|^2}\sum_{i,j\in U_n}\left(|Z^n_1(t) - Z^n_i(t)| + |Z^n_1(t) - Z^n_j(t)|\right)\right)}\label{eqn::PsqinvAMGM}\\
&=\left(\frac{\indic{\Emc^n_Z}}{\bar{C}^2}\right)\left(\frac{n-2}{|U_n|}\right)^2\exp\left(\frac{C_k}{|U_n|^2}\sum_{i,j\in U_n}\left(|Z^n_1(t) - Z^n_i(t)| + |Z^n_1(t) - Z^n_j(t)|\right)\right)\nonumber\\
&=\left(\frac{\indic{\Emc^n_Z}}{\bar{C}^2}\right)\left(\frac{n-2}{|U_n|}\right)^2\exp\left(\frac{C_k(2|U_n|-1)}{|U_n|^2}\sum_{j\in U_n}|Z^n_1(t) - Z^n_j(t)|\indic{j\in U_n}\right)\\
&\leq\left(\frac{\indic{\Emc^n_Z}}{\bar{C}^2}\right)\left(\frac{n-2}{|U_n|}\right)^2\exp\left(\frac{2C_k}{|U_n|}\sum_{j\in U_n}|Z^n_1(t) - Z^n_j(t)|\right).\label{eqn::Psqinv}
\end{align}

To bound the above term in expectation, we introduce the following lemmas which we prove in the next section.

\begin{lemma}[Random H\"{o}lder's Inequality]
\label{lem::Holder}
Let $I$ be a finite index set. Let $S$ be a random subset of $I$, and let $\{X_i\}_{i \in I}$ be a sequence of random variables such that the sequence $\{(X_i,\indic{i\in S})\}_{i\in I}$ is exchangeable. Then for any $j \in I$,
\[\ex{\prod_{i \in S} X_i^{1/|S|}}\leq \ex{|S|}\ex{\frac{X_j}{|S|}\middle|j \in S}+1.\]
\end{lemma}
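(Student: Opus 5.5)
The plan is to combine a pointwise AM--GM bound with exchangeability. Throughout I take the $X_i$ to be nonnegative, which is implicit in the statement since the fractional powers $X_i^{1/|S|}$ must make sense. In the application to \eqref{eqn::Psqinv} they are exponentials, so this holds there. I also use the convention that the empty product equals $1$; this is exactly where the ``$+1$'' in the bound comes from.

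\textbf{Pointwise bound.} Split according to whether $S$ is empty. On $\{S=\emptyset\}$ the product is $1$. On $\{S\neq\emptyset\}$ the AM--GM inequality gives
\[\prod_{i\in S}X_i^{1/|S|}\leq \frac{1}{|S|}\sum_{i\in S}X_i=\sum_{i\in I}\frac{X_i\indic{i\in S}}{|S|},\]
with the convention $0/0=0$ on the empty set. Combining the two cases,
\[\prod_{i\in S}X_i^{1/|S|}\leq \sum_{i\in I}\frac{X_i\indic{i\in S}}{|S|}+\indic{S=\emptyset}.\]
Taking expectations and bounding $\PP(S=\emptyset)\leq 1$ yields
\[\ex{\prod_{i\in S}X_i^{1/|S|}}\leq\sum_{i\in I}\ex{\frac{X_i\indic{i\in S}}{|S|}}+1.\]

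\textbf{Using exchangeability.} The key observation is that $|S|=\sum_{l\in I}\indic{l\in S}$ is a symmetric function of the indicators. Exchangeability of $\{(X_i,\indic{i\in S})\}_{i\in I}$ therefore implies that the law of the triple $(X_i,\indic{i\in S},|S|)$ does not depend on $i$. Hence each summand above equals
\[\ex{\frac{X_j\indic{j\in S}}{|S|}}=\PP(j\in S)\,\ex{\frac{X_j}{|S|}\middle|j\in S}.\]
The same symmetry gives $\PP(i\in S)=\PP(j\in S)$ for all $i$, so
\[|I|\,\PP(j\in S)=\sum_{i\in I}\PP(i\in S)=\ex{|S|}.\]
Substituting, the sum over $i\in I$ becomes $\ex{|S|}\,\ex{X_j/|S|\,|\,j\in S}$, which is the claimed bound.

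\textbf{Degenerate case.} If $\PP(j\in S)=0$, the conditional expectation should be read as $0$. In that case every summand vanishes, $\ex{|S|}=0$, and the bound reduces to $\ex{\prod_{i\in S}X_i^{1/|S|}}\leq 1$, which holds because $S=\emptyset$ a.s.

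There is no real obstacle in this argument. The only points needing care are the empty-set convention, nonnegativity, and checking that $|S|$ is exchangeable jointly with the pairs so that the summands are identical.
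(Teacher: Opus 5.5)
Your proof is correct and follows essentially the same route as the paper: a pointwise AM--GM bound (the paper adds the constant $1$ unconditionally where you use $\indic{S=\emptyset}$), exchangeability to make all $|I|$ summands equal, and the identity $|I|\,\PP(j\in S)=\ex{|S|}$, which the paper obtains from $\PP(j\in S\mid |S|)=|S|/|I|$ and you obtain directly by linearity. Your explicit treatment of three points fills in what the paper leaves implicit: nonnegativity of the $X_i$, the fact that $(X_i,\indic{i\in S},|S|)$ has the same law for every $i$, and the degenerate case $\PP(j\in S)=0$.
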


\begin{lemma}
\label{lem::invbd}
For any $\ell \in \N$,
\[\limsup_{n\to\infty}\ex{\left(\frac{n-2}{|U_n|}\right)^{\ell}\indic{|U_n|>0}} \defeq \limsup_{n\to\infty} \Mmc_{n,\ell} < \infty.\]
Moreover, for $n$ sufficiently large, $\ex{|U_n|} \geq (n-2)\delta^2$.
\end{lemma}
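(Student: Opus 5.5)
We will reduce the statement to a Binomial lower-tail estimate. The idea is to show that, conditionally on the type of particle $1$, the random set $U_n$ dominates a Binomial$(n-2,\delta)$ random set. First we dispose of the general case of Assumption \ref{assu::bdsgen}. By Remark \ref{rem:Emc}, either $\Emc=\emptyset$, in which case $U_n=\emptyset$ and both claims are vacuous for our purposes, or $\Emc_y=[1:q]$ for every $y$, in which case $U_n=[3:n]$ deterministically. In the latter case $(n-2)/|U_n|=1$ and $\ex{|U_n|}=n-2\geq (n-2)\delta^2$. So the substance is the case where Assumptions \ref{assu::ybds} and \ref{assu::bdsy} hold.

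\textbf{Conditional independence and lower bound.} The key step is to identify a $\sigma$-algebra $\Gmc$ with the following two properties. First, $Y^n_1(t)$ is $\Gmc$-measurable. Second, given $\Gmc$, the variables $\{Y^n_j(t)\}_{j\geq 3}$ are conditionally independent, each with $\PP(Y^n_j(t)=y\mid\Gmc)\geq\delta$ for every $y$.
\begin{itemize}
\item In the mixing case (Assumption \ref{assu::ybds}(ii)), we take $\Gmc=\Fmc^{Y,n}_t\vee\sigma(Y^n_1(t))$. Assumption \ref{assu::cond}(b) together with \eqref{eqn::peqn} (or Assumption \ref{assu::init}(b) if $t=0$) gives conditional independence given $\Fmc^{Y,n}_t$. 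Since $Y^n_1(t)$ is one of these independent coordinates, adjoining it does not change the conditional laws of the others. The transition probabilities $P(\cdot,\cdot,y)\geq\delta$ (or $P_0\geq\delta$) then give the lower bound.
\item In the static case (Assumption \ref{assu::ybds}(i)), $Y^n_j(t)=Y^n_j(0)$ a.s. for all $j$. We then take $\Gmc=\sigma(Z^n(0),Y^n_1(0))$ and use $P_0\geq\delta$ together with Assumption \ref{assu::init}(b).
\end{itemize}
In either case, on $\{\Emc_{Y^n_1(t)}\neq\emptyset\}$, the indicators $\indic{j\in U_n}$, $j\in[3:n]$, are conditionally independent given $\Gmc$, each with conditional success probability at least $\delta$. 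Hence, conditionally on $\Gmc$, $|U_n|$ stochastically dominates $\mathrm{Bin}(n-2,\delta)$. On $\{\Emc_{Y^n_1(t)}=\emptyset\}$ we have $U_n=\emptyset$, so such $\omega$ contribute nothing to $\Mmc_{n,\ell}$.

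\textbf{Bounding $\Mmc_{n,\ell}$.} We split the expectation according to whether $|U_n|\geq \delta(n-2)/2$.
\begin{itemize}
\item On the event $\{|U_n|\geq\delta(n-2)/2\}$, the integrand is at most $(2/\delta)^\ell$.
\item On the event $\{0<|U_n|<\delta(n-2)/2\}$, the integrand is at most $(n-2)^\ell$. By stochastic domination and a Chernoff (Hoeffding) bound, this event has probability at most $\exp(-\delta^2(n-2)/2)$.
\end{itemize}
Therefore
\[\Mmc_{n,\ell}\leq (2/\delta)^\ell+(n-2)^\ell e^{-\delta^2(n-2)/2},\]
whose limsup is $(2/\delta)^\ell<\infty$. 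The only delicate point is that the map $x\mapsto((n-2)/x)^\ell\indic{x>0}$ is not monotone at $0$. The splitting above avoids this issue, because only upper tail probabilities of the form $\PP(|U_n|<c)$ are needed, and these are monotone under domination.

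\textbf{Lower bound on $\ex{|U_n|}$.} By exchangeability (Lemma \ref{lem::jexch}),
\[\ex{|U_n|}=(n-2)\,\PP\left(Y^n_3(t)\in\Emc_{Y^n_1(t)}\right).\]
By the conditional bound, this probability is at least $\delta\,\PP(Y^n_1(t)\in\Emc)$. Moreover, $\PP(Y^n_1(t)\in\Emc)\geq\delta$ whenever $\Emc\neq\emptyset$, again since every type has conditional probability at least $\delta$ (in the static case, via $P_0\geq\delta$). This gives $\ex{|U_n|}\geq (n-2)\delta^2$. We expect the main obstacle to be the careful verification of the conditional independence step in the two cases. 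The static case in particular requires checking that static types reduce the time-$t$ types to the time-$0$ conditional structure.
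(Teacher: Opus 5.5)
Your proof is correct and follows the same skeleton as the paper. The general case is handled via $|U_n|\in\{0,n-2\}$. In the static/mixing case you use conditional independence of the types together with a Hoeffding lower-tail bound, splitting $\Mmc_{n,\ell}$ at a constant fraction of $n-2$.

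Where you differ is the conditioning $\sigma$-algebra, and the difference matters. The paper conditions only on $\Fmc^{Y,n}_t$. It then asserts that the indicators $\indic{Y^n_i(t)\in\Emc_{Y^n_1(t)}}$, $i\geq 3$, are independent under $\PP(\cdot\mid\Fmc^{Y,n}_t)$, and that $\PP(Y^n_i(t)=y\mid\Fmc^{Y,n}_t)>\delta$ for every $y$. Both assertions have problems:
\begin{itemize}
\item All of these indicators depend on the common variable $Y^n_1(t)$, which is not $\Fmc^{Y,n}_t$-measurable. So in general they are conditionally independent only after one also conditions on $Y^n_1(t)$.
\item In the static case, $\PP(Y^n_i(t)=y\mid\Fmc^{Y,n}_t)=\indic{y=Y^n_i(t-1)}$ for $t>0$, so the uniform lower bound fails there.
\end{itemize}

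Your choices repair both points. In the mixing case you adjoin $Y^n_1(t)$ to $\Fmc^{Y,n}_t$. In the static case you use $Y^n_j(t)=Y^n_j(0)$ a.s. and condition on $\sigma(Z^n(0),Y^n_1(0))$. In both cases you get genuinely conditionally independent Bernoulli indicators with success probability at least $\delta$, so Hoeffding applies legitimately. As a bonus, your constant $(2/\delta)^\ell$ improves on the paper's $(2/\delta^2)^\ell$.

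You are also right to restrict the bound $\ex{|U_n|}\geq(n-2)\delta^2$ to $\Emc\neq\emptyset$. When $\Emc=\emptyset$ the claim is literally false, since then $U_n=\emptyset$; the paper tacitly assumes $\Emc\neq\emptyset$ by fixing some $y\in\Emc$. As you note, that case is harmless for the application, because $\Emc^n_Z=\emptyset$ there.
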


We also apply the following simple argument. By exchangeability of $\{\indic{i \in U_n}\}_{i=3}^n$ and by Lemma \ref{lem::invbd}, $\PP(3 \in U_n) = \ex{\PP(3 \in U_n| |U_n|)} = \frac{\ex{|U_n|}}{n-2} \geq \delta/2$ for $n$ sufficiently large. So for any non-negative random variable $X$,
\begin{equation}
\label{eq::condbd}
\ex{X|3 \in U_n}\leq \frac{\ex{X}}{\PP(3\in U_n)} \leq \frac{\ex{X}}{\delta^2}.
\end{equation}

Assume that $n$ is sufficiently large for the statements of Lemma \ref{lem::invbd} to hold. Below, in \eqref{eq::longholder}, we apply H\"older's inequality. In \eqref{eq::mmcx}, we apply Lemma \ref{lem::invbd} and we prepare to apply Lemma \ref{lem::Holder} with $I = [3:n]$, $S = U_n$ and $X_i = \exp(4C_k|Z^n_1(t)-Z^n_i(t)|)$. The exchangeability of $(X_i,\indic{i \in S})$ over $i \in I$ follows from the exchangeability excluding 1 of $(Z_i,Y_i,A_{1i})$, $i = 2,\dots,n$ and the fact that $|U_n|$ and $\Emc^n_Z$ are invariant with respect to permutations of vertices in $[3:n]$. In \eqref{eq::2delt}, we apply \eqref{eq::condbd} twice and use the fact that $|U_n|\leq n-2$. In \eqref{eq::mmcn2}, we again apply Lemma \ref{lem::invbd}. Finally, in \eqref{eq::longfinish}, we apply the argument from (7,16)-(7.19) of \cite{GanSpiSus25}.

\begin{align}
\nonumber
\ex{\frac{\indic{\Emc^n_Z}}{(P^n_{-2})^2}} &\leq \ex{\left(\frac{\indic{\Emc^n_Z}}{\bar{C}^2}\right)\left(\frac{n-2}{|U_n|}\right)^2\exp\left(\frac{2C_k}{|U_n|}\sum_{j\in U_n}|Z^n_1(t) - Z^n_j(t)|\indic{j\in U_n}\right)}\\
\label{eq::longholder}
&\leq \ex{\left(\frac{n-2}{|U_n|\bar{C}}\right)^4\indic{\Emc^n_Z}}\ex{\exp\left(\frac{4C_k}{|U_n|}\sum_{j \in U_n} |Z^n_1(t) - Z^n_j(t)|\right)}\\
\label{eq::mmcx}
&\leq \Mmc_{n,4}\ex{\prod_{i \in S}X_i^{1/|S|}}\\\nonumber
&\leq \Mmc_{n,4}\ex{|S|}\ex{\frac{X_3}{|S|}\middle|3 \in S} + \Mmc_{n,4}\\\nonumber
&\leq \Mmc_{n,4}\ex{|U_n|}\ex{\frac{1}{|U_n|}\exp(4C_k|Z^n_1(t) - Z^n_3(t)|)\middle|3 \in U_n}+\Mmc_{n,4}\\\nonumber
&\leq \Mmc_{n,4}\ex{|U_n|}\ex{\frac{1}{|U_n|^2}\middle|3\in U_n}\ex{\exp\left(8C_k|Z^n_1(t)-Z^n_3(t)|\right)\middle|3 \in U_n}+\Mmc_{n,4}\\
\label{eq::2delt}
&\leq \Mmc_{n,4}\ex{\frac{n-2}{|U_n|^2}}(1/\delta^2)^2\ex{\exp(8C_k|Z^n_1(t) - Z^n_3(t)|)}+\Mmc_{n,4}\\
\label{eq::mmcn2}
&\leq \Mmc_{n,4}\Mmc_{n,2}(1/\delta^4)\ex{\exp(8C_k|Z^n_1(t) - Z^n_3(t)|)}+\Mmc_{n,4}\\
\label{eq::longfinish}
&\leq \frac{\Mmc_{n,4}\Mmc_{n,2}M(16C_k)}{\delta^4}+\Mmc_{n,4}.
\end{align}

Combining \eqref{eqn::breakupsum}, \eqref{eqn::j=1breakup}, \eqref{eq:AZtoPZ} and \eqref{eq::longfinish} yields

\begin{align*}
\limsup_{n\to\infty} \ex{\psi_C(L^{n,k}_1(t))\indic{\Emc^n_Z}} &\leq \sum_{j=1}^n \ex{\frac{A^{n,k}_{1j}(t)}{\sum_{i=1}^n A^{n,k}_{1i}(t)}\psi_C(Z^n_j(t))\indic{\Emc^n_Z}}\\
&= \sqrt{M(2C)}\limsup_{n\to\infty} \sqrt{\ex{\frac{\indic{\Emc^n_Z}}{(P^n_{-2})^2}}}\\
&= \sqrt{M(2C)}\limsup_{n\to\infty} \left(\frac{\Mmc_{n,4}\Mmc_{n,2}M(16C_k)}{\delta^4}+\Mmc_{n,4}\right)\\
&< \infty.
\end{align*}

\textbf{First Inductive Argument: The $(\Emc^n_Z)^c$ Case:} 

Consider the event $(\Emc^n_Z)^c$. When this event is true, it follows that for each $i$, either the $\Fmc^{A,n}_t$-conditional probability of a type $k$ interaction between particles $1$ and $i$ satisfies a finite range bound in the sense of Definition \ref{defn::Bbds}, or $1$ and $i$ do not have a type $k$ interaction. Therefore, if $A^n_{1i}(t) = k$, then $|Z^n_j(t)|\leq |Z^n_1(t)| + C_k$. Applying the convention that $0/0 = 0$, it follows that
\begin{align*}
\left|L^{n,k}_1(t)\right| &\leq \frac{1}{\sum_{j=1}^n A^{n,k}_{1j}(t)}\sum_{j=1}^n \left|Z^n_j(t)\right|A^{n,k}_{1j}(t) \\
&\leq\frac{1}{\sum_{j=1}^n A^{n,k}_{1j}(t)}\sum_{j=1}^n\left( \left|Z^n_1(t)\right|+C_k\right)A^{n,k}_{1j}(t)\\
&= \left|Z^n_1(t)\right| + C_k. 
\end{align*}
Thus, for any $C > 0$,
\[\limsup_{n\to\infty} \ex{\psi_C\left(L^{n,k}_1(t)\right)\indic{(\Emc^n_Z)^c}}\leq \left(\limsup_{n\to\infty} \ex{\psi_C\left(Z^n_1(t)\right)}\right)\exp(CC_k) = M(C)\exp(CC_k) < \infty.\]\\

\textbf{Second Inductive Argument:} We show that \eqref{eqn::zsupexpLn} and \eqref{eqn::lsupexpLn} at time $s$ imply \eqref{eqn::zsupexpLn} at time $s+1$. Note that
\[Z^n_1(t+1) - \xi^1(t) = \Lambda_0(Y^n_1(t))Z^n_1(t) + \sum_{k=1}^m \Lambda_k(Y^n_1(t)) L^{n,k}_1(t).\]
For each $k \in [m]$, let $N_{n,k}=\|\Lambda_k(Y^n_1(t))\|_{\te{op}}$. Then for any vector $v \in \R^d$, $|\Lambda_kv|\leq N_{n,k}|v|< \infty$. In addition, if we define $N_n \defeq \sum_{k=0}^m N_{n,k}$, then for each $n$, either $N_n > 0$ or $Z^n_1(t+1) = \xi^1(t)$ (in which case the result is trivial). $N_{n,k}$, $n \in \N,k\in[m]$, is a sequence of finite random variables with a fixed support of size at most $q$ (depending on $k$ but not depending on $n$), so there exists a constant $\bar{N}<\infty$ such that $N_{n,k} \leq \bar{N}$ a.s. for all $n \in \N$ and $k \in [m]$. If $N_n = 0$ a.s., then $Z^n_1(s+1) = \xi^1(t)$, so uniform integrability follows trivially from Assumption \ref{assu::ui}(b). Therefore, we assume that $N_n > 0$ with positive probability and let $\underline{N}$ be the minimal positive value in the support of all $N_n$.

\ind Utilizing the convexity of the exponential function, the independence of $(Z^n(t+1)-\xi^1(t))$ and $\xi^1(t)$, and Assumption \ref{assu::ui}(b), we make the following computation:
\begin{align*}
&\limsup_{n\to\infty}\ex{\psi_C(Z^n_1(s+1))} \\
& \leq \ex{\psi_C(\xi^1(t))}\limsup_{n\to\infty}\ex{\psi_C(Z^n_1(t+1)-\xi^1(t))}\\
&= \ex{\psi_C(\xi^1(t))}\limsup_{n\to\infty}\ex{\psi_C\left(\Lambda_0(Y^n_1(t))Z^n_1(t) + \sum_{k=1}^m \Lambda_k(Y^n_1(t))L^{n,k}_1(t)\right)}\\
&\leq \ex{\psi_C(\xi^1(t))}\limsup_{n\to\infty}\ex{\exp\left(CN_n\left(\frac{N_{n,0}}{N_n}|Z^n_1(t)| + \sum_{k=1}^m \frac{N_{n,k}}{N_n}|L^{n,k}_1(t)|\right)\indic{N_n>0} +\indic{N_n=0}\right)}\\
&\leq \ex{\psi_C(\xi^1(t))}\\
&\ind\ind\limsup_{n\to\infty}\Emb\Bigg[\frac{N_{n,0}}{N_n}\exp\left(CN_n\left(|Z^n_1(t)|\right)\right)\indic{N_n>0} + \sum_{k=1}^m \frac{N_{n,k}}{N_n}\exp\left(CN_n\left(|L^{n,k}_1(t)|\right)\right)\indic{N_n>0} + e\indic{N_n=0}\Bigg]\\
&\leq \ex{\psi_C(\xi^1(t))}\limsup_{n\to\infty}\ex{\frac{N_{n,0}}{N_{n}}\psi_{CN_{n}}\left(Z^n_1(t)\right)\indic{N_n>0} + \sum_{k=1}^m \frac{N_{n,k}}{N_{n}}\psi_{CN_{n}}\left(L^{n,k}_1(t)\right)\indic{N_n>0} + e\indic{N_n = 0}}\\
&\leq \ex{\psi_C(\xi^1(t))}\left(\frac{\bar{N}}{\underline{N}}M(Cm\bar{N}) + \sum_{k=1}^m \frac{\bar{N}}{\underline{N}}\frac{\sqrt{M(2Cm\bar{N})M(4C_b)}}{C_Z}+e\right)\\
&< \infty,
\end{align*}
where in the fourth and fifth lines of the computation, we abuse notation slightly and apply the convention that $0/0 = 0$ on the event $\{N_n = 0\}$.
\end{proof}

\subsection{Technical Lemmas}
\label{ssec::uitech}

\begin{proof}[Proof of Lemma \ref{lem::Holder}:]
Fix any $j \in I$. Applying the AM-GM inequality and the exchangeability of $(X_j,\indic{j \in S})$, $j \in I$,
\begin{align*}
\ex{\prod_{i\in S} X_i^{1/|S|}} &\leq \ex{1+\frac{\indic{|S|>0}}{|S|}\sum_{i \in S} X_i}\\
&= 1 + \sum_{i\in I}\ex{\frac{\indic{|S|>0}X_i\indic{i \in S}}{|S|}}\\
&= 1 + |I|\ex{\frac{\indic{|S|>0}X_j\indic{j \in S}}{|S|}}\\
&= 1 + |I|\PP(j \in S)\ex{\frac{X_j}{|S|}\middle|j \in S}.
\end{align*}
Applying the tower property and the exchangeability of $\{\indic{j \in S}\}_{j \in I}$,
\[\PP(j \in S) = \ex{\PP\left(j \in S\middle||S|\right)} = \ex{\frac{|S|}{|I|}} = \frac{\ex{|S|}}{|I|}.\]
Combining the above displays, it follows that
\[\ex{\prod_{i\in S} X_i^{1/|S|}} \leq 1 + \ex{|S|}\ex{\frac{X_j}{|S|}\middle|j \in S}\]
as desired.
\end{proof}

\begin{proof}[Proof of Lemma \ref{lem::invbd}:]
Under Assumption \ref{assu::bdsgen}, $|U_n|$ is a random variable with support $\{n-2,0\}$. Either way, $\left(\frac{n-2}{|U_n|}\right)^{\ell}\indic{|U_n|>0} \leq 1$, so $\Mmc_{n,\ell}\leq 1$ and the proof is complete. For the remainder of the proof, we operate under Assumptions \ref{assu::ybds} and \ref{assu::bdsy}.

\ind Let $X^n_i = \indic{i \in U_n}$. Then $|U_n| = \sum_{i=3}^n X_i$. Moreover, Lemma \ref{lem::jexch} implies that the random elements $(Z^n[t],Y^n[t],A^n[t])$ are jointly exchangeable, which implies that $\{X^n_i\}_{i=3}^n$ is exchangeable. To see why, let $\sigma$ be any permutation of the set $[3:n]$. We slightly abuse notation by regarding $\sigma$ as a permutation of $[1:n]$ such that $\sigma(1) = 1$ and $\sigma(2) = 2$. Then,
\[\{X^n_{\sigma(i)}\}_{i=3}^n = \{\indic{Y^n_{\sigma(i)}(t) \in \Emc_{Y^n_1(t)}}\}_{i=3}^n \deq \{\indic{Y^n_{i}(t) \in \Emc_{Y^n_1(t)}}\}_{i=3}^n = \{X^n_i\}_{i=3}^n.\]

\ind As described in the paragraph above \eqref{eqn::peqn}, $\{Y^n_i(t)\}_{i=1}^n$ are assumed to be conditionally independent given $\Fmc^{Y,n}_t$. So if $\PP_n$ and $\Emb_n$ represent the probability and expectation operators conditioned on $\Fmc^{Y,n}_t$, then $\{X^n_i\}_{i=3}^n$ are independent with respect to $\PP_n$. Define 
\[\bar{U}_n \defeq \frac{1}{n-2}\exmu{n}{|U_n|} = \frac{1}{n-2}\sum_{i=3}^n\PP_n\left(X^n_i = 1\right).\]
Note that Assumptions \ref{assu::init}(b), \ref{assu::ybds}, and \eqref{eqn::peqn} all imply the existence of a $\delta > 0$ such that $\PP_n(Y^n_i(t) = y) > \delta$ for all $n \in \N$, $i \in [1:n]$, $t \geq 0$ and $y \in [1:q]$. Therefore, fixing any $y \in \Emc$ and $y' \in \Emc_y$ and applying the conditional independence of $\{Y^n_i(t)\}_{i=3}^n$ given $\Fmc^{Y,n}_t$ described above \eqref{eqn::peqn} (or in Assumption \ref{assu::init}(b) if $t = 0$), 
\[\PP_n(X^n_i = 1) = \PP_n\left(Y^n_1(t) \in \Emc, Y^n_i(t)\in \Emc_{Y^n_1(t)}\right) \geq \PP_n(Y^n_1(t) = y)\PP_n(Y^n_i(t) = y') > \delta^2.\]
So,
\[\bar{U}_n = \exmu{n}{|U_n|} = \frac{1}{n-2}\sum_{i=3}^n \exmu{n}{X^n_i} \geq \delta^2.\]

\ind For $n > \frac{2}{\delta^2}$, the following holds by Hoeffding's inequality:
\begin{equation}
\label{eq::uinvbd1}
\exmu{n}{\left(\frac{n-2}{|U_n|}\right)^{\ell}\indic{1 \leq |U_n|< \frac{(n-2)\bar{U}_n}{2}}} \leq (n-2)^{\ell}\exp\left(-\frac{2\left(\frac{(n-2)\bar{U}_n}{2}\right)^2}{n-2}\right) < (n-2)^{\ell}\exp\left(-\frac{(n-2)\delta^4}{2}\right).
\end{equation}

Likewise, 

\begin{equation}
\label{eq::uinvbd2}
\exmu{n}{\left(\frac{n-2}{|U_n|}\right)^{\ell}\indic{|U_n|\geq \frac{(n-2)\bar{U}_n}{2}}} \leq \left(\frac{2}{\delta^2}\right)^{\ell}.
\end{equation}

Combining equations \eqref{eq::uinvbd1}-\eqref{eq::uinvbd2},

\[\limsup_{n\to\infty} \ex{\left(\frac{n-2}{|U_n|}\right)^{\ell}\indic{|U_n|>0}}\leq \left(\frac{2}{\delta^2}\right)^{\ell} < \infty.\]

This completes the proof of the first claim. For the second claim, 
\[\ex{|U_n|} = \ex{\sum_{i=3}^n X^n_i} > (n-2)\delta^2.\]
\end{proof}

\section{Density Function Derivation}
\label{sec::fnzsjust}
In this section, we derive the density function described in \eqref{eqn::fnzsdef}. Let $X_1 = V[s-1], X_2 = Z^n_{1,2}(s) - \xi^{1,2}(s)$, and $X_3 = \xi^{1,2}(s)$. Then $X_1$ and $X_3$ are absolutely continuous with respective densities $f_1\defeq f^n_{s-1}$ and $f_3\defeq f_{\xi,s}$. Furthermore, $(X_1,X_2)$ is independent of $X_3$. Let $X_4 = X_2 + X_3$. Then the claim is that the joint density of $X_1$ and $X_4$ is given by $f_{14}(x_1,x_4) = f_1(x_1)\ex{f_3(x_4-X_2)|X_1=x_1}$, where $\ex{\cdot|X_1 =x_1}$ is the expectation taken with respect to the regular conditional distribution of $X_2$ given $X_1$ evaluated at $X_1 = x_1$. To verify this, fix any bounded, measurable functions $g_1: \Xmc_1\defeq \mf_{s-1}(\R^d)^2\times\mf_{s-1}([1:q])^2\times\mf_{s-1}([m])\to \R$ and $g_4: \R^d \to \R$. Then,
\begin{align*}
\int_{\Xmc_1\times\R^d} g_1(x_1)g_4(x_4)f_1(x_1)&\ex{f_3(x_4-X_2)|X_1=x_1}\,\ell_{s-1}(dx_1)\,dx_4 \\
&= \int_{\R^d}g_4(x_4)\int_{\Xmc_1} g_1(x_1) \ex{f_3(x_4-X_2)|X_1=x_1}f(x_1)\,\ell_{s-1}(dx_1)\,dx_4\\
&= \int_{\R^d} g_4(x_4)\ex{g_1(X_1)\ex{f_3(x_4-X_2)|X_1}}\,dx_4\\
&=\int_{\R^d} g_4(x_4)\ex{g_1(X_1)f_3(x_4-X_2)}\,dx_4\\
&=\ex{g_1(X_1)\int_{\R^d} g_4(x_4)f_3(x_4-X_2)\,dx_4}\\
&=\ex{g_1(X_1)\int_{\R^d} g_4(x_3+X_2)f_3(x_3)\,dx_3}\\
&=\ex{g_1(X_1)\ex{\int_{\R^d}g_4(x_3+X_2)f_3(x_3)\,dx_3|X_1,X_2}}\\
&=\ex{g_1(X_1)\ex{g_4(X_3+X_2)|X_1,X_2}}\\
&= \ex{g_1(X_1)g_4(X_4)}.
\end{align*}
The second equality above is a consequence of the definition of a regular conditional distribution. The fourth to last equality follows from a simple change of variable of the form $x_3 = x_4-X_2$. The other steps involve Fubini's Theorem and the tower property of conditional expectations.

\ind It follows from standard arguments (e.g. monotone class theorem) that for any bounded, measurable $g: \Xmc_1\times \R^d\to\R$, $\int_{\Xmc_1\times\R^d} g(x_1,x_4)f_{14}(x_1,x_4)\,dx_1\,dx_4 = \ex{g(X_1,X_4)}$ concluding the proof that $f_{14}$ is the density of $(X_1,X_4)$.

\section{Conditional Propagation of Chaos}
\label{sec::condpropchaos}
We review the conditional propagation of chaos result proven in \cite{GanSpiSus25}, which is of independent interest and is the main tool used in the proof of our main results. We begin with a classical descriptor of propagation of chaos:

\begin{proposition}[Propagation of Chaos]
\label{prop::convres}
Let $\{X^n_{1:n}\}_{n \in \N}$ be a triangular array of $\Xmc$-random elements such that for all $n \in \N$, $(X^n_i)_{i=1}^n$ is exchangeable. Suppose $\eta \defeq \law(X)\in \P(\Xmc)$ is a deterministic probability measure. Then,
\[\eta_X \defeq \frac{1}{n}\sum_{i=1}^n \delta_{X^n_i} \to\eta \te{ in probability}\]
if and only if for any $k \in \N$,
\[(X^n_1,\dots,X^n_k) \Rightarrow (X\pcar{1},\dots,X\pcar{k}),\]
where $X\pcar{1},\dots,X\pcar{k}$ are i.i.d. copies of $X$. Moreover, the ``if'' statement of this lemma holds even when $k = 2$.
\end{proposition}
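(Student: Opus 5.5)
We need to prove the classical propagation of chaos equivalence (Sznitman, Prop 2.2). My plan is to work throughout with the empirical measure $\eta^n\defeq\frac1n\sum_{i=1}^n\delta_{X^n_i}$, a random element of $\P(\Xmc)$, and to reduce both directions to statements about integrals $\langle\eta^n,\phi\rangle$ for test functions $\phi\in C_b(\Xmc)$.

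\textbf{``Only if'' direction.} Assume $\eta^n\to\eta$ in probability and fix $k$. It suffices to test weak convergence of $(X^n_1,\dots,X^n_k)$ against product functions $\phi_1\otimes\cdots\otimes\phi_k$ with $\phi_i\in C_b(\Xmc)$; these form a convergence-determining class on the product of Polish spaces. By exchangeability,
\[\ex{\prod_{i=1}^k\phi_i(X^n_i)}=\frac{1}{n(n-1)\cdots(n-k+1)}\sum_{i_1,\dots,i_k\ \text{distinct}}\ex{\prod_{l}\phi_l(X^n_{i_l})}.\]
This differs from $\ex{\prod_l\langle\eta^n,\phi_l\rangle}$ by $O(k^2/n)\prod_l\|\phi_l\|_\infty$, the contribution of index tuples with repetitions. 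Since $\eta\mapsto\prod_l\langle\eta,\phi_l\rangle$ is bounded and continuous, convergence in probability gives $\ex{\prod_l\langle\eta^n,\phi_l\rangle}\to\prod_l\langle\eta,\phi_l\rangle$, which is exactly the corresponding expectation under i.i.d. copies of $X$.

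\textbf{``If'' direction, with $k=2$ only.} Assume $(X^n_1,X^n_2)\Rightarrow(X\pcar{1},X\pcar{2})$ i.i.d. For $\phi\in C_b(\Xmc)$, expand
\[\ex{(\langle\eta^n,\phi\rangle-\langle\eta,\phi\rangle)^2}\]
using exchangeability:
\[\ex{\langle\eta^n,\phi\rangle^2}=\tfrac1n\ex{\phi(X^n_1)^2}+\tfrac{n-1}{n}\ex{\phi(X^n_1)\phi(X^n_2)}\to\langle\eta,\phi\rangle^2,\]
while $\ex{\langle\eta^n,\phi\rangle}=\ex{\phi(X^n_1)}\to\langle\eta,\phi\rangle$. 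Hence the second moment tends to $0$, so $\langle\eta^n,\phi\rangle\to\langle\eta,\phi\rangle$ in probability for every fixed $\phi$.

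\textbf{Main obstacle: upgrading to convergence in $\P(\Xmc)$.} Convergence in probability for each fixed $\phi$ does not by itself give convergence in $\P(\Xmc)$, and this step needs care. I would handle it in two parts.
\begin{itemize}
\item First, $\ex{\eta^n}=\law(X^n_1)\Rightarrow\eta$ makes $\{\law(X^n_1)\}$ tight. By the standard criterion that a family of random measures is tight iff its intensity measures are tight, $\{\law(\eta^n)\}$ is tight in $\P(\P(\Xmc))$.
\item Second, take any subsequential limit $\Pi$ of $\law(\eta^n)$. For every $\phi$ in a countable convergence-determining family $\{\phi_j\}\subset C_b(\Xmc)$ (which exists since $\Xmc$ is Polish), continuity of $\nu\mapsto\langle\nu,\phi_j\rangle$ together with the $L^2$ limit forces $\langle\nu,\phi_j\rangle=\langle\eta,\phi_j\rangle$ for $\Pi$-a.e. $\nu$. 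Hence $\Pi=\delta_\eta$.
\end{itemize}
Convergence in law to a constant is convergence in probability, which gives $\eta^n\to\eta$. Alternatively, one could metrize weak convergence by $d(\mu,\nu)=\sum_j2^{-j}|\langle\mu-\nu,\phi_j\rangle|\wedge1$ with such a countable family and conclude by dominated convergence; the tightness route is cleaner since that metric need not metrize weak convergence without tightness.
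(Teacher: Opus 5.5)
Your proof is correct and is essentially the standard argument behind \cite[Proposition 2.2(i)]{Szn91}, which the paper cites rather than reproves: the $O(k^2/n)$ exchangeability estimate for the ``only if'' direction, the second-moment computation that uses only $k=2$, and tightness of the empirical measures obtained from tightness of their intensity $\law(X^n_1)$. One minor aside: if $\{\phi_j\}$ is genuinely convergence-determining (such countable families exist on Polish spaces, e.g.\ a dense subset of the functions uniformly continuous for a totally bounded compatible metric), then your metric $d$ does metrize weak convergence, so your closing caveat is off, though this does not affect your main argument.
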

\begin{proof}
This is just a restatement of \cite[Proposition 2.2(i)]{Szn91} in our notation.
\end{proof}

As mentioned in Remark \ref{rmk::munaitdefcpropchaos}, this describes \eqref{eqn::muntdef} of Theorem \ref{thm::hydro}. However, it is not sufficient to establish \eqref{eqn::munaitdef} of the same theorem. In this section, we mention the extension of  Proposition \ref{prop::convres} to a more general case, which was called \emph{conditional propagation of chaos} in \cite{GanSpiSus25}. Conditional propagation of chaos plays a key role in the proof of Theorem \ref{thm::Sampleconv}. We first recall some definitions, also from \cite{GanSpiSus25}.
\begin{definition}[Continuous Dependence]
\label{def::depcont}
Let $X$ and $Y$ be $\Xmc$ and $\Ymc$-random elements respectively. Then we say $Y$ \emph{depends continuously on} $X$ if there exists an $X$-almost surely continuous function $\phi: \Xmc \to \Ymc$ such that $\phi(X) = Y$ almost surely.
\end{definition}

\begin{definition}[Exchangeability Excluding $i$]
\label{def::exchexcli}
For any $n \in \N$ and $i \in [1:n]$, a $\Xmc$-random element $X$ and a collection of $n$ $\Ymc$-random elements $Y \defeq (Y_1,\dots,Y_n)$ are said to be \emph{exchangeable excluding} $i$ if for every permutation $\sigma \in S_n$,
\[\left(X,(Y_j)_{j=1}^n\right) \deq \left(X,(Y_{\sigma(j)})_{j=1}^n\right)\te{ when for } j=i, \sigma(i) = i.\] 
\end{definition}

It is convenient to combine Definitions \ref{def::depcont} and \ref{def::exchexcli} into a single property.

\begin{definition}[$\Xmc/\Ymc$-Convenience]
\label{def::usefulassu}
Let $\mathbf{X} \defeq \{X^n\}_{n \in \N}\cup \{X\}$ be a sequence of $\Xmc$-valued random elements and let $\mathbf{Y} \defeq \{(Y^n_i)_{i\in [1:n]}\}_{n \in \N}\cup\{Y\}$ be a collection of random vectors with entries in $\Ymc$. Then $(\mathbf{X},\mathbf{Y})$ is said to be $\Xmc/\Ymc$-\emph{convenient} if for each $n \in \N$, $(X^n,Y^n)$ is exchangeable excluding 1 and $\eta_{XY} \defeq \law(X,Y|X)$ depends continuously on $X$.
\end{definition}

We now state the conditional propagation of chaos result, which we was introduced and proven in \cite{GanSpiSus25}.

\begin{proposition}[Conditional Propagation of Chaos]
\label{prop::condconvres}
Suppose that $\Xmc$ and $\Ymc$ are locally compact. Let $(\mathbf{X},\mathbf{Y})$ be an $\Xmc/\Ymc$-convenient collection of random elements. Then
\begin{align}
\label{eqn::muXYiconv}
(X^n,\eta^n_{XY})&\defeq \left(X^n,\frac{1}{n}\sum_{j=1}^n \delta_{X^n,Y^n_j}\right) \Rightarrow (X,\eta_{XY}),
\end{align}
if and only if for every $k \in \N\setminus \{1,2\}$,
\begin{equation}
\label{eqn::XXXYYconv}
(X^n,Y^n_{2:k}) \Rightarrow (X,Y\pcar{2:k}),
\end{equation}
where
\begin{itemize}
\item $\law(X,Y\pcar{j}|X) = \eta_{XY}$ for all $j$,
\item and $(Y\pcar{j})_{j=2}^k$ are conditionally independent given $X$.\end{itemize}
In addition, \eqref{eqn::XXXYYconv} implies \eqref{eqn::muXYiconv} when $k = 3$.
\end{proposition}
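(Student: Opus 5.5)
The plan is to reduce the conditional statement to an unconditional propagation-of-chaos statement on the enlarged space $\Xmc\times\Ymc$, working conditionally on $X$ through test functions of product form. I would use the topology of weak convergence on $\P(\Xmc\times\Ymc)$ and rely on local compactness: $C_0(\Xmc\times\Ymc)$ is separable and contains a countable convergence-determining family of products $g(x)h(y)$ with $g\in C_0(\Xmc)$, $h\in C_0(\Ymc)$. Joint convergence of $(X^n,\eta^n_{XY})$ then amounts to joint convergence of $X^n$ together with the countable family of real random variables $\langle\eta^n_{XY},gh\rangle=g(X^n)\frac1n\sum_j h(Y^n_j)$. Tightness of $\eta^n_{XY}$ would follow from tightness of its intensity measure $\law(X^n,Y^n_2)$, and this holds whenever $(X^n,Y^n_2)$ converges.

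For \eqref{eqn::XXXYYconv} $\Rightarrow$ \eqref{eqn::muXYiconv} with $k=3$, I would compute second moments by exchangeability excluding 1. Fix $\phi\in C_b(\Xmc)$ and $g,h$ as above. Then
\[\ex{\phi(X^n)\Big(g(X^n)\tfrac1n\textstyle\sum_j h(Y^n_j)-\langle\eta_{XY},gh\rangle(X^n)\Big)^2}\]
involves only the laws of $(X^n,Y^n_2)$ and $(X^n,Y^n_2,Y^n_3)$, up to $O(1/n)$ from the $j=1$ and diagonal terms. The difficulty is the centering $\langle\eta_{XY},gh\rangle(X^n)$, since it is only an $X$-a.s. continuous function of $X^n$. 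This is exactly why continuous dependence of $\eta_{XY}$ on $X$ is assumed: $x\mapsto\int h\,d\eta_{XY}(x)$ is $X$-a.s. continuous, so the continuous mapping theorem gives convergence of the expectation above to the analogous limit expression. That limit vanishes because $Y\pcar{2},Y\pcar{3}$ are conditionally i.i.d. given $X$ with law $\eta_{XY}$. Hence $\langle\eta^n_{XY},gh\rangle-\langle\eta_{XY},gh\rangle(X^n)\to0$ in $L^2$, and combined with $X^n\Rightarrow X$ and the a.s. continuity once more, this gives \eqref{eqn::muXYiconv}.

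For the converse, I would start from $(X^n,\eta^n_{XY})\Rightarrow(X,\eta_{XY})$ and compute $\ex{\phi(X^n)\prod_{j=2}^k h_j(Y^n_j)}$. By exchangeability excluding 1 this equals $\ex{\phi(X^n)\prod_{j}\langle\eta^n,h_j\rangle}+O(k^2/n)$, using the standard sampling-without-replacement comparison from \cite{Szn91}. The right side is a bounded continuous functional of $(X^n,\eta^n_{XY})$, so it converges to $\ex{\phi(X)\prod_j\int h_j\,d\eta_{XY}(X)}$. This is the law of $(X,Y\pcar{2:k})$ with the $Y\pcar{j}$ conditionally i.i.d. given $X$ with law $\eta_{XY}$, and product test functions determine that law.

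I expect the main obstacle to be the forward direction, specifically handling the fact that the centering functional is only almost-surely continuous while the prelimit $X^n$ need not avoid the discontinuity set. I would deal with this by combining the continuous mapping theorem for a.s.-continuous functions with uniform boundedness of all integrands, which makes convergence of expectations legitimate. Beyond this, the argument is routine bookkeeping with the diagonal terms.
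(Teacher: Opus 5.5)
Your proof is correct. The paper gives no proof of this proposition; it is restated from \cite[Proposition 5.5]{GanSpiSus25}. The statement is the conditional analogue of \cite[Proposition 2.2(i)]{Szn91}, with ``$k=3$ suffices'' here matching ``$k=2$ suffices'' there, and your argument is the conditional version of Sznitman's proof.

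For the ``if'' direction you use a second-moment estimate of $\langle\eta^n_{XY},gh\rangle$ around the random centering $\int gh\,d\Phi(X^n)$, where $\Phi$ is the $X$-a.s.\ continuous map with $\Phi(X)=\eta_{XY}$. This needs only the law of $(X^n,Y^n_2,Y^n_3)$, and the a.s.\ continuity of $\Phi$ enters only through the continuous mapping theorem for bounded, a.s.-continuous integrands. For the converse you use the $O(k^2/n)$ comparison between sampling with and without replacement.

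Two routine points should be stated explicitly when you write it up.
\begin{itemize}
\item In the converse, convergence of $\ex{\phi(X^n)\prod_j h_j(Y^n_j)}$ over $C_b$ factors gives convergence in distribution for two reasons. It already contains convergence of each marginal, hence tightness. Products of $C_b$ functions also separate laws on $\Xmc\times\Ymc^{k-1}$.
\item In the forward direction, identifying the limit uses that the span of your countable family of products is dense in $C_0(\Xmc\times\Ymc)$, by Stone--Weierstrass. Hence $\mu\mapsto(\langle\mu,g_ih_i\rangle)_i$ is injective.
\end{itemize}

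Local compactness makes the second point stronger. Sub-probability measures on $\Xmc\times\Ymc$ are then vaguely compact, and the vague and weak topologies coincide on $\P(\Xmc\times\Ymc)$. So convergence of $(X^n,(\langle\eta^n_{XY},g_ih_i\rangle)_i)$ already yields $(X^n,\eta^n_{XY})\Rightarrow(X,\eta_{XY})$. Your separate tightness argument via the intensity measure is correct but not needed.
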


\begin{definition}[Conditional Propagation of Chaos]
\label{CPC}
We say a pair of random variable collections $(\mathbf{X},\mathbf{Y})$ satisfies the $\Xmc/\Ymc$-\emph{conditional propagation of chaos} property if $(\mathbf{X},\mathbf{Y})$ is $\Xmc/\Ymc$-convenient and satisfies \eqref{eqn::muXYiconv}.
\end{definition}

\bibliographystyle{plain}
\bibliography{reference}
\end{document}